\newtheorem{theorem}{Theorem}[section]
\newtheorem{proposition}{Proposition}[section]
\newtheorem{corollary}{Corollary}
\newtheorem{lemma}{Lemma}[section]
\theoremstyle{definition}
\newtheorem{definition}{Definition}
\newtheorem{remark}{Remark}
\numberwithin{equation}{section}
\DeclareMathOperator{\R}{\mathbb{R}}
\DeclareMathOperator{\M}{\mathcal{M}}
\let\T\relax
\DeclareMathOperator{\T}{\widetilde T}
\DeclareMathOperator{\embeds}{\hookrightarrow}
\DeclareMathOperator{\weakstar}{\overset{\ast}{\rightharpoonup}}
\DeclareMathOperator*{\esssup}{ess\,sup}
\DeclareMathOperator*{\essinf}{ess\,inf}
\DeclareMathOperator*{\supp}{supp}
\DeclareMathOperator{\eps}{\varepsilon}
\newcommand{\set}[1]{\left\{ #1 \right\}}
\title[Regularity for isentropic Euler]{Unconditional regularity and trace results for the isentropic Euler equations with $\gamma = 3$} 
\author[Golding]{William Golding}
\address[William Golding]{\newline Department of Mathematics, \newline The University of Texas at Austin, Austin, TX 78712, USA}
\email{wgolding@utexas.edu}
\date{\today}
\subjclass[2010]{35B44, 35B65, 35L65}
\keywords{Compressible Euler system, Blow-up method, De Giorgi method, Unconditional regularity, Uniqueness, Stability, Conservation law}
\thanks{\textbf{Acknowledgment.} 
The author would like to thank his advisor, Alexis Vasseur, for suggesting this problem, for fruitful and insightful conversations, for continued encouragement, and for being a superb mentor. W. Golding is partially supported by NSF grant: DMS1840314.
}
\begin{document}

\begin{abstract}
In this paper, we study the regularity properties of bounded entropy solutions to the isentropic Euler equations with $\gamma = 3$. First, we use a blow-up technique to obtain a new trace theorem for all such solutions. Second, we use a modified De Giorgi type iteration on the kinetic formulation to show a new partial regularity result on the Riemann invariants. We are able to conclude that in fact for any bounded entropy solution $u$, the density $\rho$ is almost everywhere upper semicontinuous away from vacuum. To our knowledge, this is the first example of a nonlinear hyperbolic system, which fails to be Temple class, but has the property that generic $L^\infty$ initial data give rise to bounded entropy solutions with a form of near classical regularity. This provides one example that $2\times 2$ hyperbolic systems can possess some of the more striking regularizing effects known to hold generically in the genuinely nonlinear, multidimensional scalar setting. While we are not able to use our regularity results to show unconditional uniqueness, the results substantially lower the likelihood that current methods of convex integration can be used in this setting.

\end{abstract}

\maketitle

\tableofcontents

\section{Introduction}\label{sec:intro}

In this paper, we study the properties of bounded entropy solutions $u = (\rho, m)$ to the isentropic Euler equations with adiabatic index $\gamma = 3$: 
\begin{equation}\label{eqn:Euler}
\begin{cases}
\rho_t + m_x = 0\\
m_t + \left(\frac{m^2}{\rho} + \frac{\rho^3}{12}\right)_x = 0,
\end{cases}
\end{equation}
where $(t,x)\in \R^+ \times \R$ are time and space, respectively, and $(\rho,m)$ are the unknown mass and momentum densities. We consider solutions $(\rho,m)$ which satisfy \eqref{eqn:Euler} in the sense of distributions and are entropic for \emph{all} entropy, entropy-flux pairs. More precisely, we require that for any pair $(\eta, q)$, such that $\eta$ is convex, $\eta,\ q \in C(\R^+\times \R)$, and $q^\prime = \eta^\prime f^\prime$, where $f(\rho,m) = (m, m^2/\rho + \rho^3/12)^t$ is the flux associated to \eqref{eqn:Euler}, $u$ satisfies the entropy inequality,
\begin{equation}\label{eqn:entropy}
\eta(u)_t + q(u)_x \le 0,
\end{equation}
in the sense of distributions. Throughout, we will work with solutions which are globally bounded, in the sense that $\rho, \frac{m}{\rho}\in L^\infty(\R^+ \times \R)$, so that the physical quantities are bounded. We recall that the global-in-time existence of such solutions to \eqref{eqn:Euler} for general initial datum $u_0\in L^\infty$ is known provided $\rho_0,\frac{m_0}{\rho_0}\in L^\infty$ and $\rho_0 \ge 0$ (see \cite[Theorem 17.8.1]{dafermos_book}). We comment that for \eqref{eqn:Euler} existence follows from the now classical method of compensated compactness introduced by Tartar and Murat. We refer the reader to \cite[Section 17.9]{dafermos_book} for a detailed history of the existence theory for $2\times 2$ systems.

\medskip

However, for more regular initial data $u_0$, there are several methods of constructing solutions, of which we list three:
\begin{itemize}
\item Glimm's scheme (Random choice)
\item The front tracking method
\item The method of vanishing viscosity
\end{itemize}
Each of these three methods of constructing solutions is known to give rise to the same entropy solution $u$, which we will refer to as Glimm's solution, for a quite general class of fluxes, provided $\|u_0\|_{BV}$ is sufficiently small. 
Yet, even for $\|u_0\|_{BV} < \eps$, it is open whether the Glimm entropy solution is the only one obtained by compensated compactness methods. We remark that for Temple class systems, which are in some sense degenerate, this is more easily resolved (see \cite{dafermos_geng, bressan_goatin2, heibig} and references therein). However, to our knowledge, this problem remains wide open for every non-Temple class $2\times 2$ system. This is highly unsatisfactory because, at least heuristically, fully discretized numerical schemes (i.e. widely used finite volume and finite difference based methods) are naturally represented as measure-valued solutions and converge to compensated compactness solutions. For a more complete discussion of these results, we direct the reader to the celebrated paper of Bianchini and Bressan \cite{bianchini_bressan} and the references therein.

\medskip

This motivates our study of solutions satisfying only the a priori bounds $\rho, \frac{m}{\rho} \in L^\infty(\R^+ \times \R)$, which are the strongest a priori bounds naturally guaranteed for reasonable compensated compactness schemes via the existence of invariant regions. 
We emphasize again that none of the results below make any additional regularity assumptions such as $BV_{loc}$ nor any smallness assumptions and are in this sense truly unconditional. 
As a first step towards resolving the aforementioned conjecture for the specific system \eqref{eqn:Euler}, we study two closely related regularity properties for bounded solutions, namely, trace properties and local $BV$-like structure.

\subsection{Trace Properties}

In the case of scalar conservation laws with a convex flux, it is known the Kru\v{z}kov entropy solution regularizes from $L^\infty$ into $BV$ \cite{kruzkov,oleinik}. However, for multidimensional scalar conservation laws, it is well known that solutions may not be $BV$, but still display striking regularization properties (see \cite{lions_perthame_tadmor1,panov,kwon_vasseur,vasseur_trace,chen_frid,chen_rascle,delellis_otto_westdickenberg,silvestre} and references therein). One of the earliest examples of this phenomenon is the fractional Sobolev regularization implied by the kinetic formulation introduced by Lions, Perthame, and Tadmor in \cite{lions_perthame_tadmor1}. Thereafter, the kinetic formulation was used by Vasseur in \cite{vasseur_trace} to show a one-sided trace property for solutions, which was later strengthened in \cite{panov, kwon_vasseur}. In particular, this implies that solutions have well-defined boundary values along lower dimensional hyper-surfaces, and these boundary values are obtained in a strong topology. 

\medskip

On the other hand, for $2\times 2$ systems, not only is regularization unknown for a generic, non-Temple-class system, but we lack even specific examples of systems known to possess regularizing effects, excepting the fractional Sobolev regularization for \eqref{eqn:Euler} implied by the kinetic formulation introduced by Lions, Perthame, and Tadmor in \cite{lions_perthame_tadmor2}. We are aware of exactly one trace theorem: in \cite{vasseur_kinetic}, Vasseur shows any bounded solution $u$ to \eqref{eqn:Euler} satisfies $u\in C(0,T;L^1_{loc})$, which we interpret as a two-sided trace property in time. In light of the increasingly evident role spatial traces play in the uniqueness and stability theory for systems (see the discussion of uniform traces below), the first main goal of this paper is to extend the temporal traces considered by Vasseur in \cite{vasseur_kinetic} to $L^1$-type spatial traces as done in the scalar case in \cite{vasseur_trace}. 

\medskip

In order to state our result, we introduce the following particular notion of a one-sided trace, used frequently below.
\begin{definition}[Strong Trace Property]\label{defn:strong_traces}
We say a function $u:\R^+ \times \R \rightarrow \R^n$, $u\in L^\infty(\R^+\times \R)$ satisfies the strong trace property if for any $h: [0,T] \rightarrow \R$ Lipschitz, there are trace functions $u^+,u^-\in L^\infty([0,T];\R^n)$ that are obtained as
\begin{equation}
\lim_{k\rightarrow \infty} \esssup_{y \in (0,1/k)} \int_0^T |u(t,h(t) + y) - u^+(t)| + |u(t,h(t) - y) - u^-(t)| \ dt = 0. 
\end{equation}
\end{definition}

\begin{remark}\label{rmk:trace2}
Of course, as $u,\ u^+,\ \text{and}\ u^-$ are all bounded, one may interpolate with the $L^\infty$ bound and put a power $p$ inside the trace condition as in Definition \ref{defn:strong_traces} for any $1 \le p < \infty$.
\end{remark}

With this definition in hand, we are ready to state our first main result:
\begin{theorem}\label{thm:trace}
Suppose $u:\R^+ \times \R \rightarrow \R^2$ is a bounded entropy solution to \eqref{eqn:Euler}. Then, $u$ has strong traces in the sense of Definition \ref{defn:strong_traces}.
\end{theorem}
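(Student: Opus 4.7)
My plan is to follow a blow-up strategy along the Lipschitz curve $x = h(t)$, combined with the Lions--Perthame--Tadmor kinetic formulation for \eqref{eqn:Euler} and the Riemann-invariant structure that is special to $\gamma = 3$. I will prove existence of the right trace $v^+(t)$; the left trace is analogous.

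First, I would straighten the curve by setting $y := x - h(t)$ and $v(t,y) := u(t, h(t)+y)$. Because $h$ is Lipschitz, $v$ is bounded and satisfies, in the sense of distributions,
\[
\partial_t v + \partial_y\bigl(f(v) - h'(t)\,v\bigr) = 0,
\]
together with the entropy dissipation $\partial_t \eta(v) + \partial_y(q(v) - h'(t)\eta(v)) \le 0$ for every entropy pair $(\eta,q)$ of \eqref{eqn:Euler}. Next, for $\lambda > 0$, I would introduce the rescaled family $v_\lambda(t,y) := v(t, \lambda y)$ on $(0,T)\times(0,1)$, which obeys
\[
\lambda\,\partial_t v_\lambda + \partial_y\bigl(f(v_\lambda) - h'(t)\,v_\lambda\bigr) = 0
\]
with correspondingly rescaled entropy inequalities. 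The kinetic formulation for \eqref{eqn:Euler} endows $v$ with uniform fractional Sobolev regularity via velocity averaging, and this transfers to $\{v_\lambda\}$ uniformly in $\lambda$, yielding precompactness in $L^1_{loc}$. Every subsequential limit $v_\infty$ then solves the \emph{stationary} system
\[
\partial_y\bigl(f(v_\infty) - h'(t)\,v_\infty\bigr) = 0, \qquad \partial_y\bigl(q(v_\infty) - h'(t)\eta(v_\infty)\bigr) \le 0,
\]
i.e., $v_\infty(t,\cdot)$ is a stationary entropy solution of the Euler system in the frame moving at speed $h'(t)$.

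The heart of the argument is the rigidity step: showing $v_\infty(t,y) = v^+(t)$ is independent of $y$. At $\gamma = 3$, the Riemann invariants $w = m/\rho + \rho/2$ and $z = m/\rho - \rho/2$ coincide with the characteristic speeds $\lambda_\pm$, yielding a complete decoupling at the level of characteristic fields. I would exploit this at the level of the kinetic formulation restricted to the stationary profile, applying a scalar-type rigidity argument separately to $w_\infty$ and $z_\infty$: Lax-type admissibility of stationary jumps in the $h'(t)$-moving frame, combined with genuine nonlinearity of each Burgers-like characteristic field, forces $\partial_y w_\infty = \partial_y z_\infty = 0$. Uniqueness of the limit then upgrades subsequential to full strong convergence $v_\lambda \to v^+$ in $L^1_{loc}$, and undoing the change of variable delivers the trace required by Definition~\ref{defn:strong_traces}.

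The main obstacle is precisely this rigidity step. For scalar conservation laws, the analogous rigidity has been developed via the kinetic formulation (see \cite{vasseur_trace} and references therein) and genuinely uses genuine nonlinearity of the flux. Extending these ideas to the $2\times 2$ system \eqref{eqn:Euler} requires managing the entropy-dissipation measure jointly for both Riemann invariants, and while the decoupling of characteristics at $\gamma = 3$ makes a scalar-style argument viable, carrying it out rigorously at the level of the kinetic formulation---while accounting for the $h'(t)$-dependent Lagrangian correction and the possibility of vacuum on the stationary profile---constitutes the bulk of the technical effort.
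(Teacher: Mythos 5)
Your proposal shares the paper's high-level architecture (straighten the curve, blow up, gain compactness via kinetic averaging, then a rigidity step), and correctly isolates the rigidity step as the crux. But there are three genuine gaps, one of which is conceptual rather than merely technical.

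\textbf{The rescaling is one-directional, so the dissipation does not die.} You rescale only in $y$, obtaining a degenerate limit $\lambda\partial_t v_\lambda + \partial_y(f(v_\lambda)-h'(t)v_\lambda)=0$ that formally limits to a stationary problem \emph{with} nontrivial entropy dissipation. The paper instead blows up isotropically in $(\tau,y)$ around individual points $(t,h(t))$, $f_\eta(\tau,y,v)=f(t+\eta\tau,\,h(t+\eta\tau)+\eta y,\,v)$, and proves (Lemma~\ref{lem:trace3}) via a Fubini argument that the rescaled measure $\mu_{\eta_n}(D^\pm_{\T})\to 0$ along a subsequence for a.e.\ $t$. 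The limit then solves a free transport equation in $(\tau,y,v)$, and the rigidity step becomes nothing more than propagating boundary data along characteristics. In your setup the dissipation does not vanish, so you need a genuinely new rigidity statement for stationary entropy solutions of the full $2\times 2$ system on a half-line---a much harder object than the paper actually confronts.

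\textbf{The Riemann-invariant decoupling argument does not survive weak solutions.} Your rigidity mechanism---treat $w$ and $z$ as independent Burgers-type scalar quantities and apply scalar rigidity to each---fails because the decoupling holds only for smooth solutions. At shocks the two Riemann invariants are coupled through the Rankine--Hugoniot conditions, or equivalently through the entropy dissipation measure $\mu$ in the kinetic formulation; this coupling is precisely the obstruction a $2\times 2$ system presents beyond the scalar case. The paper bypasses it entirely by working at the level of the kinetic function $f=\chi_{[\lambda_1,\lambda_2]}(v)$ and arranging (by the isotropic blow-up) that $\mu\to 0$, so no scalar-decoupling assertion is ever needed. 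As stated, your rigidity step is not fillable without a new idea replacing the decoupling claim.

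\textbf{The weak trace is missing, so uniqueness of the limit has no anchor.} You invoke ``uniqueness of the limit'' to upgrade subsequential to full convergence, but this presupposes an externally identified candidate trace. The paper supplies exactly this in Step~1 (Lemma~\ref{lem:trace1}, Proposition~\ref{prop:trace1}): the quantity $y\mapsto\int (v-\dot h)f(t,h(t)+y,v)\Phi\,dtdv$ is $BV$ in $y$, producing distributional one-sided traces $f^\pm(t,v)$, which are then shown in Step~3 to be the boundary values of the blow-up limit. Without this prior construction, different subsequences of $\lambda\to 0$ could a priori give different stationary profiles. You should not expect to avoid some form of this weak-trace step; it is what turns the rigidity lemma into a statement about the full family.

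In short: the blueprint is right, but the paper's isotropic blow-up that kills $\mu$ is the key move, and your alternative route via stationary Riemann-invariant rigidity has a conceptual hole at exactly the point you flagged as ``the bulk of the technical effort.''
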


\begin{remark}\label{rmk:trace1}
Since strong traces are obtained strongly in an $L^1$ sense, nonlinear functions of a solution $u$ to \eqref{eqn:Euler} also have strong traces. For example, we have enough regularity to make sense of the characteristic fields $\lambda_i(u^\pm)$ as well the Rankine-Hugoniot condition.
\end{remark}

Combining Theorem \ref{thm:trace} with a result of \cite{leger_vasseur}, we obtain the following corollary:
\begin{corollary}\label{cor:Dafermos}
Suppose $u:\R^+ \times \R \rightarrow \R^2$ is a bounded entropy solution to \eqref{eqn:Euler}. Then, for any $h: [0,T] \rightarrow \R$ a Lipschitz curve, for almost every $t\in [0,T]$, either $u^+(t) = u^-(t)$ or $(u^-(t), u^+(t))$ is an admissible shock with speed $\dot{h}(t)$.
\end{corollary}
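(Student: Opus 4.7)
The plan is to deduce the corollary as a direct combination of Theorem \ref{thm:trace} with the classification result of \cite{leger_vasseur}. The substantive input is Theorem \ref{thm:trace}, which provides one-sided traces $u^\pm:[0,T]\to \R^2$ along the Lipschitz curve $h$ in the strong $L^1$ sense of Definition \ref{defn:strong_traces}. What remains is to translate the distributional identity \eqref{eqn:Euler} and entropy inequality \eqref{eqn:entropy} into pointwise (a.e. $t$) conditions on the pair $(u^-(t),u^+(t))$ and then invoke the known classification.

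To that end, I would derive, for a.e. $t\in [0,T]$, both the Rankine--Hugoniot relation
\[
\dot h(t)\bigl(u^+(t)-u^-(t)\bigr) = f(u^+(t))-f(u^-(t))
\]
and the Lax-type entropy jump inequality
\[
q(u^+(t))-q(u^-(t)) - \dot h(t)\bigl(\eta(u^+(t))-\eta(u^-(t))\bigr) \le 0
\]
for every convex entropy--entropy flux pair $(\eta,q)$. The standard device is to test \eqref{eqn:Euler} and \eqref{eqn:entropy} against smooth approximations of the indicator of a thin tube $\{|x-h(t)|<1/k,\ t\in I\}$ for arbitrary subintervals $I\subset [0,T]$, perform the change of variables $y = x - h(t)$, and pass to the limit $k\to \infty$ using the strong $L^1$ convergence of the traces supplied by Theorem \ref{thm:trace}. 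A Lebesgue differentiation argument in $t$, combined with the selection of a countable dense family of convex entropies, then produces a single full-measure set of $t$ on which both relations hold for every admissible entropy pair simultaneously.

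Finally, the conclusion follows from the classification result of \cite{leger_vasseur}, which asserts that any pair of states $(u^-,u^+)$ satisfying Rankine--Hugoniot at speed $\dot h(t)$ together with the above entropy inequality for all convex entropy pairs must either satisfy $u^- = u^+$ or correspond to an admissible shock of \eqref{eqn:Euler} with speed $\dot h(t)$. Since \eqref{eqn:Euler} admits the rich family of convex entropies generated by its kinetic formulation, this classification applies directly. The main obstacle is not conceptual but technical: arranging a single $t$-null set independent of the entropy pair, which is handled via the separability of the cone of admissible entropies. Once this is done, no further computation is required beyond the application of Theorem \ref{thm:trace} and the direct citation of \cite{leger_vasseur}.
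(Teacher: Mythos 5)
Your proposal is correct and takes essentially the same route as the paper's proof in the appendix: test the weak form of \eqref{eqn:Euler} and \eqref{eqn:entropy} against a smooth cutoff localized in a thin tube around the curve $x = h(t)$, change variables, invoke the strong traces from Theorem \ref{thm:trace} to pass to the limit, obtain the Rankine--Hugoniot relation and entropy jump inequality, and conclude. The only cosmetic difference is that you frame the final step as citing a ``classification result'' of \cite{leger_vasseur}, whereas once Rankine--Hugoniot and the entropy inequality for all convex entropy pairs are in hand, the conclusion is essentially the definition of an admissible shock; your observation about selecting a countable dense family of entropies to get a single null set is a legitimate technical point that the paper handles somewhat more casually.
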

This corollary is interesting because it translates the a priori regularity provided by Theorem \ref{thm:trace} into a tool that can be used unconditionally in the study of solutions to \eqref{eqn:Euler}. The proof of the corollary follows immediately from \cite[Lemma 6]{leger_vasseur}, which is proved in the Appendix for the reader's convenience.

\medskip

Let us introduce one more trace property, which we refer to as the uniform trace property to distinguish it from strong traces. The following property is never used below, but is introduced formally to provide motivation for our later results.
\begin{definition}[Uniform Trace Property]\label{defn:uniform_traces}
We say a function $u:\R^+ \times \R \rightarrow \R^n$, $u\in L^\infty(\R^+\times \R)$ satisfies the uniform trace property if for any $h: [0,T] \rightarrow \R$ Lipschitz, there are trace functions $u^+,u^-\in L^\infty([0,T];\R^n)$ that are obtained as
\begin{equation}
\lim_{k\rightarrow \infty} \int_0^T \esssup_{y \in (0,1/k)} |u(t,h(t) + y) - u^+(t)| + |u(t,h(t) - y) - u^-(t)| \ dt = 0. 
\end{equation}
\end{definition}
The uniform trace property has been present for some time in the theory of $a$-contractions with shifts originating in the work of Leger \cite{leger} (for scalar) and Leger and Vasseur \cite{leger_vasseur} (for systems). Recently, it has reappeared as the most general class under which Glimm's solutions remain unique. More precisely, \cite[Theorem 1.3]{chen_krupa_vasseur}\textemdash proved in \cite{chen_krupa_vasseur} and the companion paper \cite{golding_krupa_vasseur}\textemdash states that for any small $BV$ initial datum $u_0$, if $u$ and $v$ are bounded entropy solutions (to a class of generic $2\times 2$ systems) with the same initial datum $u_0$ and $u$ and $v$ satisfy the uniform trace property in the sense of Definition \ref{defn:uniform_traces}, then $u = v$. This theorem improves several preceding uniqueness results (\cite{bressan_lewicka,bressan_lefloche,bressan_goatin}), yet still only provides conditional uniqueness for small $BV$ initial datum, because we do not know whether the uniform trace property holds for general $L^\infty$ solutions. This leads to two natural questions which motivate the remainder of our results: First, does special structure of \eqref{eqn:Euler} let us prove uniform traces for entropy solutions? Second, can we make the uniqueness theorem work using only strong traces? We are only able to provide partial answers to both of these questions.

\subsection{Improved Traces}

We seek first to improve the regularity results of Theorem \ref{thm:trace} and bridge the gap between strong and uniform traces in the sense of Definitions \ref{defn:strong_traces} and \ref{defn:uniform_traces}. To state our main result in this direction, let us recall that for the $\gamma = 3$ isentropic Euler equations as written in \eqref{eqn:Euler}, the Riemann invariants are the characteristic fields, which are given by
\begin{equation}
\lambda_1(\rho,m) = \frac{m}{\rho} - \frac{\rho}{2} \qquad \text{and}\qquad \lambda_2(\rho,m) = \frac{m}{\rho} + \frac{\rho}{2}.
\end{equation}
Then, we are able to obtain the following improvement of the trace condition for $\rho$, $\lambda_1$, and $\lambda_2$.
\begin{theorem}\label{thm:improved_traces}
Suppose $u:\R^+ \times \R \rightarrow \R^2$ is a bounded entropy solution to \eqref{eqn:Euler} and further suppose $\essinf \rho > 0$. Then, for any Lipschitz curve $h:[0,T]\rightarrow \R$, for each of $g_1 = \lambda_2, \ \text{or} \ -\lambda_1$,
\begin{equation}\label{eqn:improved_traces1}
\lim_{n\rightarrow \infty} \int_0^T \esssup_{-\frac{1}{n} < y < \frac{1}{n}} \left[g_1(t,h(t) + y) - \max(g_1^+(t),g_1^-(t))\right]_+ \;dt = 0.
\end{equation}
Moreover, for any sequence $r_n\rightarrow 0^+$ and any $0 < \eps < 1$, for $g_2 = \rho,\ \lambda_2, \ \text{or} \ -\lambda_1$,
\begin{equation}\label{eqn:improved_traces2}
\lim_{n\rightarrow \infty} \int_0^T \esssup_{\eps r_n < y < r_n} \left(\left[g_2(t,h(t) + y) - g_2^+(t)\right]_+ + \left[g_2(t,h(t) - y) - g_2^-(t)\right]_+\right) \;dt = 0.
\end{equation}
\end{theorem}

\begin{remark}
We can easily recover \eqref{eqn:improved_traces1} for $\rho$ with $\max(g^+_1,g^-_1)$ replaced by $\max(\lambda_2^+,\lambda_2^-) - \min(\lambda_1^+,\lambda_1^-)$ using $\rho = \lambda_2 - \lambda_1$. We note that one can show using Corollary \ref{cor:Dafermos} and explicit computations along the (globally defined) shock curves of \eqref{eqn:Euler} that $\lambda_i^- = \max(\lambda_i^-,\lambda_i^+)$ almost everywhere along any Lipschitz curve $h$. However, this implies we cannot recover \eqref{eqn:improved_traces1} for $\rho$ with $\max(\rho^-,\rho^+)$ using our method, since $\max(\rho^-,\rho^+) < \lambda_2^- - \lambda_1^+$ may occur.
\end{remark}

Theorem \ref{thm:improved_traces} provides a significant improvement over Theorem \ref{thm:trace} in that we are able to pass the supremum in $x$ inside the integral and obtain traces that are quite close to the uniform traces of Definition \ref{defn:uniform_traces}. There appear to be three deficiencies with this method: First, we lose traces on the momentum $m$. Second, we only control the oscillation of $\rho$ and the Riemann invariants in one direction, either up or down. Third, we lose control of the oscillations near the curve, i.e. over the region $0 < y < \eps r_n$.

\medskip

In some ways, the first and second problem are actually one and the same. We only directly control the oscillation of $\lambda_1$ and $\lambda_2$ (in opposite directions) and recover control of $\rho$ via the algebraic relation $\rho = \lambda_2 - \lambda_1$. Thus, it seems unlikely that we can use the relation $m = \frac{\lambda_2^2 - \lambda_1^2}{2}$ to obtain control of $m$. We will explore this difficulty more below. 
By contrast, the third problem is not so bad. Indeed, for most practical applications traces of the form \eqref{eqn:improved_traces1}, which hold all the way up to the curve, are sufficient.

\medskip

Although we are not able to use Theorem \ref{thm:improved_traces} to obtain any significant unconditional uniqueness result for small $BV$ solutions or even small shocks to \eqref{eqn:Euler}, Theorem \ref{thm:improved_traces} still significantly restricts the  possibility of convex integration of \eqref{eqn:Euler}. In the $L^\infty$ framework described here, considering a $1d$ shock to \eqref{eqn:Euler} as a $2d$ planar shock and convex integrating in $2d$ yields solutions which fail to have traces in any strong topology (see \cite{chiodaroli_delellis_kreml} for more details). The method of convex integration typically uses large oscillations to create low regularity weak solutions which exhibit wild, unexpected behavior. Theorems \ref{thm:trace} and \ref{thm:improved_traces} can be interpreted as restricting the type of high frequency oscillations one can use in building a convex integration scheme so that one still obtains an entropy solution at the end of the process. That is, while Theorem \ref{thm:improved_traces} does not close the uniqueness gap, it provides further evidence to support the conjecture that (at least for small $BV$ initial data) there is only one bounded entropy solution to \eqref{eqn:Euler}.

\subsection{Semicontinuity of Solutions}

Our final result concerns a form of classical regularity of solutions to \eqref{eqn:Euler}. Due to the presence of genuine nonlinearity, solutions are expected to be generically piecewise smooth, with jumps occurring only at entropic shock discontinuities.
In the context of multidimensional scalar equations, where solutions need not regularize to $BV$, there have been several attempts to characterize the regularizing effects of genuine nonlinearity by showing $BV$-like properties for solutions, which is a convenient mathematical relaxation of the expected regularity. For instance, in \cite{delellis_otto_westdickenberg} the authors characterize blow-up limits and prove for a given entropy solution, there is an $\mathcal{H}^{n-1}$-rectifiable jump set $\mathcal{J}$ outside of which the solution is $VMO$. Further improvements in this direction have been made (see for example \cite{bianchini_marconi1, bianchini_marconi2} and references therein), with a particularly notable recent contribution by Silvestre.
In \cite{silvestre}, Silvestre uses a De Giorgi iteration to show that outside of $\mathcal{J}$, the solution is continuous in the sense that each point $x\notin \mathcal{J}$ is a Lebesgue point at which all blow-ups converge in $L^\infty_{loc}$.

Our last result is obtained via a modification of Silvestre's method and constitutes a similar, but weaker, result for the system \eqref{eqn:Euler}.
To state the result formally, 
we introduce some appropriate quantities which are defined everywhere, independently of the chosen version.
Namely, for $g:\R\times \R \rightarrow \R$, we define the following upper (resp. lower) semicontinuous envelopes of $g$ via
\begin{equation}
\overline{g}(t,x) \coloneqq \lim_{r \rightarrow 0^+}\esssup_{(\tau,y)\in B_r(t,x)} g(\tau, y) \qquad \text{and} \qquad \underline{g}(t,x) \coloneqq \lim \essinf_{(\tau,y)\in B_r(t,x)} g(\tau,y).
\end{equation}
Note that for $g\in L^\infty_{loc}$, $\overline{g}$ is always upper semicontinuous while $\underline{g}$ is always lower semicontinuous.
We our now ready to state our final result:
\begin{theorem}\label{thm:cont}
Suppose $u:\R^+ \times \R \rightarrow \R^2$ is a bounded entropy solution to \eqref{eqn:Euler}. Then,
\begin{equation}
\rho = \overline{\rho}, \quad \lambda_1 = \underline{\lambda}_1, \quad \text{and} \quad \lambda_2 = \overline{\lambda}_2 \text{ almost everywhere on }\{\underline{\rho}\neq0\}
\end{equation}
and
\begin{equation}
m = \begin{cases}
	&\overline{m} \text{ almost everywhere on } \{\underline{\rho}\neq 0\} \cap \{\underline{\lambda}_1 \ge 0 \}\\
	&\underline{m} \text{ almost everywhere on } \{\underline{\rho}\neq0\} \cap \{ \overline{\lambda}_2 \le 0 \}
	\end{cases}.
\end{equation}
\end{theorem}

Fundamentally, Theorem \ref{thm:cont} is a rigorous formulation of the claim that for a non-vacuum solution, the density, and the Riemann invariants are each semicontinuous almost everywhere. Moreover, for a large number of solutions we can even regain semicontinuity of the momentum, $m$. Note that we obtain a form of classical regularity for the conserved quantities. To our knowledge, this is the only example of a non-Temple-class system where we can show the production of any classical regularity. In particular, while our methods apply strictly to the $\gamma = 3$ Euler system, it is unclear whether this production of semicontinuity is generic for $2\times 2$ systems, specific to the $\gamma = 3$ Euler system, or specific to systems with sufficiently many entropies.

\begin{remark}
Note, Theorem \ref{thm:cont} only works away from vacuum. This stands in contrast to the scalar case considered by Silvestre in \cite{silvestre}, where there is no problem with vacuum. Our problem with vacuum seems to stem from the limitations of the kinetic formulation of \eqref{eqn:Euler} which is central to the proof of both Theorem \ref{thm:trace} and \ref{thm:cont}. The kinetic formulation loses meaning at vacuum states, since it is derived from the so-called weak entropies, which vanish at vacuum. Therefore, for example, the kinetic formulation has trouble differentiating two vacuum states where nearby, the velocity $\frac{m}{\rho}$ is drastically different. To circumvent this problem, one might attempt to use the relative entropy method developed by Dafermos and DiPerna in \cite{dafermos, diperna} which often is able to overcome the difficulties posed by vacuum. However, merging the two methods seems difficult given the low level of regularity present.
\end{remark}

\begin{remark}
In Theorem \ref{thm:cont}, we have no explicit description of the set of discontinuity points of an entropy solution $u$. Instead, we rely upon Lebesgue's Differentiation Theorem to say that almost every $(t,x)$ is a Lebesgue point. In \cite{delellis_otto_westdickenberg}, the jump set $\mathcal{J}$ is explicitly defined in terms of the entropy dissipation measure and it is shown that each $(t,x)\notin \mathcal{J}$ is a point of $VMO$, using a Liouville theorem (\cite[Proposition 6]{delellis_otto_westdickenberg}) for blow-ups at such points. We expect one can show a similar result here and also the analogous statement that $\mathcal{J}$ is codimension $1$ rectifiable. However, such a result relies upon on the notion of blow-ups at single points, which are surprisingly different from the notion of blow-ups along Lipschitz curves considered here. Since such an analysis would take us too far afield, these problems are left for future work.
\end{remark}

\subsection{Proof Overview}

In Section \ref{sec:preliminaries}, we begin by introducing the kinetic formulation of \eqref{eqn:Euler} and the additional tools we gain from it in the form of averaging lemmas, which originate in the work of Golse, Lions, Perthame, and Sentis in \cite{golse_lions_perthame_sentis} with roots in the earlier works of Golse, Perthame, and Sentis and, separately, Agoshkov in \cite{golse_perthame_sentis,agoshkov}. We also introduce a few notational conventions used throughout.

\medskip

In Section \ref{sec:trace}, we use the kinetic formulation to prove Theorem \ref{thm:trace}. The proof exploits the same particular structure of \eqref{eqn:Euler} and employs similar techniques as the proof of Vasseur's result that $u\in C(0,T;L^p_{loc})$. Similar to \cite{vasseur_trace}, there are two new challenges in the proof. Vasseur's theorem implies two-sided traces along flat space-like curves. By contrast, Theorem \ref{thm:trace} implies one-sided traces along Lipschitz time-like curves. As neither \eqref{eqn:Euler} nor its kinetic formulation are symmetric in $t$ and $x$, we cannot simply interchange their roles. In addition, some care has to be taken when straightening the time-like curves to ensure one has sufficient regularity to perform all the desired arguments and that the additional terms associated to the geometry of the curve do not affect the proof.

\medskip

In Section \ref{sec:DeGiorgi}, we use the kinetic formulation to prove a new partial regularity result for the Riemann invariants $\lambda_1(u)$ and $\lambda_2(u)$:
\begin{proposition}\label{prop:Linfty}
Suppose $u = (\rho,m)$ is an entropy solution to \eqref{eqn:Euler} with $\|\rho\|_{L^\infty(B_2)} + \|m/\rho\|_{L^\infty(B_2)} \le \Gamma$ and $\essinf_{(t,x) \in B_2} \rho(t,x) \ge M > 0$. Suppose $\overline{u} = (\overline{\rho},\overline{m})$ is a fixed non-vacuum state. Then, there is an $\eps_0 = \eps_0(\Gamma, M) > 0$, $\alpha =\frac{1}{21}$, and $\widetilde{C} = \widetilde{C}(\Gamma,M)$ such that for any $0 < \eps < \eps_0$,
\begin{align}
\int_{B_2}\left(\lambda_1(\overline{u}) - \lambda_1(u)\right)_+ \;dxdt < \eps \qquad &\text{ implies } \qquad \esssup_{(t,x)\in B_1} \left(\lambda_1(\overline{u}) - \lambda_1(u(t,x))\right)_+ < \widetilde{C}\eps^\alpha\\
\int_{B_2}\left(\lambda_2(u) - \lambda_2(\overline{u})\right)_+ \;dxdt < \eps \qquad &\text{ implies } \qquad \esssup_{(t,x)\in B_1} \left(\lambda_2(u(t,x)) - \lambda_2(\overline{u})\right)_+ < \widetilde{C}\eps^\alpha.
\end{align}
\end{proposition}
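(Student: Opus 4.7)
The plan is to implement a modified De Giorgi-type level-set iteration on the kinetic formulation of \eqref{eqn:Euler}. Recall from \cite{lions_perthame_tadmor2} specialized to $\gamma = 3$ that the indicator density $\chi(\xi; u) = \mathbf{1}_{\lambda_1(u) < \xi < \lambda_2(u)}$ satisfies
\[ (\partial_t + \xi\partial_x)\chi = \partial_\xi^2 m \]
in $\mathcal{D}'$, where $m \ge 0$ is the entropy dissipation measure. Since the change of variable $\xi \mapsto -\xi$ exchanges $\lambda_1$ and $-\lambda_2$, it suffices to treat the $\lambda_1$ inequality. I would fix an increasing sequence of truncation levels $c_k = \tfrac{C_\infty}{2}(1 - 2^{-k})$ with $C_\infty = \widetilde C \eps^\alpha$ to be determined, shrinking space-time radii $r_k = 1 + 2^{-k}$, and smooth cutoffs $\eta_k \in C_c^\infty(B_{r_k})$ with $\eta_k \equiv 1$ on $B_{r_{k+1}}$ and $\abs{\nabla_{t,x}\eta_k} \lesssim 2^k$. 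The level energies to iterate on are
\[ U_k = \iint_{B_{r_k}} (\lambda_1(\overline u) - c_k - \lambda_1(u))_+^2\, dx dt, \]
and the bridge to the kinetic formulation is the identity $\int_\R (a-\xi)_+ \chi(\xi; u)\,d\xi = \tfrac{1}{2}(a - \lambda_1(u))_+^2$, valid whenever $a \le \lambda_2(u)$. Since $\lambda_2(u) - \lambda_1(u) = \rho(u) \ge M$, this identity persists throughout the iteration provided $C_\infty$ stays small relative to $M$, which is the source of the smallness constraint on $\eps_0$.

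The core step is to derive a local energy inequality at level $k$ by testing the kinetic equation against $\Phi_k(\xi, t, x) = (\lambda_1(\overline u) - c_k - \xi)_+ \eta_k^2(t,x)$. Integration by parts in $\xi$ extracts the non-negative localized contribution $\int \eta_k^2 \,dm|_{\xi = \lambda_1(\overline u) - c_k}$ from the source side (using $\partial_\xi^2 \Phi_k = \delta_{\lambda_1(\overline u) - c_k}\, \eta_k^2$), and integration by parts in $(t, x)$ generates lower-order moments involving $\partial_t\eta_k^2$ and $\xi \partial_x\eta_k^2$. The $\xi$-factor is bounded by $\Gamma$ on the support of $\chi$ by the $L^\infty$ hypotheses, and a Chebyshev-type step then trades $U_{k+1}$ for the super-level set measure
\[ \abs{\{(t,x) \in B_{r_{k+1}} : (\lambda_1(\overline u) - c_k - \lambda_1(u))_+ > c_{k+1} - c_k\}}, \]
with $c_{k+1} - c_k = C_\infty 2^{-k-2}$. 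The velocity averaging lemma of \cite{golse_lions_perthame_sentis} applied to $\int \chi(\xi; u)(a-\xi)_+\,d\xi = \tfrac{1}{2}(a - \lambda_1(u))_+^2$ supplies the small fractional-Sobolev gain that upgrades the $L^1$ level-set bound to a super-linear power of $U_k$. The net result is a recursion
\[ U_{k+1} \le C\, b^k\, U_k^{1+\beta} \]
for some $b > 1$ and $\beta > 0$ depending only on $\Gamma$ and $M$.

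A standard iteration lemma then yields $U_k \to 0$ as soon as $U_0 \le \eps_0^\ast := (Cb^{1/\beta})^{-1/\beta}$. Since $U_0 \le 2\Gamma \int_{B_2}(\lambda_1(\overline u) - \lambda_1(u))_+\, dx dt \le 2\Gamma \eps$ by hypothesis and the $L^\infty$ bound on the integrand, choosing $\eps_0$ sufficiently small forces $U_\infty = 0$, i.e., $\lambda_1(\overline u) - \lambda_1(u) \le C_\infty$ almost everywhere on $B_1$. Tracking the relationship between the constraints $U_0 \le \eps_0^\ast$, $C_\infty \le M/2$, and the exponent $\beta$ produced by the iteration then yields the advertised quantitative bound $C_\infty = \widetilde C \eps^\alpha$ for some implicit $\alpha \in (0,1)$, and the $\lambda_2$ case follows by symmetry.

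The main obstacle I anticipate is arranging the energy inequality so that the sign of each term is consistent and the recursion truly closes. The kinetic equation is first-order in $(t,x)$, and the measure source $\partial_\xi^2 m$ dissipates only in the velocity variable, so the naive test function $\Phi_k$ will likely need further modification in $\xi$ (for example an additional upper-cutoff near $\lambda_2(u)$, or an asymmetric truncation replacing $(a-\xi)_+$ by a quadratic variant) in order to convert the identity arising from the kinetic formulation into a one-sided sup-in-time bound on the level energy. Related to this, the De Giorgi exponent $\beta$ is ultimately dictated by the averaging lemma applied to a source that is only a measure, which limits how much regularity can be gained per step and makes the resulting Hölder exponent $\alpha \in (0,1)$ implicit and small.
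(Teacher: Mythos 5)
Your proposal correctly identifies the overall strategy of the paper: a De Giorgi-type iteration on the kinetic formulation, with a velocity averaging lemma supplying the gain of integrability and a Chebyshev step producing the superlinear recursion. But there are three interlocking gaps that prevent the recursion from actually closing, and they correspond to precisely the two lemmas the paper proves first.

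\textbf{The localized TV estimate on $\mu$ is missing.} The averaging-lemma gain is not automatic: the Tadmor--Tao inequality (the version the paper uses, its Lemma~\ref{lem:averaging2}) controls the averaged density in $W^{\theta,r}_{t,x}$ by $\|f\|_{L^2}^{6/7}(\,\cdot\,)^{1/7}$ where the second factor involves $\|\mu_i\|_{TV}$ for the \emph{localized} entropy dissipation measures generated by the cutoffs. In order for the iteration to close, one must show that these localized TV norms are themselves controlled by the iteration quantity $U_k$ (up to cutoff losses), otherwise the recursion reads $U_{k+1}\lesssim U_k^{(1+\theta_0)/2}\cdot(\text{constant})$ rather than a superlinear power of $U_k$. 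This is exactly the content of Lemma~\ref{lem:TV_estimate}, which tests the kinetic equation against $\varphi(t,x)\psi(v)\zeta_\eps(v)$ with the quadratic ramp $\psi(v)=\tfrac12(v-b)_+^2$; its two $v$-derivatives produce the characteristic function $\chi_{(b,\infty)}(v)$ and hence a bound on $\mu(B_r\times[a,\infty))$ by $\|f\|_{L^1(B_R\times[a,L])}$. Your energy identity only captures the measure on the single slice $\{\xi=\lambda_1(\overline u)-c_k\}$, which is not the right quantity and, more fundamentally, is ill-defined: $\mu$ is only a Radon measure and need not have $v$-marginals on null sets.

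\textbf{The proposed test function is too singular.} Testing with $\Phi_k=(\lambda_1(\overline u)-c_k-\xi)_+\eta_k^2$ requires $\Phi_k\in C^2_v$ because the source is $\partial_\xi^2 m$ with $m$ merely a measure; $(a_k-\xi)_+$ is only Lipschitz, so the formal manipulation $\langle\Phi_k,\partial_\xi^2 m\rangle=\langle\delta_{a_k}\eta_k^2,m\rangle$ has no meaning in general. The paper avoids this by using the $C^1$ function $(v-b)_+^2/2$ (whose second derivative is $\chi_{(b,\infty)}$, a bounded function, not a delta) and a careful $\zeta_\eps$ regularization with pointwise-everywhere control of $(\psi\zeta_\eps)''$ so that the dominated convergence theorem for $\mu$ applies.

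\textbf{The wrong averaging lemma is cited.} The Golse--Lions--Perthame--Sentis result requires the source to lie in $L^p$, while here the source is $\partial_\xi^2\mu$ with $\mu$ a finite measure. You do flag that the source is ``only a measure'' as a limitation, but then the paper's choice of Tadmor--Tao (which explicitly accommodates TV sources) is forced, and that in turn requires the TV control above. In short, your outline correctly sketches the shape of the argument, but the Caccioppoli-type ingredient --- the paper's Lemma~\ref{lem:TV_estimate} giving $\|\mu\|_{TV(B_r\times[a,L])}\lesssim(R-r)^{-1}\|f\|_{L^1(B_R\times[a,L])}$ --- is the mechanism by which the recursion actually gains a power of $U_k$, and it is absent from your proposal. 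Your choice of a quadratic level energy $U_k$ rather than the paper's $L^1$ quantity $\int_{B_k}\int_{b+l_k}^L f$ is a harmless stylistic difference once that estimate is in place, but the estimate itself cannot be skipped.
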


This result implies that if the $L^1$-oscillation below (resp. above) a fixed threshold is sufficiently small in a ball, then the $L^\infty$-oscillation below (resp. above) the threshold is controlled quantitatively on a smaller ball. The above result is the main new ingredient in the proof of Theorem \ref{thm:cont} and involves a new application of the first lemma of De Giorgi to conservation laws. Originally introduced by De Giorgi in \cite{degiorgi} for the study of elliptic equations, recently the method has seen novel applications in the areas of kinetic theory \cite{silvestre_imbert,mouhot_guerand,golse_imbert_mouhot_vasseur} and elsewhere (for example, see \cite{stokols_vasseur,caffarelli_vasseur} and the references therein), and most relevantly for us, in the work of Silvestre on scalar conservation laws \cite{silvestre}. In the kinetic framework studied by Silvestre in the scalar case, the lack of a dissipative right hand side means that hypoelliptic estimates are unavailable. Instead, one must rely upon the dispersive nature of the free transport operator to gain integrability, reminiscent of the ideas of \cite{golse_imbert_mouhot_vasseur}. Unlike in the scalar case, we are unable to work directly on the entropy solution $u$, but instead are forced to work on the Riemann invariants. Even then, we are still only able to obtain one-sided results. Yet, because of the algebraic identities,
\begin{equation}
\rho = \lambda_2(u) - \lambda_1(u) \qquad \text{ and } \qquad m = \frac{\left[\lambda_2(u)\right]^2 - \left[\lambda_1(u)\right]^2}{2},
\end{equation}
we are still able to leverage the one-sided results to gain information about the original conserved quantities. In Section \ref{sec:DeGiorgi}, we attempt to highlight precisely when and why these differences with the scalar case arise.

\medskip 

In Section \ref{sec:semicontinuity}, we combine Proposition \ref{prop:Linfty} with Lebesgue's Differentiation Theorem to prove Theorem \ref{thm:cont}. Actually, Theorem \ref{thm:cont} will be derived from a slightly more precise result, concerning the behavior of $u$ at any point of vanishing mean oscillation.

\medskip

In Section \ref{sec:ODE}, we combine Proposition \ref{prop:Linfty} with the $L^1$ control provided by Theorem \ref{thm:trace} to prove Theorem \ref{thm:improved_traces}.

\section{Preliminaries}\label{sec:preliminaries}

\subsection{Kinetic Formulation and Averaging Lemmas}

Here we begin by recalling that the kinetic formulation of scalar conservation laws introduced in \cite{lions_perthame_tadmor1} has an analogue for the isentropic Euler system. For $\gamma = 3$, the associated kinetic formulation is purely kinetic and takes the following particularly simple form:
\begin{equation}\label{eqn:kinetic}
\left\{\begin{array}{ll}
\partial_t f + v \partial_x f = -\partial_{vv}\mu\\[3pt]
f(t,x,v) = \chi_{[a(t,x),b(t,x)]}(v) \text{ for almost every }(t,x)\\[3pt]
\supp(f(t,x)) \subset [-L,L],
\end{array}\right.
\end{equation}
where $f(t,x,v)$ is our unknown, which has been augmented with the velocity variable $v\in \R$, and $\mu$ is a finite (non-negative) measure. From the kinetic equation, we recover \eqref{eqn:Euler} via the relations
\begin{equation}\label{eqn:moments}
\rho(t,x) = \int_{-L}^L f(t,x,v) \;dv \qquad \text{and}\qquad m(t,x) = \int_{-L}^L vf(t,x,v) \;dv,
\end{equation}
which imply, in fact, $a(t,x) = \lambda_1(t,x)$ and $b(t,x) = \lambda_2(t,x)$.
In this form, the following is known:
\begin{theorem}[\protect{\cite[Theorem 3]{lions_perthame_tadmor2}}]\label{thm:kinetic}
Let $u = (\rho,m)$ be a bounded entropy solution to \eqref{eqn:Euler}. Then, there is a unique pair $(f, \mu)$ such that $f\in L^\infty$, $\mu$ is a finite Borel measure, $f$ and $\mu$ solve \eqref{eqn:kinetic}, and $f$ is related to $(\rho,m)$ via \eqref{eqn:moments}.
\end{theorem}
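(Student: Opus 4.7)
The plan is to build $(f, \mu)$ by hand: define $f$ as the characteristic function of the Riemann-invariant interval, verify the moment identities algebraically, and then extract the dissipation measure $\mu$ by feeding the full family of smooth convex entropies of \eqref{eqn:Euler} into \eqref{eqn:entropy}. The existence of that family is a special feature of $\gamma = 3$ and drives the entire argument.

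First I would set
\[
f(t,x,v) \coloneqq \chi_{[\lambda_1(u(t,x)), \lambda_2(u(t,x))]}(v).
\]
By the $L^\infty$ hypotheses on $\rho$ and $m/\rho$, the Riemann invariants are bounded by some $L$, so $f$ is $\{0,1\}$-valued with $v$-support in $[-L,L]$ and $\|f\|_{L^\infty} \le 1$. The algebraic identities $\rho = \lambda_2 - \lambda_1$ and $m = \rho(\lambda_1+\lambda_2)/2$ immediately give $\int f\,dv = \rho$ and $\int vf\,dv = (\lambda_2^2-\lambda_1^2)/2 = m$, so \eqref{eqn:moments} holds, and the $L^1$ bound follows from $\|\rho\|_{L^\infty_t L^1_x} < \infty$.

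Next I would exploit the fact, specific to $\gamma = 3$, that every smooth convex $\phi \in C^2(\R)$ generates a convex entropy-flux pair
\[
\eta_\phi(\rho,m) = \int_\R \chi_{[\lambda_1,\lambda_2]}(v)\,\phi(v)\,dv, \qquad q_\phi(\rho,m) = \int_\R v\,\chi_{[\lambda_1,\lambda_2]}(v)\,\phi(v)\,dv,
\]
and that $\eta_\phi$ is convex in $(\rho,m)$ precisely when $\phi$ is convex in $v$. Plugging $(\eta_\phi, q_\phi)$ into \eqref{eqn:entropy} and swapping the order of integration yields $\partial_t \int f\phi\,dv + \partial_x \int vf\phi\,dv = -T_\phi$ for a nonnegative distribution $T_\phi$ on $(t,x)$, linear in $\phi$ modulo affine terms. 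Testing against the extremal wedge family $\phi_w(v) = (v-w)_+$, whose second derivatives are Diracs $\delta_w$, and invoking a Riesz-type representation in the parameter $w$, one then obtains a single nonnegative Radon measure $\mu$ on $\R^+\times\R\times[-L,L]$ such that $T_\phi = \int \phi''(v)\,d\mu(\cdot,\cdot,v)$ for all admissible $\phi$. Integration by parts in $v$ converts the identity into $\partial_t f + v\partial_x f = -\partial_{vv}\mu$, which is \eqref{eqn:kinetic}. Finiteness of $\mu$ follows from the choice $\phi(v) = v^2/2$, corresponding to the physical energy entropy, whose global dissipation is controlled by the initial energy bound implicit in $\|u\|_{L^\infty_t L^1_x} < \infty$.

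Uniqueness is immediate: the constraint that $f$ be the indicator of an interval forces $a = \lambda_1$, $b = \lambda_2$ from the first two $v$-moments, and $\mu$ is then determined as the unique nonnegative Radon measure supported in $v\in[-L,L]$ whose second $v$-derivative equals $-(\partial_t f + v\partial_x f)$, since one can integrate twice in $v$ starting from zero boundary conditions at $|v| = L$. The main obstacle I anticipate is the Riesz representation step: one must upgrade the compatible family $\{T_\phi\}$ of nonnegative distributions on $(t,x)$, one per convex $\phi$, into a single nonnegative measure on the full product space $(t,x,v)$. This requires verifying enough joint measurability in the parameter $w$ of the distributions $T_{\phi_w}$ to apply a Schwartz-kernel-type argument, and it is precisely the abundance of entropies for $\gamma = 3$ that makes the wedge family $\{\phi_w\}_{w\in\R}$ generate the cone of convex functions modulo affines, so that $\mu$ is pinned down by its action on these extremals.
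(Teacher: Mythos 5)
This theorem is stated in the paper as a direct citation of \cite{lions_perthame_tadmor2}, with no in-paper proof, so there is nothing to compare against except the source. Your sketch does reproduce the standard construction from that paper: define $f$ as the indicator of the Riemann-invariant interval, verify \eqref{eqn:moments} algebraically, use the family of weak entropy pairs $(\eta_\phi,q_\phi)$ generated by convex $\phi$, extract $\mu$ from the corresponding entropy-dissipation distributions by testing against the extremal wedges $(v-w)_+$, and control $\|\mu\|_{TV}$ with the quadratic $\phi$ (which does reproduce the physical energy $m^2/(2\rho) + \rho^3/24$).

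Two points in the sketch carry most of the real work and are genuinely incomplete as written. First, the claim that $\eta_\phi$ is convex in $(\rho,m)$ whenever $\phi$ is convex in $v$ is itself a nontrivial computational lemma of Lions, Perthame, and Tadmor, not an observation; it is exactly where the special structure at $\gamma=3$ enters and cannot be treated as given. Second, upgrading the family of nonnegative $(t,x)$-distributions $T_{\phi_w}$ into a single nonnegative Radon measure $\mu$ on $(t,x,v)$ requires a disintegration/measurability argument that you correctly flag but do not carry out; this is precisely the step the original proof handles carefully. Finally, a minor remark: your uniqueness argument is actually cleaner than you state. Finiteness of $\mu$ alone kills the affine-in-$v$ ambiguity left by inverting $\partial_{vv}$ (a finite signed measure whose $v$-dependence is affine must vanish), so there is no need to impose boundary conditions at $|v|=L$ --- which is fortunate, since the support of $\mu$ in $[-L,L]$ is not immediate from \eqref{eqn:kinetic} and is only derived later in the paper as a consequence of Lemma \ref{lem:TV_estimate}. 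Modulo filling in those two lemmas, the route you propose is the correct one.
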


Next, we have the following averaging lemma, from which we will gain compactness of sequences of solutions to \eqref{eqn:kinetic} in Section \ref{sec:trace}:
\begin{lemma}[\protect{\cite[Theorem B]{lions_perthame_tadmor1}}]\label{lem:averaging1}
Let $1 < p \le 2$ and say $f_k$ is a sequence of distributional solutions to the equation
\begin{equation}
\partial_t f_k + a(v)\partial_x f_k = (1- \Delta_{x,t})^{1/2}(1 - \partial_{vv})^{r/2} g_k,
\end{equation}
for some function $a\in C^\infty(\R)$, some $r > 0$, and some sequence $g_k$. If $\{g_k\}$ is compact in $L^p([0,T]\times \R \times \R)$, $\{f_k\}$ is bounded in $L^p_{loc}([0,T]\times \R \times \R)$, and $a$ satisfies
\begin{equation}\label{eqn:nondegenerate}
\left|\set{(t,x) \big| t^2 + x^2a(v) = 0}\right| = 0,
\end{equation}
then for any $\psi \in L^q(\R)$ for $q = \frac{p}{p-1}$, $\{\int_{\R} f_k(\cdot,\cdot,v) \psi(v) \ dv\}$ is compact in $L^p_{loc}([0,T]\times \R)$.
\end{lemma}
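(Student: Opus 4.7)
The plan is to prove this through Fourier analysis in $(t,x)$, following the strategy originated by Golse--Lions--Perthame--Sentis. Let $(\tau, \xi)$ denote the variables dual to $(t,x)$. Taking the Fourier transform converts the transport operator on the left-hand side into multiplication by $i(\tau + \xi a(v))$, while the right-hand side becomes multiplication by $(1 + \tau^2 + \xi^2)^{1/2}$ times the symbol of $(1 - \partial_{vv})^{r/2}$ applied to $\hat{g}_k$. Writing $\rho_k(t,x) := \int_\R f_k(t,x,v)\psi(v)\,dv$ for the velocity average, its Fourier transform takes the form of a $v$-integral against a multiplier that is singular exactly on the characteristic set $\set{(\tau,\xi,v) : \tau + \xi a(v) = 0}$.

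The crux of the argument is a two-scale splitting in velocity that exploits the nondegeneracy hypothesis \eqref{eqn:nondegenerate}, which I read quantitatively as $|\set{v : |\tau + \xi a(v)| < \delta}| \lesssim \delta^\theta$ for $(\tau,\xi)$ on the unit sphere, for some $\theta > 0$. Fix $\delta > 0$. On the bad set where $|\tau+\xi a(v)| < \delta$ one cannot invert the symbol, so I would substitute the uniform $L^p$ bound on $f_k$ and pay a factor $\delta^{\theta/q}$ via Hölder's inequality against $\psi \in L^q$. On the complementary good set one inverts the transport symbol and transfers the burden onto $g_k$, paying a factor $(1 + |\tau|^2 + |\xi|^2)^{1/2}\delta^{-1}$ from the inversion together with the growth of $(1-\partial_{vv})^{r/2}$ absorbed by integrating by parts in $v$ against a smooth truncation of $\psi$. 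Balancing the two contributions by choosing $\delta \sim (1 + |\tau|^2 + |\xi|^2)^{-\beta/2}$ for an appropriate $\beta \in (0,1)$ yields a uniform bound $\rho_k \in W^{s,p}_{loc}$ for some $s = s(\theta, r) > 0$. Compactness in $L^p_{loc}$ then follows from Rellich--Kondrachov, while the passage from boundedness of $\{g_k\}$ to compactness of $\{g_k\}$ is transferred through the linearity of the estimate.

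The case $p = 2$ is clean: Plancherel directly converts the Fourier-side bounds into $L^2$ estimates, and the $(1-\partial_{vv})^{r/2}$ operator can be dualized onto $\psi$ with no loss. The main obstacle, where I expect the analysis to be most delicate, is extending to the full range $1 < p \le 2$. For $p \ne 2$ one must replace Plancherel with a Littlewood--Paley decomposition and Mihlin--Hörmander-type multiplier estimates, since the inverse transport symbol is a smooth but nonstandard Fourier multiplier in $(\tau, \xi)$ whose seminorms on dyadic annuli must be controlled uniformly. Equally subtle is the interaction of the $v$-derivatives from $(1-\partial_{vv})^{r/2}$ with the cutoff separating the good and bad regions: differentiating this cutoff costs additional negative powers of $\delta$ that must be bookkept carefully in the optimization, and it is precisely this bookkeeping that determines the explicit value of $s$ and explains why $\psi \in L^q$ is the natural hypothesis.
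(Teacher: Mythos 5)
This lemma is not proved in the paper at all: it is quoted verbatim from Lions--Perthame--Tadmor as \cite[Theorem B]{lions_perthame_tadmor1}, so there is no internal proof to compare against. That said, your sketch has a genuine gap that is worth flagging, because it concerns precisely the point that makes this particular statement delicate. You claim that balancing the bad/good contributions yields a \emph{uniform} bound $\rho_k \in W^{s,p}_{loc}$ for some $s > 0$, and then invoke Rellich--Kondrachov. This cannot work as stated. Run the usual Golse--Lions--Perthame--Sentis computation in the model case $p = 2$: with $N = (1 + \tau^2 + \xi^2)^{1/2}$, the bad set contributes $\lesssim (\delta/N)^{1/2}\|\hat f_k\|_{L^2_v}$ and the good set, after the dyadic annulus summation and absorbing the symbol of $(1-\Delta_{t,x})^{1/2}$, contributes $\lesssim N \cdot (N\delta)^{-1/2}\|\hat g_k\|_{L^2_v}$. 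Optimizing in $\delta$ produces $|\hat\rho_k| \lesssim \|\hat f_k\|^{1/2}(N\|\hat g_k\|)^{1/2} \cdot N^{-1/2} = \|\hat f_k\|^{1/2}\|\hat g_k\|^{1/2}$, with \emph{no} decay in $N$: the loss of one full derivative from $(1-\Delta_{t,x})^{1/2}$ exactly wipes out the velocity-averaging gain. So boundedness of $\{g_k\}$ alone gives no Sobolev regularity for the averages, and no compactness.

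This is exactly why the hypothesis is that $\{g_k\}$ is \emph{compact} in $L^p$, not merely bounded, and it cannot be dispatched by saying the "passage from boundedness to compactness is transferred through linearity of the estimate." The mechanism in the original Theorem B is an approximation/decomposition argument: one exploits compactness of $\{g_k\}$ to replace $g_k$, up to an error small in $L^p$, by functions lying in a fixed compact (hence approximable by smooth compactly supported) set; for the smooth part $(1-\Delta_{t,x})^{1/2}(1-\partial_{vv})^{r/2}$ applied to it lands back in $L^p$ with quantitative bounds and the basic averaging lemma does give a uniform $W^{s,p}_{loc}$ estimate; for the small remainder one uses only the rough $L^p \to L^p$ continuity of the solution-to-average map (which, per the computation above, is all one has). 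Compactness then follows from writing the averaging map as a limit of compact operators. Without this step your argument proves nothing, because the crucial quantitative gain you assert is false. Your remarks about the $p \ne 2$ case needing Littlewood--Paley / H\"ormander--Mihlin multiplier bounds are on target, but they are secondary to repairing the structure of the argument just described.
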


Finally, we will also need quantitative averaging lemmas, which will be used in Section \ref{sec:DeGiorgi} to gain integrability. For a comprehensive study of averaging lemmas, we refer to the treatise of DiPerna, Lions, and Meyer \cite{diperna_lions_meyer}, however, the specific one we will use is:
\begin{lemma}[\protect{\cite[Averaging Lemma 2.1]{tadmor_tao}}]\label{lem:averaging2}
Suppose $f \in L^2$ is a solution to
\begin{equation}
\partial_t f + v\partial_x f = \partial_{vv} \mu_2 + \partial_{v}\mu_1 + \mu_0 + g,
\end{equation}
where $\mu_0,\mu_1,\mu_2 \in \M$, $g\in L^1$. Then, for $\theta\in (0,1/7)$, $r = \frac{7}{4}$, and for any $\psi \in C_c^\infty([-2,2])$ a bump function,
\begin{equation}
\left\|\int f(t,x,v)\psi(v) \ dv \right\|_{W^{\theta,r}_{t,x}} \lesssim \|f\|_{L^2}^{6/7}\left(\|g\|_{L^1} + \|\mu_0\|_{TV} + \|\mu_1\|_{TV} + \|\mu_2\|_{TV}\right)^{1/7}.
\end{equation}
\end{lemma}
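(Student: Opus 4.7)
The plan is to establish this as a standard velocity averaging estimate via the Fourier-analytic approach, treating the measure-valued right-hand side as a distribution of order at most two in $v$. First, I would Fourier transform the equation in the $(t,x)$ variables. Writing $\widetilde f(\tau, \xi, v) = \mathcal F_{t,x} f$ and similarly for the right-hand side, the transport equation becomes the pointwise identity
\begin{equation*}
i(\tau + v\xi)\,\widetilde f(\tau, \xi, v) = \partial_{vv}\widetilde \mu_2 + \partial_v \widetilde \mu_1 + \widetilde \mu_0 + \widetilde g.
\end{equation*}
The key feature, which is what makes measure right-hand sides tractable, is that $\widetilde \mu_k \in L^\infty_{\tau, \xi}$ with $\|\widetilde \mu_k\|_\infty \le \|\mu_k\|_{TV}$ and $\|\widetilde g\|_\infty \le \|g\|_{L^1}$.

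Second, I would decompose the velocity average $F(t,x) = \int f\psi\,dv$ according to a ``resonant'' vs.\ ``non-resonant'' split of the symbol. Pick a smooth cutoff $\phi$ with $\phi \equiv 1$ on $[-1,1]$ and supported in $[-2,2]$, and for each $(\tau, \xi)$ and a parameter $\delta > 0$ to be chosen, write $\widehat F = A + B$ with
\begin{equation*}
B = \int \widetilde f(\tau, \xi, v)\,\psi(v)\,\phi\!\left(\tfrac{\tau + v\xi}{\delta}\right) dv, \qquad A = \widehat F - B.
\end{equation*}
The resonant piece $B$ is estimated by Cauchy--Schwarz, using that the support of the cutoff in $v$ has measure $O(\delta/|\xi|)$, giving $|B(\tau,\xi)| \lesssim (\delta/|\xi|)^{1/2}\,\|\widetilde f(\tau, \xi, \cdot)\|_{L^2_v}$. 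The non-resonant piece $A$ is handled by dividing by $i(\tau + v\xi)$ (legal on the support of $1-\phi$) and integrating by parts in $v$ to move $\partial_v$ and $\partial_{vv}$ off the measures onto the multiplier $\psi(v)(1-\phi)/(\tau + v\xi)$. Each such derivative produces factors of order $|\xi|/\delta$ because of the cutoff and the pole, so one obtains $|A(\tau,\xi)| \lesssim \delta^{-1}(\|g\|_{L^1} + \|\mu_0\|_{TV}) + |\xi|\delta^{-2}\|\mu_1\|_{TV} + |\xi|^2\delta^{-3}\|\mu_2\|_{TV}$.

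Third, I would work dyadically in $|\xi| \sim N$ and optimize $\delta = \delta(N)$ at each scale. The $L^2$ norm of $B$ on a dyadic shell is controlled by $(\delta/N)^{1/2}\|f\|_{L^2}$, while $A$ contributes the above $L^\infty$-in-frequency bound times the square root of the shell volume. Balancing the dominant worst term $|\xi|^2\delta^{-3}\|\mu_2\|_{TV}$ against $(\delta/|\xi|)^{1/2}\|f\|_{L^2}$ forces $\delta \sim N^{5/7}(M/\|f\|_{L^2})^{2/7}$, where $M = \|g\|_{L^1} + \sum \|\mu_k\|_{TV}$. Substituting back yields a gain of $N^{1/7}$ weight in $|\widehat F|$, and the exponents $\|f\|_{L^2}^{6/7}M^{1/7}$ appear naturally from this $2/7$ vs.\ $5/7$ balancing. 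Summing the dyadic bounds in $\ell^r$ and applying the Littlewood--Paley characterization of $W^{\theta,r}$ gives the announced estimate, with the restriction $\theta < 1/7$ arising to ensure summability of the dyadic series away from the endpoint.

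The main obstacle, I expect, is the final interpolation/summation step: getting a genuine $W^{\theta,r}$ estimate, as opposed to only an $L^2$-based fractional Sobolev bound, requires going beyond Plancherel. The cleanest route is $TT^*$ together with real interpolation between the $L^2$-based estimate on $f$ and the $L^1/TV$-based bound on the sources, along the lines of the argument sketched in the averaging-lemma literature \cite{diperna_lions_meyer}. The bookkeeping of derivatives falling on the cutoff versus on the pole $1/(\tau+v\xi)$ during the integration by parts for $A$ is technical but routine; the delicate part is verifying that the optimization over $\delta$ survives after integrating in $\tau$ on each dyadic shell, which is where the loss of the endpoint $\theta = 1/7$ enters.
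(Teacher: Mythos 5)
The paper does not prove this lemma; it is quoted verbatim as \cite[Averaging Lemma 2.1]{tadmor_tao}, so there is no ``paper's own proof'' to compare against. That said, your sketch is a sensible reconstruction of the Tadmor--Tao argument. The resonant/non-resonant split of the symbol $\tau + v\xi$ at scale $\delta$, the Cauchy--Schwarz bound on the resonant piece giving $(\delta/|\xi|)^{1/2}\|\widetilde f\|_{L^2_v}$, the integration by parts in $v$ for the non-resonant piece with the worst term $|\xi|^2\delta^{-3}\|\mu_2\|_{TV}$, and the dyadic optimization of $\delta$ are all in the spirit of their proof. Your exponent bookkeeping is consistent: balancing $(\delta/N)^{1/2}\|f\|_{L^2}$ against $N^2\delta^{-3} M$ with $M$ the total source norm gives $\delta \sim N^{5/7}(M/\|f\|_{L^2})^{2/7}$, hence a frequency gain of $N^{-1/7}$ and constant $\|f\|_{L^2}^{6/7}M^{1/7}$, and the integrability exponent satisfies $1/r = \tfrac{6}{7}\cdot\tfrac12 + \tfrac17\cdot 1 = \tfrac47$, i.e.\ $r = 7/4$, matching the statement.

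One point you should be careful about if you were to write this out in full: when you assert $\|\widetilde\mu_k(\tau,\xi,\cdot)\|$ is controlled by $\|\mu_k\|_{TV}$, the object $\widetilde\mu_k(\tau,\xi,\cdot)$ is the partial Fourier transform in $(t,x)$ of a measure on $\R^3$, which is not automatically a finite measure in $v$ uniformly in $(\tau,\xi)$ unless one disintegrates appropriately. Tadmor--Tao handle this by taking the full Fourier transform and keeping a dual $v$-frequency variable, rather than treating $v$ as physical throughout; the integration by parts in your sketch then becomes a multiplier estimate in that dual variable. This is a technical reorganization rather than a gap in the idea, but it is where a naive write-up of your argument would run into trouble. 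Since the paper simply cites the result, none of this affects the correctness of the paper.
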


\subsection{Notation}

\begin{itemize}
\item In the following, $L^p(D)$ denotes the standard Lebesgue space for $1\le p \le \infty$ on the domain $D$. When the domain of integration is clear from context, we will frequently write $L^p$.

\item Inessential constants that may be replaced by any larger number and may change from line to line are denoted by $C$. We will also make use of the convention that $a \lesssim_{\Lambda} b$ means there exists a constant $C = C(\Lambda)$ such that $a \le Cb$.

\item Since we deal with blow-up solutions, frequently, we deal simultaneously with two coordinate systems. We will attempt to stick to the following naming convention: $h$ will denote a fixed Lipschitz curve; $(t,x) = (t,h(t)) \in [0,T]\times \R$ will denote a location in physical space we are zooming in about; and, $(\tau, y) \in \R^2$ will denote the local coordinates of the blow-up.

\item Finally, we will frequently use several notations for space-time balls: for us $B_r(x) \subset \R^2$ is the ball centered at $x\in \R^2$ of radius $r$. If no reference is made to the center, $x = 0$. Note all balls should be considered to be a subset of $\R^2$ unless otherwise stated.
\end{itemize}

\section{Existence of Strong Traces}\label{sec:trace}

In this section, we prove the following proposition, which immediately implies Theorem \ref{thm:trace}.
\begin{proposition}\label{prop:trace_kinetic}
For each $h:[0,T] \rightarrow \R$ Lipschitz and $f \in L^\infty$ a solution to \eqref{eqn:kinetic}, there are functions $f^+(t,v)$ and $f^-(t,v)$ such that $f^+(t,v) = \chi_{[a^+(t),b^+(t)]}(v)$, $f^-(t,v) = \chi_{[a^-(t),b^-(t)]}(v)$, and
\begin{equation}
\lim_{n \rightarrow \infty} \esssup_{0 < y < \frac{1}{n}} \int_0^T\int_{\R} |f(t,h(t) \pm y, v) - f^\pm(t,v)| \ dvdt = 0.
\end{equation} 
\end{proposition}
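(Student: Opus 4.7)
The plan is to mimic Vasseur's time-trace argument from \cite{vasseur_kinetic}, adapting it to one-sided spatial traces along a Lipschitz curve. First I would straighten the curve via the change of variables $\tilde{x} = x - h(t)$ and set $\tilde f(t, \tilde x, v) = f(t, h(t) + \tilde x, v)$. A direct computation from \eqref{eqn:kinetic} shows that, in the sense of distributions,
\begin{equation*}
\partial_t \tilde f + (v - \dot h(t))\, \partial_{\tilde x}\tilde f = -\partial_{vv}\tilde\mu,
\end{equation*}
where $\tilde\mu$ is the push-forward of $\mu$ by $(t, x, v) \mapsto (t, x - h(t), v)$, still a finite non-negative measure. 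Crucially, $\tilde f$ remains of the form $\chi_{[\tilde a(t, \tilde x),\, \tilde b(t, \tilde x)]}(v)$, and since $\dot h \in L^\infty([0,T])$ the transport speed $v - \dot h(t)$ retains the nondegeneracy needed for the averaging lemmas.

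To construct the $+$-trace, I would fix a sequence $y_n \to 0^+$ (chosen via Fubini so that each slice $\tilde f(\cdot, y_n, \cdot)$ is well-defined in $L^1 \cap L^\infty$) and set $F_n(t, \tilde x, v) := \tilde f(t, y_n \tilde x, v)$ on $[0, T] \times [1, 2] \times [-L, L]$. A direct computation shows that $F_n$ satisfies
\begin{equation*}
y_n\, \partial_t F_n + (v - \dot h(t))\, \partial_{\tilde x} F_n = -y_n\, \partial_{vv} M_n,
\end{equation*}
where $M_n$ is the corresponding pullback measure, whose total variation on $[0,T]\times[1,2]\times\R$ is controlled by $|\tilde\mu|([0, T]\times [y_n, 2y_n] \times \R) \to 0$ as $n \to \infty$ since $\tilde\mu$ is finite. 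Applying (a time-dependent variant of) Lemma \ref{lem:averaging1} should then yield strong $L^p_{loc}$ compactness of the velocity averages $\int F_n\psi(v)\,dv$; taking $\psi(v)\equiv 1$ and $\psi(v) = v$ (suitably truncated on the bounded velocity support) in particular delivers strong $L^1$ convergence of the densities $\rho_n(t, \tilde x) = \int F_n\,dv$ and the momenta $m_n$ along a subsequence.

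Using the algebraic identities $\tilde a = m/\rho - \rho/2$ and $\tilde b = m/\rho + \rho/2$, strong convergence of $(\rho_n, m_n)$ determines a unique candidate limit $f^+ = \chi_{[a^+(t),\, b^+(t)]}(v)$, and the elementary bound $\|\chi_{[a_n, b_n]} - \chi_{[a^+, b^+]}\|_{L^1_v} \leq |a_n - a^+| + |b_n - b^+|$ upgrades this to strong $L^1_{t,v}$ convergence of $F_n$ to $f^+$ (the vacuum set $\{\rho^+ = 0\}$ is handled directly, since both sides vanish there). Passing to the limit in the rescaled equation shows $(v - \dot h(t)) \partial_{\tilde x} F^+ = 0$, which forces $F^+$ to be independent of $\tilde x$ and hence equal to $f^+(t, v)$. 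Independence of $f^+$ from the choice of subsequence follows by an interleaving argument: two sequences with distinct cluster points would contradict the compactness just established. The essential-supremum formulation of the proposition is then a standard consequence of strong-along-sequences convergence combined with the indicator structure of $\tilde f$. The $-$-trace is constructed symmetrically by working on $\tilde x \in [-2, -1]$.

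The main obstacle will be the rigorous application of the averaging lemma in the presence of the time-dependent coefficient $\dot h(t)$: Lemma \ref{lem:averaging1} as stated requires the transport speed to depend on $v$ alone, so the merely $L^\infty$ coefficient $\dot h(t)$ must be handled by localizing near Lebesgue points of $\dot h$, applying the averaging lemma to a regularized problem with smoothed $\dot h$, and passing to the limit via commutator estimates. In parallel, one must verify that the nondegeneracy constant survives the $y_n$-rescaling and that the convergence $y_n M_n \to 0$ happens in a norm compatible with the averaging framework (typically via a Sobolev embedding on the bounded measures). These are precisely the analytical subtleties flagged in the paper's proof overview as the new difficulties introduced by taking spatial, rather than temporal, traces in the asymmetric kinetic formulation.
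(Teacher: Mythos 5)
Your proposal diverges from the paper's approach in a fundamental way: you rescale only the spatial variable $\tilde x$ while keeping $t$ at its original scale, whereas the paper blows up hyperbolically in $(\tau, y)$ around a fixed point $(t, h(t))$. This asymmetric rescaling produces the degenerate equation
\begin{equation*}
y_n\,\partial_t F_n + (v - \dot h(t))\,\partial_{\tilde x} F_n = -\,\partial_{vv}\tilde\mu_n,
\end{equation*}
where $\tilde\mu_n$ is the pullback of $\tilde\mu$ (whose total variation does vanish), and here lies the first gap. To invoke Lemma~\ref{lem:averaging1} you must normalize the $\partial_t$ coefficient to $1$; this turns the transport speed into $\frac{v - \dot h(t)}{y_n}$, which depends on $n$ and $t$ and blows up as $n\to\infty$, and turns the source into $-\frac{1}{y_n}\partial_{vv}\tilde\mu_n$, whose total variation is comparable to $\tilde\mu([0,T]\times[y_n,2y_n]\times\R)/y_n$ and need not vanish. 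Thus you face a dichotomy: keep the $y_n$ in front of $\partial_t$ and the kinetic operator degenerates (the nondegeneracy at joint frequencies $|\tau| \gg |\xi|$ fails uniformly, so the $t$-regularity gain is lost), or normalize and the right-hand side fails to go to zero. The time-dependence of $\dot h$ compounds this, because unlike the paper you cannot freeze $\dot h$ near a Lebesgue point: your rescaling is global in $t$. The paper's joint blow-up (Definition of $f_\eta$ in Section~\ref{sec:trace}) avoids all three problems at once, since at a fixed $t$ the coefficient $v - \dot h(t+\eta\tau)$ is nearly constant, the full kinetic operator is preserved with unit $\partial_\tau$ coefficient, and a carefully chosen subsequence $\eta_n$ (Lemma~\ref{lem:trace3}) makes the rescaled entropy dissipation vanish.

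Separately, your argument is missing the weak trace step entirely (the paper's Step 1, Proposition~\ref{prop:trace1} and Lemma~\ref{lem:trace1}, where a BV-in-$y$ argument testing \eqref{eqn:kinetic} against $\Phi(t,v)\psi(x-h(t))$ yields a distributional one-sided limit $f^\pm$ as $y\to 0^\pm$, not merely along a subsequence). Without it, the ``interleaving argument'' does not deliver uniqueness: compactness only says every subsequence of $\{F_n\}$ has a convergent further subsequence, not that all cluster points coincide. And even if you fix a candidate limit, what your rescaling produces at best is $L^1$ convergence averaged over the annulus $y \in [y_n, 2y_n]$, i.e.
\begin{equation*}
\frac{1}{y_n}\int_0^T\int_{y_n}^{2y_n}\int_{-L}^L \bigl|\tilde f(t,y,v) - f^+(t,v)\bigr|\, dv\, dy\, dt \longrightarrow 0,
\end{equation*}
which is strictly weaker than the $\esssup_{0<y<1/n}$ statement in the proposition. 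The paper bridges this gap precisely by combining the weak trace (which gives convergence for every $y\to 0^+$ in $\Omega$, not just a subsequence) with the observation that the trace, once known to be an indicator of an interval by the blow-up argument, must be attained strongly, since weak convergence of indicator functions to an indicator is automatically $L^1$-strong. You would need to reinstate that BV argument, or a substitute, before the upgrade to the essential-supremum statement is legitimate.
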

We prove first that solutions to \eqref{eqn:kinetic} have traces in a weak topology, in particular, we obtain our trace functions $f^\pm(t,v)$. Next, we employ the blow-up method first introduced for the study of kinetic formulations of conservation laws in \cite{vasseur_kinetic}. That is, we blow-up around a fixed $(t,h(t))$ and use Lemma \ref{lem:averaging1} to conclude strong compactness of the blow-up family. Finally, we identify the strong limit as the weak traces $f^\pm$ and conclude $f^\pm$ are obtained is in the desired topology.

\medskip

\noindent{\underline {\bf {Step 1: Weak traces}}}

\medskip

We begin by showing $f$ has weak traces along a fixed Lipschitz curve $h:[0,T]\rightarrow \R$. Results on weak traces in the regular deformable boundary framework date back to Chen and Frid in \cite{chen_frid}. 
For the sake of completeness, we give here a simple direct proof.
\begin{proposition}\label{prop:trace1}
Say $f \in L^\infty$ is a solution to \eqref{eqn:kinetic}. Then, there exists a full measure set, $\Omega \subset \R$, and functions, $f^+,f^-\in L^\infty([0,T]\times \R)$, with $0\le f^\pm \le 1$, such that for any test function $\Phi\in C^\infty_c([0,T]\times \R)$,
\begin{equation}\label{eqn:lim_desired1}
\lim_{y \rightarrow 0^\pm, y\in \Omega}\int_0^T\int_{\R} f(t, h(t) \pm y,v)\Phi(t,v) \; dvdt = \int_0^T\int_{\R} f^\pm(t,v)\Phi(t,v) \; dvdt
\end{equation} 
\end{proposition}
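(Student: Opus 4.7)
The plan is to adapt Vasseur's time-trace argument to the present space-trace setting in four steps. First, I would straighten the curve by the Lipschitz change of variables $(t,x)\mapsto (s,z):=(t, x-h(t))$, which has unit Jacobian. Setting $\tilde f(s,z,v):=f(s,h(s)+z,v)$ and letting $\tilde\mu$ be the corresponding pushforward of the defect measure from Theorem \ref{thm:kinetic}, a routine distributional computation transforms \eqref{eqn:kinetic} into
\begin{equation}
\partial_s \tilde f + (v-\dot h(s))\,\partial_z \tilde f = -\partial_{vv}\tilde \mu \qquad \text{in } \mathcal{D}'([0,T]\times \R_z \times \R_v),
\end{equation}
which is meaningful because $\dot h\in L^\infty([0,T])$ enters only as a measurable coefficient on the transport term. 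The trace at $x=h(t)$ then becomes the trace of $\tilde f$ at $z=0^\pm$.

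Second, for each $\Phi\in C_c^\infty([0,T]\times \R_v)$ I would form
\begin{equation}
u_\Phi(z):= \int (v-\dot h(s))\,\tilde f(s,z,v)\,\Phi(s,v)\,dsdv
\end{equation}
and test the transformed equation against $\Phi(s,v)\psi(z)$ with $\psi\in C_c^\infty(\R_z)$. Integration by parts yields, distributionally in $z$,
\begin{equation}
\partial_z u_\Phi = w_\Phi - \nu_\Phi,
\end{equation}
where $w_\Phi(z):=\int \tilde f\,\partial_s\Phi\,dsdv\in L^\infty(\R_z)$ and $\nu_\Phi$ is the finite signed measure on $\R_z$ defined by $\langle \nu_\Phi,\phi\rangle:=\int \phi(z)\partial_{vv}\Phi(s,v)\,d\tilde\mu$. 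Hence $u_\Phi\in BV_{\mathrm{loc}}(\R_z)$ and admits one-sided limits $u_\Phi(0^\pm)$. The uniform estimate $|u_\Phi(z)|\lesssim \|\Phi\|_{L^1}$ combined with an $\varepsilon/3$-approximation extends $\Phi\mapsto u_\Phi(0^\pm)$ to a bounded linear functional on $L^1$; Riesz representation produces $\tilde g^\pm\in L^\infty$ with $u_\Phi(0^\pm)=\int \tilde g^\pm\Phi\,dsdv$, and the pointwise convergence $u_\Phi(z)\to u_\Phi(0^\pm)$ persists for all $\Phi\in L^1$.

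Third, I would define the candidate trace
\begin{equation}
\tilde f^\pm(s,v):=\frac{\tilde g^\pm(s,v)}{v-\dot h(s)} \qquad \text{on the full-measure set } \{v\neq \dot h(s)\},
\end{equation}
extended arbitrarily on the Lebesgue-null graph $\{v=\dot h(s)\}$. Testing the sign and the upper bound $0\le (v-\dot h)\tilde f\le |v-\dot h|$ against nonnegative $\Phi$ supported in $\{v>\dot h(s)\}$ and in $\{v<\dot h(s)\}$ gives $0\le\tilde f^\pm\le 1$ almost everywhere, so $\tilde f^\pm\in L^1\cap L^\infty([0,T]\times\R)$.

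Finally, I would upgrade to the weak-$*$ convergence $\tilde f(\cdot,z,\cdot)\weakstar \tilde f^\pm$ in $L^\infty$ as $z\to 0^\pm$. Any sequence $z_n\to 0^+$ admits a subsequence along which $\tilde f(\cdot,z_{n_k},\cdot)\weakstar F$ by weak-$*$ compactness. For $\Phi\in L^1$ with support in $\{|v-\dot h(s)|\ge \varepsilon\}$, the ratio $\Phi/(v-\dot h)\in L^1$ is a legitimate test function for the full-sequence convergence $(v-\dot h)\tilde f(\cdot,z,\cdot)\weakstar \tilde g^+$, forcing $\int F\Phi\,dsdv = \int \tilde f^+\Phi\,dsdv$; since such $\Phi$ exhaust $L^1(\{v\neq \dot h(s)\})$ as $\varepsilon\to 0$, one concludes $F=\tilde f^+$ a.e. Uniqueness of subsequential limits yields full-sequence convergence, the $z\to 0^-$ case is symmetric, and choosing $\Omega=\R\setminus\{0\}$ completes the proof. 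The main obstacle is the degeneracy of $v-\dot h(s)$ along $\{v=\dot h(s)\}$ when inverting from the BV-regular quantity $(v-\dot h)\tilde f$ back to $\tilde f$; the key observation is that this graph is Lebesgue-null in $(s,v)$ and hence invisible to the $L^1$-$L^\infty$ pairing, so no regularity on $\dot h$ beyond the $L^\infty$ bound afforded by the Lipschitz assumption on $h$ is required.
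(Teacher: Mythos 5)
Your proposal is correct and follows essentially the same route as the paper's own proof (Lemma~\ref{lem:trace1} followed by the $A_\eps$--truncation argument): you test the kinetic equation in the straightened coordinates to obtain BV-in-$z$ regularity of $u_\Phi(z)=\int (v-\dot h(s))\tilde f\,\Phi\,ds\,dv$, pass to $L^\infty$ weak-$*$ traces $\tilde g^\pm$, divide by $v-\dot h(s)$ to define the candidate trace, and recover full weak-$*$ convergence of $\tilde f(\cdot,z,\cdot)$ by testing on $\Phi$ supported away from the Lebesgue-null graph $\{v=\dot h(s)\}$ and sending $\eps\to 0$. One small slip at the end: $\Omega$ cannot be taken to be $\R\setminus\{0\}$, since the pointwise values $u_\Phi(z)$ only coincide with those of the precise BV representative for $z$ in a full-measure set; as in the paper, $\Omega$ should be the (still full-measure) intersection of these sets over a countable dense family $D\subset C^\infty_c$, which is exactly what guarantees the limits in \eqref{eqn:lim_desired1} are taken along a fixed $\Phi$-independent set of scales.
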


Let us show the following partial result, which says that the map $y\mapsto (v-\dot{h}(t))f(t,h(t) + y, v)$ has left and right limits, in the sense of distributions. More precisely, we have:
\begin{lemma}\label{lem:trace1}
Suppose $f \in L^\infty$ is a solution to \eqref{eqn:kinetic}. Then, there is a set $\Omega \subset \R$ of full measure and functions $g^\pm \in L^\infty([0,T]\times \R)$ such that for any $\Phi \in L^1([0,T] \times \R)$ with essentially compact support,
\begin{equation}\label{eqn:lim_desired2}
\lim_{y \rightarrow 0^\pm, y\in\Omega} H_\Phi(z) = \lim_{y\rightarrow 0^\pm, y\in \Omega} \int_0^T\int_{\R} [v-\dot{h}(t)]f(t,h(t) + y, v)\Phi(t,v) \;dvdt = \int_0^T\int_{\R} g^\pm(t,v) \Phi(t,v) \;dvdt.
\end{equation}
\end{lemma}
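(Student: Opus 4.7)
The plan is to straighten the curve so that the desired trace becomes a one-sided trace across the hyperplane $\{y=0\}$, and then exploit the kinetic equation \eqref{eqn:kinetic} to deduce $BV$ regularity in $y$ of the flux pairing against any test function $\Phi$.

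Set $\tilde f(t,y,v) := f(t, y+h(t), v)$ and let $\tilde\mu$ be the corresponding pushforward measure, which remains of finite total variation. The chain rule transforms \eqref{eqn:kinetic} into
\begin{equation*}
\partial_t\tilde f + (v-\dot h(t))\partial_y\tilde f = -\partial_{vv}\tilde\mu,
\end{equation*}
and since $v-\dot h(t)$ is independent of $y$, this is equivalent to the distributional identity
\begin{equation*}
\partial_y\bigl[(v-\dot h(t))\tilde f\bigr] = -\partial_t\tilde f - \partial_{vv}\tilde\mu
\end{equation*}
on $(0,T)\times\R\times\R$. For $\Phi \in C_c^\infty((0,T)\times\R)$ I introduce
\begin{equation*}
H_\Phi(y) := \int (v-\dot h(t))\tilde f(t,y,v)\Phi(t,v)\,dtdv
\end{equation*}
and test the identity above against $\Phi(t,v)\psi(y)$ with $\psi \in C_c^\infty(\R)$. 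Integration by parts (all boundary terms vanish by compact support) yields
\begin{equation*}
-\int H_\Phi(y)\psi'(y)\,dy = \int \tilde f\,\partial_t\Phi\,\psi\,dtdydv \;-\; \int \partial_{vv}\Phi\,\psi\,d\tilde\mu,
\end{equation*}
which identifies $\partial_y H_\Phi$ as the sum of an $L^\infty_y$ function (with norm $\le \|\tilde f\|_\infty\|\partial_t\Phi\|_{L^1_{t,v}}$) and a finite signed measure in $y$ (of total variation $\le \|\partial_{vv}\Phi\|_\infty\|\tilde\mu\|_{TV}$). Thus $H_\Phi$ is locally $BV$ on $\R$, so the one-sided limits $H_\Phi(0^\pm)$ exist. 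Separately, the support property $\supp\tilde f(t,y,\cdot)\subset[-L,L]$ from \eqref{eqn:kinetic} yields the uniform pointwise bound $|H_\Phi(y)| \le C(L,\|\dot h\|_\infty)\|f\|_\infty\|\Phi\|_{L^1}$, which does \emph{not} involve any derivative of $\Phi$.

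The linear functionals $\ell^\pm : \Phi \mapsto H_\Phi(0^\pm)$ on $C_c^\infty$ are continuous in the $L^1$ norm by this pointwise bound, so density plus Riesz representation produces $g^\pm \in L^\infty([0,T]\times\R)$ with $\ell^\pm(\Phi) = \int g^\pm \Phi\,dtdv$ and $\|g^\pm\|_\infty \le C\|f\|_\infty$. To upgrade the convergence $H_\Phi(y) \to \ell^\pm(\Phi)$ to arbitrary $\Phi \in L^1$ with essentially compact support, I will approximate $\Phi$ by smooth $\Phi_k \to \Phi$ in $L^1$; the uniform-in-$y$ estimate $|H_\Phi(y) - H_{\Phi_k}(y)| \le C\|f\|_\infty\|\Phi-\Phi_k\|_{L^1}$ lets me swap the limits $k\to\infty$ and $y\to 0^\pm$. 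The full-measure $\Omega \subset \R$ is then defined as the set of $y$ at which the Fubini slice $\tilde f(\cdot, y, \cdot)$ is a genuine fixed $L^\infty$ function, so that $H_\Phi(y)$ is an unambiguous number for each $y \in \Omega$ and every $\Phi$.

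I expect the main obstacle to be arranging a \emph{single} full-measure $\Omega$ that works simultaneously for all $\Phi$. Relying on the local $BV$-ness of $H_\Phi$ alone one would produce a $\Phi$-dependent countable jump set, and the $BV$ seminorm blows up in $\Phi$ through its derivatives, which is the wrong dependence for density arguments. The remedy is to reduce by density to a countable family of smooth $\Phi$, extract $\Omega$ once from the exceptional null sets of that family, and then propagate from $C_c^\infty$ to $L^1$ using the $L^1 \to L^\infty_y$ continuity $\Phi \mapsto H_\Phi(\cdot)$ rather than the $BV$ estimate --- it is precisely the fact that the pointwise bound involves only $\|\Phi\|_{L^1}$ and not derivatives that closes the extension. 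A secondary technical point is verifying that testing $\partial_{vv}\tilde\mu$ against $\Phi(t,v)\psi(y)$ really produces a finite measure in $y$ of the claimed total variation, which comes down to $\partial_{vv}\Phi \in L^\infty$ and the global finiteness of $\tilde\mu$.
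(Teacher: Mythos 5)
Your proposal is correct and follows essentially the same route as the paper: straighten the curve, test the kinetic equation against $\Phi(t,v)\psi(y)$ to obtain $BV$ regularity of $H_\Phi$ in $y$, establish the $\Phi$-uniform pointwise bound $|H_\Phi(y)|\lesssim\|\Phi\|_{L^1}$, reduce to a countable dense family $D\subset C_c^\infty$ to fix one full-measure $\Omega$, and propagate from $D$ to all of $L^1$ using the $L^1\to L^\infty_y$ bound. The only cosmetic difference is that you identify $g^\pm$ by Riesz representation applied to $\ell^\pm$, whereas the paper extracts $g^\pm$ by weak-$*$ compactness of the bounded family $y\mapsto(v-\dot h(t))f(t,h(t)+y,v)$ in $L^\infty$ and then matches it against the $BV$ limits; the two are interchangeable here.
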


\begin{proof}
We prove \eqref{eqn:lim_desired2} only for right limits. The proof for left limits is identical.
First, fix $\Phi \in C^\infty_c([0,T]\times \R)$ and $\psi\in C^\infty_c(\R)$ with $\|\psi\|_{L^\infty} \le 1$ and take $\Psi(t,x,v) = \Phi(t,v)\psi(x - h(t))$. Since $\Psi$ is compactly supported, Lipschitz in $t$, and smooth in $x$ and $v$, $\Psi$ is an admissible test function for \eqref{eqn:kinetic}. Testing \eqref{eqn:kinetic} with $\Psi$ yields
\begin{equation}
\begin{aligned}
\int_{\R^2\times [0,T]} \partial_t \Phi(t,v)\psi(x - h(t))f(t,x,v) \  &+ \ [v-\dot{h}(t)]\Phi(t,v)\psi^\prime(x-h(t))f(t,x,v) \ ;dvdxdt\\
	& = \int_{\R^2\times [0,T]} \partial_{vv}\Phi(t,v)\psi(x-h(t)) \ d\mu(t,x,v).
\end{aligned}
\end{equation}
Changing variables so that $y = x-h(t)$, we obtain
\begin{equation}
\begin{aligned}
\int_{\R^2\times[0,T]} [v-\dot{h}(t)]\Phi(t,v)\psi^\prime(y)f(t,h(t) + y,v) \;dvdydt = &- \int_{\R^2\times[0,T]} \partial_t \Phi(t,v)\psi(y)f(t,h(t) + y,v) \;dvdydt\\
	&+\int_{\R^2\times[0,T]} \partial_{vv}\Phi(t,v)\psi(x-h(t)) \ d\mu(t,x.v).
\end{aligned}
\end{equation}
Defining $H_{\Phi}(y) \coloneqq \int [v-\dot{h}(t)]f(t,h(t) + y,v)\Phi(t,v) \ dtdv$, since $\|\psi\|_{L^\infty},\|f\|_{L^\infty} \le 1$ and $\mu$ is a finite measure, we obtain
\begin{equation}
\int_{\R} \psi(y) H_{\Phi}(y) \;dy \le C(\mu,\Phi) \|\psi\|_{L^\infty} \le C(\mu,\Phi).
\end{equation}
By a standard characterization of $BV$ functions, $H_{\Phi}(z)$ is of bounded variation and therefore has left and right essential limits for any $y$ and any $\Phi\in C^\infty_c([0,T]\times \R)$. Now, fix $D \subset C^\infty_c([0,T]\times \R)$ a countable dense subset of $L^1([0,T]\times \R)$. Since $D$ is countable, there is a full measure subset of $\Omega$, still denoted $\Omega$, such that
\begin{equation}\label{eqn:trace_convergence1}
\lim_{y \rightarrow 0^+, y \in \Omega} H_{\Phi}(y) \quad\text{exists for any }\Phi\in D.
\end{equation}
Second, we note the functions $\set{(t,v) \mapsto [v-\dot{h}(t)]f(t,h(t) + y, v) \ | \ y\in \R}$ are uniformly bounded in $L^\infty([0,T]\times \R)$. Thus, there is a sequence $y_n \rightarrow 0^+$ with $y_n \in \Omega$ and function $g^+\in L^\infty([0,T]\times \R)$ such that for every $\Phi \in L^1([0,T]\times \R)$,
\begin{equation}\label{eqn:trace_compactness}
\lim_{n\rightarrow \infty} H_{\Phi}(y_n) = \int_0^T\int_{\R} g^+(t,v)\Phi(t,v) \ dvdt, \quad\text{for every }\Phi\in L^1([0,T]\times\R).
\end{equation}
Thus, \eqref{eqn:trace_convergence1} and \eqref{eqn:trace_compactness} together imply
\begin{equation}\label{eqn:trace_convergence2}
\lim_{y \rightarrow 0^+, y\in \Omega} H_{\Phi}(y) = \int_0^T\int_{\R} g^+(t,v)\Phi(t,v) \ dvdt, \quad \text{for every }\Phi \in D.
\end{equation}
Third, we will conclude \eqref{eqn:trace_convergence2} holds for any $\Phi \in L^1([0,T]\times \R)$ with essentially compact support.
Since $f$ is uniformly bounded and $h$ is Lipschitz, we have the uniform (in $y$) bound,
\begin{equation}\label{eqn:Hbound}
|H_\Phi(y)| \le \|f\|_{L^\infty([0,T]\times \R\times \R)}\|v-\dot{h}\|_{L^\infty(supp(\Phi))}\|\Phi\|_{L^1}.
\end{equation}
Therefore, a density argument using \eqref{eqn:trace_convergence2}, $D$ is a dense subset of $L^1([0,T]\times \R)$, and \eqref{eqn:Hbound}, concludes the proof.
\end{proof}

We are now ready to prove Proposition \ref{prop:trace1} by showing $f^\pm(t,v) = \left[v-\dot{h}(t)\right]^{-1}g^\pm(t,v)$:

\begin{proof}[Proof of Proposition \ref{prop:trace1}.]
Again, we prove \eqref{eqn:lim_desired1} only for right limits.
First, we fix $\Omega$ from Lemma \ref{lem:trace1} and note that the functions $\set{y \mapsto f(t,h(t) + y,v) \ | \ y \in \R}$ are uniformly bounded in $L^\infty([0,T]\times \R)$. Thus, it follows that there is a sequence $y_n \rightarrow 0^+$ with $y_n \in \Omega$ and a function $(t,v) \mapsto f^+(t,v)$, for which $f(t,h(t) + y_n,v) \weakstar f^+(t,v)$ in $L^\infty([0,T]\times \R)$. 
Second, for each $\eps > 0$, define the family of domains
\begin{equation}\label{defn:A}
A_{\eps}:= \set{(t,v) \ \biggr| \ 0 \le t \le T, \dot{h}(t) \text{ exists, and } |\dot{h}(t) - v| \le \eps}.
\end{equation}
By Rademacher's theorem and Fubini's theorem, $A_{\eps}$ is measurable with measure $|A_{\eps}| = 2\eps T$. We note that if $\Phi\in L^1([0,T] \times \R)$ has essentially compact support $K$ contained in $[0,T]\times [-L,L]$ with $|K\cap A_{\eps}| = 0$ for some $\eps >0$, applying Lemma \ref{lem:trace1} to $\frac{\Phi(t,v)}{\dot{h}(t) - v}$ and using $f(t,h(t) + y_n,v) \weakstar f^+(t,v)$ in $L^\infty([0,T]\times\R)$,
\begin{equation}
\lim_{y \rightarrow 0^+, y\in\Omega} \int_0^T\int_{\R} f(t,h(t) + y,v) \Phi(t,v) \ dvdt = \int_0^T\int_{\R} f^+(t, v)\Phi(t,v) \ dvdt.
\end{equation}
In other words, for any $L > 0$ and $\eps > 0$, $f(t,h(t) + y,v) \weakstar f^+(t,v)$ in $L^\infty([0,T]\times[-L,L] \setminus A_{\eps})$. 
Third, fix $\Phi \in L^1([0,T]\times \R)$. Then, take a sequence $\eps_n \rightarrow 0^+$ and define $\Phi_n(t,v) = \chi_{[0,T]\times[-n,n]}(1 - \chi_{A_{\eps_n}}(t,v))\Phi(t,v)$. Then,
\begin{equation}
\lim_{y\rightarrow 0^+, y\in \Omega} \int_0^T\int_{\R} \Phi_n(t,v)f(t,h(t)+ y,v)\;dvdt = \int_0^T\int_{\R} \Phi_n(t,v)f^+(t,v)\;dvdt, \quad \text{for each }n\in\mathbb{N}.
\end{equation}
Moreover, since $|A_{\eps}| = 2T\eps$, $\Phi_n \rightarrow \Phi$ in $L^1([0,T]\times \R)$. Thus, for $z_1, z_2 > 0$,
\begin{equation}
\begin{aligned}
&\left|\int_0^T\int_{\R} \Phi(t,v)\left[f(t,h(t)+ z_1,v) - f(t,h(t) + z_2)\right]\ dvdt\right| \\
 &\qquad\qquad\qquad\le2\|\Phi - \Phi_n\|_{L^1} + \left|\int_0^T\int_{\R} \Phi_n(t,v)\left[f(t,h(t)+ z_1,v) - f(t,h(t) + z_2)\right]\;dvdt\right|.
\end{aligned}
\end{equation}
Now, for any $\eta > 0$, pick $n$ sufficiently large so that $\|\Phi - \Phi_n\|_{L^1} < \eta/2$. Then, since for a fixed $n$, the second term is Cauchy, the left hand side is bounded by $\eta$ provided that $z_1,z_2$ are sufficiently small. Combined with $f(t,h(t) + y_n,v) \weakstar f^+(t,v)$ in $L^\infty([0,T]\times \R)$, we obtain
\begin{equation}
\lim_{y\rightarrow 0^+, y\in \Omega} \int_0^T\int_{\R} \Phi(t,v)f(t,h(t)+ y,v)\ dvdt = \int_0^T\int_{\R} \Phi(t,v)f^+(t,v) \ dvdt, \quad \text{for each }\Phi\in L^1([0,T]\times\R)
\end{equation}
and we conclude $f(t,h(t) + y,v) \weakstar f^+(t,v)$ in $L^\infty([0,T]\times \R)$. 
In particular, this means that $f(t,h(t) + y,v)$ obtains its trace function $f^+(t,v)$ in the sense of distributions in $t,v$ and weak star in $L^\infty([0,T]\times\R)$. By lower semicontinuity of norms with respect to weak convergence, $f^+ \in L^\infty$ with $|f^+| \le 1$. Finally, since $\int f^+ \Phi \;dtdv \ge 0$ for any $\Phi(t,v) \ge 0$, $f^+ \ge 0$ almost everywhere.
\end{proof}

\medskip

\noindent{\underline {\bf {Step 2: Strong convergence of blow-ups}}}

\medskip

In this step, we will show that there is a sequence of scales $r_n \rightarrow 0^+$, so that blowing up $f$ around a point $(t,h(t))$ at scales $r_n$, we obtain a strong $L^1_{loc}$ limit, $f^\infty$, simultaneously for almost every point $(t,h(t))$. Moreover, our strong limit $f^\infty$ will solve \eqref{eqn:kinetic} with $\mu = 0$, except possibly along the plane $\{y = 0\}$. The proof consists of using averaging lemmas for the kinetic formulation \eqref{eqn:kinetic} to gain compactness for averages of $f$. Since $f$ is already weakly compact and is almost everywhere the characteristic function of an interval in $v$, this will be sufficient deduce strong compactness for $f$ as well.
\begin{definition}[Blow-up along a curve]
For $(f,\mu)$ a solution pair to \eqref{eqn:kinetic}, $t \mapsto h(t)$ a fixed Lipschitz curve, and $(t,h(t))$ a fixed point on $h$, we define the blow-up $(f_r, \mu_r)$ for $r$ sufficiently small (depending on $t$) by
\begin{equation}\label{defn:rescaling}
\begin{aligned}
f_r(\tau,y,v) &\coloneqq f(t + r\tau,h(t + r\tau) + r y, v)\\
\mu_r(\tau,y,v) &\coloneqq r\mu(t + r\tau,h(t + r\tau) + ry, v).
\end{aligned}
\end{equation}
\end{definition}

Note that the domain of $f_r$ is $[-\frac{t}{r},\frac{T}{r}]\times \R_y \times \R_v$, which converges to $\R_\tau \times \R_y \times \R_v$ as $r \rightarrow 0^+$.
We define also the following sets for $\T > 0$ arbitrary, $D_{\T} = [-\T,\T]\times [-1,1] \times [-L,L]$ and $D_{\T}^+ = D_{\T} \cap \set{y > 0}$ and $D_{\T}^- = D_{\T} \cap \set{y < 0}$, where $L$ is a fixed bound on the support of $f(t,x,\cdot)$.
Definition \ref{defn:rescaling} is meant in the sense of distributions so that, by scaling, $f_r,\mu_r$ almost satisfy \eqref{eqn:kinetic} in the microscopic $(\tau, y, v)$ variables. Finally, fix $J_r$ as the Lipschitz map
$$J_r(\tau,y,v) \coloneqq (t + r\tau, h(t + r\tau) + ry, v),$$
so that $\mu_r(A) = \frac{1}{r} \mu(J_r(A))$ for any $A \subset [-\frac{t}{r},\frac{T}{r}]\times \R_y \times \R_v$.

\begin{proposition}\label{prop:trace2}
For $f$ a solution to \eqref{eqn:kinetic}, there is a sequence $r_n \rightarrow 0$, such that for almost every $t\in [0,T]$, for $f_n = f_{r_n}$, there exist $f_\infty^+$ and $f_\infty^-$, characteristic functions of intervals (in $v$) for almost every $(\tau,y)$, such that for each $\T > 0$,
\begin{equation}
\lim_{n\rightarrow \infty} \int_{D_{\T}^+} |f_\infty^+(\tau,y,v) - f_n(\tau,y,v)| \;dvdyd\tau + \int_{D_{\T}^-} |f_\infty^-(\tau,y,v) - f_n(\tau,y,v)| \;dvdyd\tau = 0.
\end{equation}
Furthermore, the limit functions $f_\infty^\pm$ satisfy the equations
\begin{equation}\label{eqn:limit}
\partial_{\tau} f_\infty^\pm + \left(v - \dot{h}(t)\right)\partial_{y} f_\infty^\pm = 0 \qquad \text{for }(\tau,y,v)\in \bigcup_{\T > 0} D^\pm_{\T}
\end{equation}
\end{proposition}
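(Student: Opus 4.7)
\textbf{Plan for Proposition \ref{prop:trace2}.} I plan to follow Vasseur's strategy from \cite{vasseur_kinetic} for time traces, adapted to the time-like blow-ups along $h$. The core idea is to use the averaging lemma (Lemma \ref{lem:averaging1}) to upgrade weak-$\ast$ $L^\infty$ compactness of $f_\eta$ to strong $L^1$ compactness of velocity averages, and then exploit that $f_\eta = \chi_{[a_\eta,b_\eta]}$ is determined by its first two velocity moments to transfer the strong compactness to $f_\eta$ itself. The blow-up measure satisfies $|\mu_\eta|(A) = \eta^{-1}|\mu|(J_\eta(A))$, and a Fubini swap in $t_0$ gives the two key estimates
\begin{equation*}
\int_0^T |\mu_\eta|(D_\T)\,dt_0 \le 2\T\,|\mu|(\R^3) \qquad\text{and}\qquad \int_0^T |\mu_\eta|(D_\T \cap \{|y|>\delta\})\,dt_0 \xrightarrow{\eta\to 0^+} 0
\end{equation*}
for every fixed $\delta > 0$ (the second holds by dominated convergence since the corresponding indicator $\chi_{\{|x-h(s)| \in [\delta\eta,\eta]\}}$ in the original variables vanishes pointwise and is dominated by an $|\mu|$-integrable function). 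A diagonal extraction over countable sequences $\delta_k \to 0^+$ and $\T_j \to \infty$ then yields a deterministic sequence $\eta_n \to 0^+$ such that, for a.e. $t_0 \in [0,T]$, the masses $|\mu_{\eta_n}|(D_{\T_j})$ remain bounded while $|\mu_{\eta_n}|(D_{\T_j} \cap \{|y| > \delta_k\}) \to 0$ for every $j,k$; consequently any weak limit of $\mu_{\eta_n}$ on $D_\T$ is supported in $\{y=0\}$.

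For the compactness of $f_{\eta_n}$, I would rewrite the rescaled equation as
\begin{equation*}
\partial_\tau f_\eta + v\,\partial_y f_\eta = \partial_y\bigl(\dot h(t_0+\eta\tau)\,f_\eta\bigr) - \partial_{vv}\mu_\eta,
\end{equation*}
putting the LHS in the constant-drift form required by Lemma \ref{lem:averaging1}, with $a(v) = v$ satisfying the nondegeneracy \eqref{eqn:nondegenerate}. By the boundedness of $f_\eta$ and $|\mu_\eta|$ together with standard Sobolev/Rellich arguments on bounded sets, the RHS can be written as $(1-\Delta)^{1/2}(1-\partial_{vv})^{r/2} g_\eta$ with $\{g_\eta\}$ relatively compact in $L^p_{loc}$ for some $1 < p < 3/2$. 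The averaging lemma then gives strong $L^p_{loc}(D_\T^\pm)$ compactness of the velocity averages $\int f_\eta \psi(v)\,dv$ for each $\psi \in C_c^\infty(\R)$, and after further extractions a single sequence works simultaneously for countably many $\psi$.

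To upgrade to strong compactness of $f_{\eta_n}$ itself, I would use that $f_\eta = \chi_{[a_\eta,b_\eta]}$ gives $b_\eta - a_\eta = \rho_\eta = \int f_\eta\,dv$ and $b_\eta^2 - a_\eta^2 = 2 m_\eta = 2\int v f_\eta\,dv$. Strong convergence of $(\rho_{\eta_n}, m_{\eta_n})$ then forces pointwise-a.e. convergence of $(a_{\eta_n}, b_{\eta_n})$, hence strong $L^1(D_\T^\pm)$ convergence of $f_{\eta_n}$ to $f_\infty^\pm := \chi_{[a_\infty^\pm, b_\infty^\pm]}$, a characteristic function of an interval in $v$ almost everywhere. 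Passing to the limit in the equation on the open sets $D_\T^\pm$ is then direct: $\dot h(t_0+\eta_n\tau) \to \dot h(t_0)$ at Lebesgue points of $\dot h$, and the weak limit of $\mu_{\eta_n}$ on $D_\T$ lives in $\{y=0\}$, which misses $D_\T^\pm$. The distributional equation therefore collapses to the free transport equation \eqref{eqn:limit} on $D_\T^\pm$.

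The main obstacle is producing a single deterministic sequence $\eta_n \to 0^+$ that delivers all of the above convergences for almost every $t_0$ simultaneously, since the averaging lemma is naturally applied fiberwise in $t_0$. I would handle this by viewing $(f_\eta, \mu_\eta)$ as a family parametrized over $t_0 \in [0,T]$, combining the fiberwise strong compactness with weak-$\ast$ compactness of the parametrized family in $L^\infty([0,T];L^\infty(D_\T^\pm)) \times \mathcal{M}([0,T] \times D_\T^\pm)$, and then performing a standard Fubini/Egorov diagonal extraction to obtain pointwise-$t_0$ convergence. A secondary technical issue is the low regularity of $\dot h$ (only bounded measurable); writing the drift as $\partial_y(\dot h f_\eta)$ on the right-hand side keeps the hypotheses of Lemma \ref{lem:averaging1} intact without requiring smoothness of $h$.
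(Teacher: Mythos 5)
Your outline follows the paper's strategy closely (weak measure estimates via Fubini, averaging lemma, moment determination of $\chi_{[a,b]}$ via Lemma \ref{lem:compactness}), but there is a genuine gap in the compactness step where you invoke the averaging lemma.

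Lemma \ref{lem:averaging1} requires the right-hand side, after applying $(1-\Delta)^{-1/2}(1-\partial_{vv})^{-r/2}$, to form a \emph{compact} family $\{g_\eta\}$ in $L^p_{loc}$, not merely a bounded one. With your choice $a(v)=v$, you place $\partial_y\bigl(\dot h(t_0+\eta\tau)f_\eta\bigr)$ on the right. But $\dot h(t_0+\eta\tau)f_\eta$ is only bounded in $L^\infty$: $f_\eta$ is precisely the sequence whose compactness is to be established, and $\dot h(t_0+\eta\tau)$ does not vanish. Applying the regularizing operator yields a sequence bounded in $L^2_{loc}$ that has no reason to be relatively compact (extra $v$-regularity alone does not give compactness in the $(\tau,y)$ variables; think of $e^{in\tau}\phi(v)$). ``Standard Sobolev/Rellich arguments'' cannot produce compactness from mere boundedness here. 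The paper's argument resolves this by taking $a(v)=v-\dot h(t_0)$, so the residual drift on the right is $\bigl(\dot h(t_0+\eta\tau)-\dot h(t_0)\bigr)\partial_y f_\eta$, and the \emph{coefficient} $\dot h(t_0+\eta\tau)-\dot h(t_0)$ tends to $0$ in $L^2_\tau([-\T,\T])$ for Lebesgue points $t_0$ of $\dot h$. This kills the drift term in $L^2(D^\pm_\T)$, making it trivially compact. Your choice $a(v)=v$ forgoes this cancellation.

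A second, related issue: you only claim $\int_0^T |\mu_\eta|(D_\T)\,dt_0 \lesssim_\T 1$ (boundedness) and vanishing of $|\mu_\eta|(D_\T\cap\{|y|>\delta\})$. For the averaging lemma one needs the regularized measure term to be compact in $L^p(D_\T^\pm)$, and the paper obtains this from $\mu_{\eta_n}(D_\T^\pm)\to 0$ (Lemma \ref{lem:trace3}), which follows because $D_\T^\pm$ excludes $y=0$ and $J_\eta(D^\pm_\T)$ already shrinks to a $\mu$-null set. Boundedness of $\|\mu_\eta\|_{TV(D_\T)}$ plus weak concentration on $\{y=0\}$ is enough to pass to the limit in the equation on $D^\pm_\T$, but it is \emph{not} enough to make $(1+\Delta)^{-1/2}(1+\partial_{vv})^{-3/2}\partial_{vv}\mu_\eta$ compact. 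You should estimate $\mu_\eta$ directly on $D^\pm_\T$ (as the paper does) rather than on $D_\T$, so that the total variation there tends to $0$.

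With both of these corrected---subtracting $\dot h(t_0)$ in the drift and proving $\mu_{\eta_n}(D^\pm_\T)\to 0$---your argument aligns with the paper's proof.
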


Let us start with the following lemma concerning the structure of traces for the rescaled solutions:
\begin{lemma}\label{lem:trace2}
For $f$ a solution to \eqref{eqn:kinetic}, the rescaled solutions $f_r$ satisfy
\begin{equation}\label{eqn:rescaled}
\partial_{\tau} f_r + \left(v - \dot{h}(t + r \tau)\right)\partial_yf_r = -\partial_{vv}\mu_r.
\end{equation}
Moreover, $f_r$ has weak traces along the line $y = 0$ given by $f_r^\pm(\tau,v) = f^\pm(t + r\tau, v)$. Furthermore, for any sequence $r_n\rightarrow 0^+$, there is a subsequence still denoted $r_n$ such that for almost every $t\in [0,T]$, the rescaled solutions $f_n = f_{r_n}$ satisfy
\begin{equation}\label{eqn:trace_conv0}
\lim_{n\rightarrow \infty} \|f_n^\pm(\tau,v) - f^\pm(t,v)\|_{L^1_{loc}(\R_\tau \times [-L,L])} = 0.
\end{equation}
\end{lemma}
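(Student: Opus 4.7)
The plan has three parts, matching the three assertions of the lemma. First, equation \eqref{eqn:rescaled} is a direct chain-rule computation carried out in the sense of distributions: at the pulled-back point $(t+\eta\tau, h(t+\eta\tau)+\eta y, v)$, one has $\partial_\tau f_\eta = \eta\bigl(\partial_t f + \dot h(t+\eta\tau)\partial_x f\bigr)$ and $\partial_y f_\eta = \eta\, \partial_x f$, so combining them gives $\partial_\tau f_\eta + \bigl(v - \dot h(t+\eta\tau)\bigr)\partial_y f_\eta = \eta(\partial_t + v\partial_x)f = -\eta\,\partial_{vv}\mu$, which matches $-\partial_{vv}\mu_\eta$ under the definition \eqref{defn:rescaling}. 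I would make this precise by testing against a compactly supported $\Phi(\tau, y, v)$ and pulling back via $J_\eta$.

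Second, rather than reproving the weak-trace result of Proposition \ref{prop:trace1} for the drift $v - \dot h(t+\eta\tau)$ appearing in the rescaled equation, I would read off $f_\eta^\pm$ directly from $f^\pm$. Fixing $\Phi(\tau, v) \in C_c^\infty$ and substituting $s = t + \eta\tau$,
\begin{equation*}
\int f_\eta(\tau, y, v)\,\Phi(\tau, v)\, d\tau\, dv = \frac{1}{\eta}\int f\bigl(s, h(s) + \eta y, v\bigr)\,\Phi\!\left(\tfrac{s-t}{\eta}, v\right) ds\, dv.
\end{equation*}
Applying Proposition \ref{prop:trace1} to the $L^1$ test function $(s,v) \mapsto \frac{1}{\eta}\Phi\bigl(\frac{s-t}{\eta},v\bigr)$ and letting $\eta y \to 0^+$ through the full-measure set $\Omega$ identifies $f_\eta^+(\tau, v) = f^+(t+\eta\tau, v)$, with the analogous computation for $f_\eta^-$.

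The main work is in the final assertion \eqref{eqn:trace_conv0}. The key identity, obtained by substituting $s = t + \eta_n\tau$, is
\begin{equation*}
\int_{-\T}^{\T} \|f^\pm(t+\eta_n\tau, \cdot) - f^\pm(t, \cdot)\|_{L^1_v([-L,L])}\, d\tau = \frac{1}{\eta_n}\int_{t-\eta_n\T}^{t+\eta_n\T} \|f^\pm(s, \cdot) - f^\pm(t, \cdot)\|_{L^1_v([-L,L])}\, ds.
\end{equation*}
I would integrate this identity in $t$ over $[0,T]$, swap the order of integration by Fubini, and invoke continuity of translations in $L^1_t([0,T]; L^1_v([-L,L]))$ together with dominated convergence (with integrand bounded by $2\|f^\pm\|_{L^\infty_t L^1_v}$ uniformly in $\tau$) to conclude that the resulting double integral tends to $0$ as $\eta_n \to 0$. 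This forces the quantity in $t$ to tend to $0$ in $L^1([0,T])$, hence pointwise a.e. along some subsequence; diagonalizing over an exhausting sequence $\T_k \to \infty$ then yields a single subsequence that works for almost every $t$ and every $\T$ simultaneously. The only subtlety I anticipate is routine bookkeeping---for each fixed $\eta > 0$, the admissible $y$'s in step two lie in $\eta^{-1}\Omega$, but since $\Omega$ has full measure this causes no trouble, and I do not expect any deeper obstacle beyond careful application of Proposition \ref{prop:trace1} and the standard continuity of $L^1$-translations.
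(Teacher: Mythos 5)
Your proposal is correct and follows essentially the same route as the paper: assert \eqref{eqn:rescaled} via the change of variables $J_\eta$, transfer the weak trace of $f$ to $f_\eta$ by the substitution $s = t + \eta\tau$ together with Proposition \ref{prop:trace1}, and then deduce \eqref{eqn:trace_conv0} from continuity of $L^1$-translations, passing from $L^1_t$ convergence to a.e. pointwise convergence along a subsequence and diagonalizing over an exhausting family of $\T$'s. The only difference is that you spell out the Fubini/dominated-convergence bookkeeping for the translation estimate more explicitly than the paper, which simply defines $F_\eta(t)$ and cites continuity of translations; this is a harmless elaboration, not a genuinely different argument.
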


\begin{proof}
The blow-ups $\{f_r\}_{r > 0}$ satisfy \eqref{eqn:rescaled} because $f$ satisfies \eqref{eqn:kinetic} in the sense of distributions. We prove the convergence \eqref{eqn:trace_conv0} only for the right-hand trace $f^+$. We first note that for $\varphi\in C^\infty_c((-\frac{t}{r},\frac{T}{r})\times (-\infty,\infty))$, Proposition \ref{prop:trace1} combined with the rescaling \eqref{defn:rescaling} guarantees that if $f^+$ denotes the right trace of $f$ along $h(t)$, for a fixed $r > 0$,
\begin{equation}\label{eqn:trace_conv1}
\lim_{y \rightarrow 0^+, y\in r^{-1}\Omega} \int_{\R^2} \left[f_{r}(\tau,y,v) - f^+(t + r\tau,v)\right]\varphi(\tau,v) \;dvd\tau = 0.
\end{equation}
Thus, we conclude $f_r$ has weak traces $f^\pm_r(\tau,v)$ along the line $y = 0$ and $f_r^+(\tau,v) = f^+(t + r\tau,v)$. The convergence in \eqref{eqn:trace_conv0} then follows from the continuity of translations on $L^1$. Namely, for $y$ and $r\in \R$ fixed, let us define
\begin{equation}
F_{r}(t) \coloneqq \int_{-\T}^{\T}\int_{-L}^L f^+(t + r\tau,v) - f^+(t,v) \ d\tau dv.
\end{equation}
Then, $F_{r_n} \rightarrow 0$ in $L^1_t([0,T])$ for each $\T > 0$. By a diagonalization argument, there is a subsequence, still denoted $r_n$, such that $F_{r_n}(t) \rightarrow 0$ for each $\T > 0$ and for Lebesgue almost every $t$, as desired.
\end{proof}

\medskip

Next, we have the following lemma, which shows that we may find a sequence of scales along which the entropy dissipation measure, $\mu$, disappears away from the plane $\{y = 0\}$ as we blow up:
\begin{lemma}\label{lem:trace3}
Suppose $\mu$ is a finite Borel measure. Then, there is a sequence $r_n \rightarrow 0^+$ such that for almost every $t$, $\mu_{r_n}\left(D_{\T}^+ \cup D_{\T}^-\right) \rightarrow 0$ for all $\T > 0$.
\end{lemma}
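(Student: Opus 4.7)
The plan is to bound the time-average $\int_0^T \mu_\eta(D_\T^\pm)\,dt$, show it tends to zero as $\eta \to 0^+$, and then pass from $L^1$-convergence in $t$ to pointwise convergence along a subsequence, diagonalizing over $\T$ to get a single sequence that works for every scale.

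Using the scaling identity $\mu_\eta(A) = \eta^{-1}\mu(J_\eta(A))$ and Fubini's theorem to swap the $t$-integral and the $\mu$-integral, the change of variables $s = t + \eta\tau$, $x = h(s) + \eta y$ would give
\begin{equation*}
\int_0^T \mu_\eta(D_\T^+)\,dt = \frac{1}{\eta}\int \abs{\set{t\in[0,T] : (s,x,v) \in J_\eta(D_\T^+)}}\,d\mu(s,x,v).
\end{equation*}
For fixed $(s,x,v)$ the admissible set of $t$'s is empty unless $h(s) < x \le h(s)+\eta$ and $v \in [-L,L]$, in which case it is contained in $[s-\T\eta,\ s+\T\eta]$ and so has Lebesgue measure at most $2\T\eta$. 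This yields
\begin{equation*}
\int_0^T \mu_\eta(D_\T^+)\,dt \le 2\T\,\mu(G_\eta^+), \qquad G_\eta^+\coloneqq\set{(s,x,v) : 0 < x - h(s) \le \eta,\ v\in[-L,L]},
\end{equation*}
and analogously for $D_\T^-$. Since the sets $G_\eta^\pm$ are nonincreasing in $\eta$ with empty intersection and $\mu$ is a finite measure, continuity of measure from above forces $\mu(G_\eta^\pm)\to 0$, so $t\mapsto \mu_\eta(D_\T^\pm)$ tends to $0$ in $L^1([0,T])$ for every fixed $\T > 0$.

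Finally, for each positive integer $N$, $L^1$-convergence yields a subsequence along which $\mu_{\eta_n}(D_N^\pm)\to 0$ for almost every $t$; a Cantor diagonal extraction then produces one sequence $\eta_n \to 0$ that handles every integer $N$ simultaneously on a common full-measure set of $t$, and monotonicity of $D_\T^\pm$ in $\T$ upgrades this to all real $\T > 0$. The step requiring the most care is this last one: the subsequence $\eta_n$ must be selected \emph{independently of} $t$, which is exactly what the passage from $L^1$-convergence plus diagonal extraction delivers, and it is the only place where the finite-measure hypothesis on $\mu$ is genuinely needed.
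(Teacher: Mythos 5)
Your proof is correct and follows essentially the same strategy as the paper's: bound $\int_0^T \mu_\eta(D_\T^\pm)\,dt$ by $2\T\,\mu(G_\eta^\pm)$ via Fubini and the scaling identity for $\mu_\eta$, use continuity of the finite measure $\mu$ from above along the shrinking family $G_\eta^\pm$ with empty intersection, and then pass from $L^1$-convergence to a.e.\ convergence along a diagonal subsequence uniformly over countably many $\T$. The only cosmetic difference is that you apply Fubini once directly to the indicator of $J_\eta(D_\T^\pm)$ against $dt \times d\mu$ and estimate the $t$-fiber, whereas the paper first slices $\mu(J_\eta(D_\T^+))$ in the time variable and then integrates in $t$; the two organizations of the Fubini computation yield the same bound, and your version has the minor advantage of not invoking a slicing of $\mu$ explicitly.
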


\begin{proof}
We will prove the statement only for the right domains, i.e. for $D_{\T}^+$. First, using Fubini's theorem in $y$, we compute for a fixed $t,\  r,$ and $\T$ satisfying $T - r\T > t > r\T$,
\begin{equation}
\begin{aligned}
\mu_r\left(D_{\T}^+\right) &= \frac{1}{r}\mu\left(J_{r}\left(D_{\T}^+\right)\right)\\
	&= \frac{1}{r}\mu\left(\set{(t + r\tau, h(t +r\tau) + r y, v) \  \bigg| \ \tau\in [-\T,\T],\ y\in (0,1],\  v\in[-L,L]}\right)\\
	&= \frac{1}{r}\int_{t-r\T}^{t + r\T} \mu_\tau \left(S_\tau \right),
\end{aligned}
\end{equation}
where $\mu_\tau$ denotes $\mu$ restricted to the slice $\{\tau\} \times \R \times [-L,L]$ and, similarly, $S_\tau$ denotes the slice in $\tau$, which is given via
$$S_\tau \coloneqq \{\tau\}\times (h(\tau),h(\tau) + r] \times [-L,L].$$
Thus, integrating in $t$, using Fubini's theorem, and changing variables, yields
\begin{equation}
\begin{aligned}
\int_{r\T}^{T-r\T} \mu_r\left(D_{\T}^+\right) \ dt &= \frac{1}{r}\int_{r\T}^{T-r\T} \int_{t-r \T}^{t+r \T}\mu_{\tau}\left(S_\tau\right) \ d\tau dt\\
	&= \frac{1}{r}\int_{-r\T}^{r\T}\int_{r\T}^{T-r\T} \mu_{t + \tau}\left(S_{t + \tau}\right) \ dtd\tau\\
	&= \frac{1}{r}\int_{-r\T}^{r\T}\int_{r\T+\tau}^{T -r\T+\tau} \mu_{t}\left(S_t\right) \ dtd\tau\\
	&\le \frac{1}{r}\int_{-r\T}^{r\T} \mu\left(\set{(t, h(t) + r y, v) \  \bigg|\ t\in [0,T], \ y\in (0,1],\  v\in[-L,L]}\right) \ d\tau\\
	&= 2\T\mu\left(\set{(t, h(t) + r y, v) \  \bigg|\ t\in [0,T], \ y\in (0,1],\  v\in[-L,L]}\right).
\end{aligned}
\end{equation}
Note that the above integration in $t$ is over $r\T < t < T - r\T$ simply to ensure that rescaling of $\mu$, which we recall depends on $h:[0,T] \rightarrow\R$, is well-defined.
Since the intersection over all $r > 0$ is empty and $\mu$ is a finite measure, the right hand side converges to $0$. Therefore, $\chi_{[r\T,T-r\T]}(t)\mu_r\left(D_{\T}^+\right)\rightarrow 0$ strongly in $L^1([0,T])$ for any $\T > 0$. After extracting subsequences and performing a diagonalization argument in $\T$, there is a sequence $r_n\rightarrow 0^+$ such that for almost every $t \in [0,T]$, $\mu_n\left(D_{\T}^+\right) \rightarrow 0$ for each $\T>0$.
\end{proof}

\medskip

The next lemma is taken directly from \cite{vasseur_kinetic}, but proved in Section \ref{sec:appendix} for the reader's convenience:
\begin{lemma}[\protect{\cite[Lemma 1.1]{vasseur_kinetic}}]\label{lem:compactness}
Suppose that for each $n$, $g_n:\R\times\R \times [-L,L]\rightarrow [0,1]$ satisfies $g_n(\tau,y,\cdot) = \chi_{[a_n(\tau,y),b_n(\tau,y)]}$ for almost every $(\tau,y)$. If we have the convergences
\begin{align}
g_n \weakstar g \qquad &\text{in}\quad L^\infty(\R^3)\\
\int_{-L}^L g_n(\tau,y,v) \;dv \rightarrow \int_{-L}^L g(\tau,y) \;dv \qquad &\text{in}\quad L^1_{loc}(\R^2)\\
\int_{-L}^L vg_n(\tau,y,v) \;dv \rightarrow \int_{-L}^L vg(\tau,y) \;dv \qquad &\text{in}\quad L^1_{loc}(\R^2),
\end{align}
then $g_n \rightarrow g$ in $L^1_{loc}(\R^3)$ and $g(\tau,y,\cdot)$ is almost everywhere the characteristic function of an interval.
\end{lemma}

\medskip

We are now ready to proceed with the proof of Proposition \ref{prop:trace2}:

\begin{proof}[Proof of Proposition \ref{prop:trace2}]
First, we use Lemma \ref{lem:trace3} to pick a sequence $r_n \rightarrow 0^+$ such that for almost every $t$, $$\lim_{n\rightarrow \infty}\mu_n\left(D_{\T}^+\right) = \lim_{n\rightarrow \infty} \mu_n\left(D_{\T}^-\right) = 0, \qquad\text{for each }\T > 0.$$
In particular, we note
$$\sup_n \{\mu_n(D_{\T}^-),\mu_n(D_{\T}^+)\} \lesssim_{\T} 1.$$
Furthermore, defining $a(v) = v - \dot{h}(t)$, by Lemma \ref{lem:trace2}, $f_n$ satisfy
\begin{equation}\label{eqn:kinetic_approx}
\partial_{\tau} f_n + a(v)\partial_y f_n = \left(\dot{h}(t + r_n\tau) -\dot{h}(t)\right)\partial_y f_n - \partial_{vv}\mu_n.
\end{equation}
We wish to apply the compactness result of Lions, Perthame, and Tadmor in Lemma \ref{lem:averaging1} to \eqref{eqn:kinetic_approx}. First, since we have a particularly simple $a$, the non-degeneracy condition \eqref{eqn:nondegenerate} is immediate. Second, we show
\begin{equation}\label{eqn:RHSconvergence}
(1 + \Delta_{\tau,y})^{-1/2}(1 + \partial_{vv})^{-3/2}\left[\left(\dot{h}(t + r_n \tau) -\dot{h}(t)\right)\partial_y f_n - \partial_{vv}\mu_n\right] \rightarrow 0,
\end{equation}
where the convergence is in $L^1_{loc}$ as $n \rightarrow \infty$. By the Lebesgue differentiation theorem, for any $t$ a Lebesgue point of $\dot{h}(t)$,
\begin{equation}
\left\|\dot{h}(t + r_n \tau) -\dot{h}(t)\right\|_{L^2_\tau([-\T,\T])} \rightarrow 0,
\end{equation}
for any $\T > 0$. Therefore, a simple duality argument using $\|f_n\|_{L^\infty} \le 1$ implies
\begin{equation}
\left\|\left(\dot{h}(t + r_n \tau) -\dot{h}(t)\right)\partial_y f_n\right\|_{H^{-1}\left(D_{\T}^\pm\right)} \rightarrow 0,
\end{equation}
for any fixed $t$ a Lebesgue point of $\dot{h}$ and any $\T>0$. Now, since $(1+\partial_{vv})^{-3/2}(1 + \Delta_{\tau,y})^{-1/2}: H^{-1}\left(D_{\T}^{\pm}\right) \rightarrow L^2\left(D_{\T}^\pm\right)$ is a bounded linear operator, we conclude
\begin{equation}
\left\|(1+\partial_{vv})^{-3/2}(1 + \Delta_{\tau,y})^{-1/2}\left(\dot{h}(t + r_n\tau) -\dot{h}(t)\right)\partial_y f_n\right\|_{L^2\left(D_{\T}^\pm\right)} \rightarrow 0.
\end{equation}
On the other hand, from the Morrey's inequality, we obtain the embedding $\mathcal{M}_{loc} \embeds W^{-s,p}_{loc}$ for $s > \frac{3(p-1)}{p}$. Note that for $1 < p < \frac{6}{5}$, we have $\mathcal{M}_{loc} \embeds W^{-\frac{1}{2},p}_{loc}$. Therefore, repeating the above argument yields for each $\T > 0$,
\begin{equation}
\left\|(1 + \Delta_{\tau,y})^{-1/2}(1 + \partial_{vv})^{-3/2}\partial_{vv}\mu_n\right\|_{L^p\left(D_{\T}^\pm\right)} \rightarrow 0.
\end{equation}
and we have established \eqref{eqn:RHSconvergence}.

Thus, using $0 \le f_n \le 1$ we apply Banach-Alaoglu to obtain $f^{\pm}_\infty$ such that $f_n \weakstar f^+_\infty$ in $L^\infty\left(D^+_{\T}\right)$ and $f_n \weakstar f^-_\infty$ in $L^\infty\left(D^-_{\T}\right)$ for each $\T > 0$. 
Now, applying the averaging lemma result, Lemma \ref{lem:averaging1}, we are guaranteed that for any compactly supported $\psi \in L^\infty_v(\R)$, $\int f_n(\cdot,\cdot,v)\psi(v) \ dv \rightarrow \int f^+_\infty(\cdot,\cdot,v)\psi(v)\ dv$ in $L^1(D_{\T}^+)$ and similarly for $f^-_\infty$.
In particular, taking $\psi(v) = \chi_{[-L,L]}(v)$ and $\psi(v) = v\chi_{[-L,L]}(v)$, Lemma \ref{lem:compactness} guarantees for almost every $(\tau,y)$, $f^+_\infty$ and $f^-_\infty$ are characteristic functions of intervals obtained strongly in $L^1\left(D_{\T}^+\right)$ and $L^1\left(D_{\T}^-\right)$, respectively, for each $\T>0$.

Finally, we note that \eqref{eqn:RHSconvergence} guarantees that the limit functions $f^\pm_\infty$ satisfy the desired limiting equations, namely \eqref{eqn:limit}.
\end{proof}

\medskip

\noindent{\underline {\bf {Step 3: Rigidity}}}

\medskip

We now use that along our particular sequence of blow-up scales, the entropy-dissipation measure vanishes away from the plane $\{y = 0\}$ to conclude that our limit, $f_\infty$, is constant on the two strips $D_{\T}^+$ and $D_{\T}^-$. Moreover, we will show that our limit is actually the weak trace and conclude by Proposition \ref{prop:trace2} that the weak traces $f^\pm(t,v)$ are characteristic functions of intervals (in $v$) for almost every $t$.

\begin{proposition}\label{prop:trace3}
Let $f$ be a solution to \eqref{eqn:kinetic}. Then, there is a sequence of scales $r_n \rightarrow 0^+$ such that for almost every $t\in [0,T]$, there are functions $f^\pm_\infty(\tau,y,v) \in L^\infty\left(D_{\T}\right)$ with
$$\lim_{n\rightarrow \infty} \| f_{r_n} - f^\pm_\infty\|_{L^1\left(D_{\T}^\pm\right)} = 0,$$ for each $\T > 0$.
Furthermore, for almost every $t$, and almost every $(\tau,y,v)\in D_{\T}^\pm$, $f^\pm(t, v) = f^\pm_\infty(\tau,y,v)$.
\end{proposition}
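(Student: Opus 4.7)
The plan is to combine the transport structure of $f^\pm_\infty$ from Proposition \ref{prop:trace2} with the strong convergence of the weak traces at $y = 0$ from Lemma \ref{lem:trace2}, in order to show that $f^\pm_\infty$ is independent of $(\tau,y)$ and in fact equal to $f^\pm(t,v)$. Fix a $t \in [0,T]$ from the full-measure set on which $t$ is a Lebesgue point of $\dot h$ and the conclusions of Lemmas \ref{lem:trace2}, \ref{lem:trace3}, and Proposition \ref{prop:trace2} all hold simultaneously; only the right-hand case will be treated, the left being identical.

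\emph{Step 1 (trace of the limit).} Test the rescaled kinetic equation \eqref{eqn:rescaled} against $\Phi \in C^\infty_c([-\T,\T]\times[0,1]\times[-L,L])$ that need not vanish at $\{y = 0\}$, and integrate by parts on the half-space $\{y > 0\}$. This yields a distributional identity in which the boundary at $\{y = 0\}$ contributes $-\int (v - \dot h(t + \eta_n\tau))\, f_n^+(\tau,v)\, \Phi(\tau,0,v)\,d\tau\,dv$, with $f_n^+(\tau,v) = f^+(t+\eta_n\tau,v)$ supplied by Lemma \ref{lem:trace2}. Passing $n \to \infty$: the interior terms converge by the strong $L^1$ convergence $f_n \to f^+_\infty$ on $D_\T^+$ combined with Lebesgue differentiation of $\dot h$ at $t$; the boundary term converges by Lemma \ref{lem:trace2}; and the entropy-dissipation contribution $\int \mu_n \partial_{vv}\Phi$ vanishes by Lemma \ref{lem:trace3}. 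The limiting identity is precisely the weak form of
\[ \partial_\tau f^+_\infty + (v - \dot h(t))\partial_y f^+_\infty = 0 \text{ on } \R_\tau \times (0,\infty)_y \times [-L,L], \]
together with the trace identification $f^+_\infty|_{y = 0} = f^+(t,v)$, which is \emph{independent of $\tau$}.

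\emph{Step 2 (rigidity by characteristics).} Since $f^+_\infty \in L^\infty$ satisfies a constant-coefficient transport equation in $(\tau,y)$ with $v$ treated as a parameter, the classical method of characteristics provides
\[ f^+_\infty(\tau,y,v) = F^+\bigl(y - (v - \dot h(t))\tau,\, v\bigr) \qquad \text{for a.e. } (\tau,y,v) \text{ with } y > 0, \]
for some $F^+ \in L^\infty(\R \times [-L,L])$. For each $v \neq \dot h(t)$, every $(\tau_0,y_0)$ with $y_0 > 0$ lies on a characteristic that reaches $\{y = 0\}$ at time $\tau^* = \tau_0 - y_0/(v - \dot h(t))$, so the trace identification of Step 1 forces $F^+\bigl(-(v - \dot h(t))\tau^*, v\bigr) = f^+(t,v)$ for a.e.\ $\tau^* \in \R$, whence $F^+(s,v) = f^+(t,v)$ for a.e.\ $s \in \R$. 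Because $\{v = \dot h(t)\}$ is $v$-null, this yields $f^+_\infty(\tau,y,v) = f^+(t,v)$ a.e.\ on $\bigcup_{\T > 0} D_\T^+$, as required.

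The main obstacle is Step 1, namely the passage to the limit in the weak-trace identity for the rescaled sequence $f_n$. It requires using test functions that do not vanish at $\{y = 0\}$ and simultaneously controlling (i) the bulk by the strong interior $L^1$ convergence on $D_\T^+$, (ii) the boundary contribution by the $L^1_{loc}$ convergence of $f_n^+$ from Lemma \ref{lem:trace2}, and (iii) the entropy-dissipation term by Lemma \ref{lem:trace3}, while also absorbing the small coefficient defect $\dot h(t) - \dot h(t+\eta_n\tau)$ via Lebesgue differentiation. Once the trace identification is in hand, the constant-coefficient transport structure rigidly propagates the boundary value throughout the half-strip and yields the equality $f^\pm_\infty = f^\pm(t,v)$.
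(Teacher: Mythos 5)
Your proposal is correct and takes essentially the same approach as the paper: identify the trace of $f^+_\infty$ along $\{y=0\}$ with $f^+(t,v)$ using the weak trace of Lemma \ref{lem:trace2} and the vanishing dissipation of Lemma \ref{lem:trace3}, then propagate along characteristics. The paper implements your Step 1's "integrate by parts and pass to the limit" by introducing the BV-in-$y$ functional $g_\varphi^n(y)$ coupling $f_n$ and $f^+_\infty$ and showing its total variation vanishes, which is the precise mechanism by which the boundary value $g_\varphi^n(0^+)$ is controlled; your acknowledged "main obstacle" is exactly this point, and the paper's argument is the rigorous form of what you describe.
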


\begin{proof}
Pick $r_n \rightarrow 0^+$ given by Proposition \ref{prop:trace2}. Then, up to extracting a subsequence still called $r_n$, both Proposition \ref{prop:trace2} and Lemma \ref{lem:trace2} hold along $r_n$.
For $\varphi\in C^\infty_c([-\T,\T]\times[-L,L])$ and $y > 0$ we define the functional
\begin{equation}
g_\varphi^n(y) = \int_{\R^2} \left[(v - \dot{h}(t + r_n\tau))f_{n}(\tau,y,v) - (v - \dot{h}(t))f^+_\infty(\tau,y,v)\right]\varphi(\tau,v) \;dvd\tau.
\end{equation}
Now, for $\psi\in C^1_c([0,1])$ with $\|\psi\|_{L^\infty} \le 1$, using $f_{n}$ solve \eqref{eqn:rescaled}, we have 
\begin{equation}
\begin{aligned}
\left|\int_{\R} \psi^\prime(y)g_\varphi^n(y) \;dy\right| &\le \int_{\R^3}\left|\psi(y)\partial_{\tau}\varphi(\tau,v)\left[f^+_\infty(\tau,y,v) - f_{n}(\tau,y,v)\right]\right| \ d\tau dvdy\\
 &\quad + \int_{\R^3} |\psi(y)\partial_{vv}\varphi(\tau,v)| \ d\mu_{n}(\tau,y,v).
\end{aligned}
\end{equation}
As $\mu_{r_n}$ is a finite measure for each $n$, $g_\varphi^n(y)$ is of bounded variation on $[0,1]$, with variation bounded via
\begin{equation}
Var(g_\varphi^n) \le \|\partial_{\tau}\varphi\|_{L^\infty}\|f^+_\infty - f_{r_n}\|_{L^1(D_{\T}^+)} + \|\mu_{r_n}\|_{TV}\|\partial_{vv}\varphi\|_{L^\infty}.
\end{equation}
From Proposition \ref{prop:trace2} $f_{r_n} \rightarrow f^+_\infty$ in $L^1(D_{\T}^+)$ as $n\rightarrow \infty$ and $\mu_{r_n} \rightarrow 0$ in total variation by Lemma \ref{lem:trace3} as $n\rightarrow \infty$. It follows that $Var(g_\varphi^n) \rightarrow 0$ and therefore, the boundary value $g_\varphi^n(0^+)$ is bounded via (defined because $g_\varphi^n$ is $BV$),
\begin{equation}
g_\varphi^n(0^+) \le g_\varphi^n(1) + Var(g_\varphi^n) \rightarrow 0. 
\end{equation}
On the other hand, for Lebesgue points of $\dot{h}$, $\dot{h}(t + r_n\tau) \rightarrow \dot{h}(t)$ in $L^1([-\T,\T])$. Note also, Proposition \ref{prop:trace1} and Lemma \ref{lem:trace2} imply $f_\infty^+$ and $f_n$ have a weak traces along the line $y=0$. More specifically, $f_n^+(\tau,v) = f^+(t + r_n\tau, v) \rightarrow f^+(t,v)$ in $L^1_{loc}(\tau,v)$ for almost every $t$, which implies
\begin{equation}
\lim_{n\rightarrow \infty} g_\varphi^n(0^+) = \int_{\R^2} (v - \dot{h}(t))\left[f^+(t,v) - f^+_\infty(\tau,0^+,v)\right]\varphi(\tau,v) \ d\tau dv,
\end{equation}
where $f^+_\infty(\tau,0^+,v)$ denotes the right (weak) trace of $f_\infty^+$ along $y=0$. By the fundamental lemma of the calculus of variations,
\begin{equation}
\left(v - \dot{h}(t)\right)\left[f^+(t,v) - f^+_\infty(\tau,0^+,v)\right] = 0,
\end{equation}
for almost every $(\tau,v)\in \R^2$. Since $(v - \dot{h}(t)) \neq 0$ for almost all $(\tau,v)$, it follows that
\begin{equation}
f^+(t,v) = f^+_\infty(\tau,0^+,v),
\end{equation}
in the same sense. Because the limit equation for $f^+_\infty$, namely \eqref{eqn:limit}, is a transport equation with constant initial data, $f^+_\infty$ is constant. Namely,
\begin{equation}
f^+_\infty\left(\tau,y,v\right) = f^+_\infty\left(\tau + \frac{y}{\left[v - \dot{h}(t)\right]}, 0^+,v\right) = f^+(t,v),
\end{equation}
for each $y > 0$ and almost every $\tau,v \in \R^2$.
\end{proof}

\medskip

\noindent{\underline {\bf {Step 4: Proof of Proposition \ref{prop:trace_kinetic}}}}

\medskip

Fix $t\mapsto h(t)$ a Lipschitz curve and $f$ a solution to \eqref{eqn:kinetic}. Then, by Proposition \ref{prop:trace1}, there exist $f^+$ and $f^-$ weak traces for $f$ along $h(t)$ for which $0 \le f^\pm \le 1$. By Proposition \ref{prop:trace3}, we know that $f^\pm(t,\cdot)$ are realized as the blow-up along a sequence of scales $r_n \rightarrow 0^+$. Thus, by Lemma \ref{lem:compactness}, we conclude $f^\pm(t,\cdot)$ are characteristic functions of intervals (in $v$) for almost every $t$. Now, Proposition \ref{prop:trace_kinetic} follows since weak convergence of characteristic functions to characteristic functions implies strong convergence. More precisely, by Proposition \ref{prop:trace1}, we have for any $\varphi \in L^1([0,T]\times \R \times [-L,L])$, compactly supported,
\begin{equation}\label{eqn:weak_trace}
\lim_{n \rightarrow \infty} \esssup_{0 < y < \frac{1}{n}} \left|\int_0^T\int_{\R} \left[f(t,h(t) + y, v) - f^+(t,v)\right]\varphi(t,v) \;dvdt\right| = 0.
\end{equation}
Finally, recall $f$ and $f^+$ are characteristic functions in $v$, so that taking $\varphi(t,v) = f^+(t,v)$ and $\varphi(t,v) = 1 - f^+(t,v)$ completes the proof.


\section{Partial Regularity}\label{sec:DeGiorgi}

In this section, we prove Proposition \ref{prop:Linfty}, which is a type of partial regularity result for the Riemann invariants of \eqref{eqn:Euler}. The argument is inspired by the first lemma of De Giorgi introduced in \cite{degiorgi} and adapted to the context of scalar conservation laws by Silvestre in \cite[Theorem 6.1]{silvestre}. Note that for \eqref{eqn:Euler} the Riemann invariants appear to decouple and each solve Burger's equation, where the two remain coupled only by the Rankine-Hugoniot jump condition. Thus, if it is known a priori that no shocks are present in a solution, one can apply Silvestre's method directly to recover some form of continuity. However, when it is unknown whether shocks are present, we must necessarily consider both Riemann invariants $\lambda_1$ and $\lambda_2$ simultaneously, which leads to a different usage of the kinetic formulation and different results.

Let us begin with an estimate on the total variation of the entropy dissipation measure $\mu$ on unbounded $v$-intervals using \eqref{eqn:kinetic}:
\begin{lemma}\label{lem:TV_estimate}
Let $f \in L^1(B_2\times \R)$ be a weak solution of \eqref{eqn:kinetic} with $\supp f(t,x) \subset [-L,L]$. Then, for any $0 < r < R$, and any $a \in \R$,
\begin{equation}
\mu\left(B_r \times (-\infty,a]\right) \lesssim_{L} (R - r)^{-1}\int_{B_{R}}\int_{-L}^{a} f \;dvdxdt \qquad \text{and}\qquad \mu\left(B_r \times [a,\infty)\right) \lesssim_{L} (R-r)^{-1}\int_{B_{R}}\int_{a}^L f \;dvdxdt
\end{equation}
\end{lemma}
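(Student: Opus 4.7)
\textbf{Proof plan for Lemma \ref{lem:TV_estimate}.} The strategy is the standard De Giorgi-style testing of the kinetic equation \eqref{eqn:kinetic} against a tensorized test function $\varphi(t,x)\psi(v)$, where $\psi$ is chosen so that $\psi''$ equals (an approximation of) the indicator of the $v$-interval we want to estimate, and $\varphi$ is a spatial cutoff interpolating between $B_r$ and $B_R$. I will prove only the first inequality; the second is identical with $(v-a)_+$ in place of $(a-v)_+$. Without loss of generality I may assume $a\in[-L,L]$: if $a<-L$ both sides vanish since $\supp f\subset\{|v|\le L\}$ (and hence $\supp\mu\subset\{|v|\le L\}$, which is inherited from the kinetic formulation), while if $a>L$ we simply replace $a$ by $L$.

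First, pick $\varphi\in C_c^\infty(B_R)$ with $0\le\varphi\le 1$, $\varphi\equiv 1$ on $B_r$, and $|\partial_t\varphi|+|\partial_x\varphi|\lesssim (R-r)^{-1}$. Second, take $\psi(v)=\tfrac{1}{2}(a-v)_+^2$, which is $C^{1,1}$ with $\psi'(v)=-(a-v)_+$ and $\psi''(v)=\chi_{\{v<a\}}$ almost everywhere. Since $\psi$ is not $C^2$, I will convolve it in $v$ with a standard mollifier $\eta_\varepsilon$ to obtain $\psi_\varepsilon\in C^\infty$ with $\psi_\varepsilon''\to\chi_{\{v<a\}}$ pointwise a.e.\ and boundedly, which is enough to pass to the limit since $\mu$ is finite. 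Testing \eqref{eqn:kinetic} against $\varphi(t,x)\psi_\varepsilon(v)$ and integrating by parts yields
\begin{equation}
\int\varphi(t,x)\,\psi_\varepsilon''(v)\,d\mu(t,x,v)=-\int\bigl[\psi_\varepsilon(v)\partial_t\varphi(t,x)+v\,\psi_\varepsilon(v)\partial_x\varphi(t,x)\bigr]f(t,x,v)\,dv\,dx\,dt,
\end{equation}
and sending $\varepsilon\to 0^+$ (using dominated convergence for the measure integral and the fact that $\psi_\varepsilon\to\psi$ uniformly on $[-L,L]$) gives the identity
\begin{equation}
\int\varphi\,\chi_{\{v<a\}}\,d\mu=-\int\psi(v)\bigl(\partial_t\varphi+v\partial_x\varphi\bigr)f\,dv\,dx\,dt.
\end{equation}

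Finally, the left-hand side dominates $\mu(B_r\times(-\infty,a])$ because $\varphi\equiv 1$ on $B_r$ and $\mu\ge 0$. For the right-hand side, $\psi$ vanishes for $v\ge a$, so the $v$-integration is restricted to $v\in[-L,a]$, and on $\supp f$ we have $\psi(v)\le\tfrac{1}{2}(a+L)^2\le 2L^2$ together with $|v|\le L$. Combining with the cutoff bound $|\partial_t\varphi|+|v||\partial_x\varphi|\lesssim (1+L)(R-r)^{-1}$, we obtain
\begin{equation}
\mu\bigl(B_r\times(-\infty,a]\bigr)\lesssim_L (R-r)^{-1}\int_{B_R}\int_{-L}^{a}f(t,x,v)\,dv\,dx\,dt,
\end{equation}
which is the claim. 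The main technical point is the mollification to justify the integration by parts in $v$ against the measure $\mu$; once that is done the estimate is essentially immediate from the choice of $\psi$.
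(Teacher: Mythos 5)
Your overall approach is the same as the paper's: test the kinetic equation against a tensor product $\varphi(t,x)\psi(v)$ where $\psi$ is a one-sided quadratic whose second derivative is (approximately) the indicator of the target $v$-interval, then use the cutoff gradient bound and the support of $f$ to close the estimate. However, there are several gaps, and the devices the paper uses that you omit are there precisely to close them.

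First, your $\psi''$ is $\chi_{\{v<a\}}$, so the identity you land on controls $\int\varphi\,\chi_{(-\infty,a)}\,d\mu$, which dominates $\mu(B_r\times(-\infty,a))$ but \emph{not} $\mu(B_r\times(-\infty,a])$: the two differ by the possible $\mu$-mass on the slice $\{v=a\}$, which you cannot rule out a priori. The paper avoids this by introducing an auxiliary parameter $b<a$ (so $[a,\infty)\subset(b,\infty)$ in their version) and only sending $b\to a$ at the very end; you would need an analogous limiting step, or a one-sided mollifier chosen so that $\psi_\varepsilon''\ge\chi_{(-\infty,a]}$.

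Second, and relatedly, you justify passing to the limit in $\int\varphi\psi_\varepsilon''\,d\mu$ by saying $\psi_\varepsilon''\to\chi_{\{v<a\}}$ ``pointwise a.e.\ and boundedly.'' But ``a.e.''\ here is Lebesgue-a.e.\ in $v$, which is not sufficient for dominated convergence against $\mu$: $\mu$ might charge the single point $\{v=a\}$, and a symmetric mollification gives $\psi_\varepsilon''(a)\to 1/2\ne\chi_{\{v<a\}}(a)$. The paper explicitly flags this (``we do not know whether $\mu$ is concentrated in velocity a priori... so we are careful to ensure our bounds hold pointwise everywhere'') and engineers its cutoff $\zeta_\varepsilon$ so that the convergence is \emph{everywhere}, not just Lebesgue-a.e.

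Third, your reduction ``if $a<-L$ both sides vanish since... $\supp\mu\subset\{|v|\le L\}$, which is inherited from the kinetic formulation'' is circular. The paper's remark immediately after the lemma states that $\supp\mu\subset[-L,L]$ is \emph{not} apparent from \eqref{eqn:kinetic} and is in fact a \emph{consequence} of this lemma. This particular use is harmless only because it appears in an avoidable WLOG step, but the assertion itself is unjustified.

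Finally, your test function $\varphi\psi_\varepsilon$ is not compactly supported in $v$ (your $\psi_\varepsilon$ grows quadratically as $v\to-\infty$), so ``testing \eqref{eqn:kinetic} against $\varphi\psi_\varepsilon$'' requires a justification you don't supply. The paper's $\zeta_\varepsilon$ does double duty here: it cuts off at $|v-b|\sim\varepsilon^{-1}$ with the decay rates $|(v-b)\zeta_\varepsilon'|,|(v-b)^2\zeta_\varepsilon''|\lesssim 1$ calibrated exactly to cancel the quadratic growth of $\psi$, keeping all error terms uniformly bounded. One can instead argue that a generic cutoff at radius $R$ contributes $O(1)\cdot\mu(B_2\times\{|v|>R\})\to 0$ since $\mu$ is finite, but this needs to be said.

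In short: the method is right, but the auxiliary parameter $b<a$ and the specially-constructed cutoff $\zeta_\varepsilon$ in the paper are not cosmetic — they are there precisely to handle the closed-versus-open interval issue, the possible atoms of $\mu$ in $v$, and the lack of compact support in $v$, all three of which your write-up glosses over.
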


\begin{remark}
Note that as a consequence of the above lemma, we recover that the support of $\mu$ is contained within $[-L,L]$, which is not immediately apparent from \eqref{eqn:kinetic}. Also, as expected, $\mu$ is not concentrated in velocity.
\end{remark}

\begin{remark}
We are \emph{NOT} able to show the corresponding estimates for $1-f$. More precisely, we \emph{CANNOT} show
\begin{equation}\label{eqn:not_true}
\mu\left(B_r \times [a,b]\right) \lesssim_{L} (R - r)^{-1}\int_{B_{R}}\int_{a}^{b} 1-f \;dvdxdt.
\end{equation}
If we could show a bound akin to \eqref{eqn:not_true}, we would be able to obtain weak-strong stability and uniqueness results for all entropy solutions to \eqref{eqn:Euler}, regardless of size in $L^\infty$ and where the strong solution may contain shocks. In particular, \eqref{eqn:not_true} would allow us to obtain partial regularity results directly for a weak solution $u$ to \eqref{eqn:Euler}, not just on the characteristic fields $\lambda_1(u)$ and $\lambda_2(u)$. However, useful variants of \eqref{eqn:not_true} are likely false, because even if $f \equiv 1$ on some region $A$, we need not have $\mu(A) = 0$. For example, one can construct a shock wave $(u_L,u_R,\sigma_{LR})$, so that the corresponding $f$ is identically $1$ for a portion of the wave front, where $\mu$ does not vanish.
\end{remark}

\begin{proof}[Proof of Lemma \ref{lem:TV_estimate}]
We prove first the bounds for velocity intervals of the form $[a,\infty)$. Fix $b < a$, $\varphi\in C^\infty_c(\R^2)$, and $\zeta_{\eps} \in C^\infty_c(\R)$ such that
$$
\begin{cases} 0 \le \varphi(t,x) \le 1& \\
		\varphi(t,x) = 1 &\text{if }(t,x)\in B_r\\
		\varphi(t,x) = 0 &\text{if }(t,x)\notin B_R\\
		|\nabla \varphi| \le \frac{C}{R-r}&\\
\end{cases} \text{ and }
\begin{cases} 0\le \zeta_{\eps}(v) \le 1,&\\
	\zeta_{\eps}(v) = 0&\text{if }|v-b| > 2\eps^{-1} \text{ or }|v-b| < \eps,\\
	\zeta_{\eps} = 1&\text{if }\eps < |v-b| < \eps^{-1},\\
	|(v-b)\zeta_{\eps}^\prime(v)| \le C,&\\
	|(v-b)^2\zeta_{\eps}^{\prime\prime}(v)| \le C&\\
\end{cases}.$$
Lastly, we define $$\psi(v) = \begin{cases} \frac{(v-b)^2}{2} &\text{if }v > b\\ 0 &\text{otherwise.}\end{cases}$$
We note $\psi \in C^1(\R) \cap C^\infty(\R\setminus\{b\})$ with $\psi^{\prime\prime}(v) = \chi_{(b,\infty)}(v)$. Thus, we test \eqref{eqn:kinetic} with $\varphi(t,x)\psi(v)\zeta_{\eps}(v) \in C^\infty_c(\R^3)$ to obtain
\begin{equation}
\int_{\R^3} \left(\partial_t \varphi + v\partial_x\varphi\right)\psi(v)\zeta_{\eps}(v) f\;dxdtdv = \int_{\R^3} (\psi\zeta_{\eps})^{\prime\prime}\varphi \;d\mu(t,x,v),
\end{equation}
Now, we use 
$$(\psi\zeta_{\eps})^{\prime\prime} = \psi^{\prime\prime}(v)\zeta_{\eps}(v) + 2\psi^\prime(v)\zeta_{\eps}^\prime(v) + \psi(v)\zeta_{\eps}^{\prime\prime}(v)),$$
where the right hand side is bounded pointwise by a constant $C$ independent of $\eps$ and $v$ and the right hand side converges pointwise everywhere to $\chi_{(b,\infty)}(v)$ as $\eps \rightarrow 0+$.  Note, we do not know whether $\mu$ is concentrated in velocity a priori, only that $\|\mu\|_{TV} < \infty$, so we are careful to ensure our bounds hold pointwise everywhere. Thus, by the Lebesgue dominated convergence theorem for the measure $\mu$, we have 
\begin{equation}
\mu(B_1\times [a,\infty)) \le \int_{\R^3} \chi_{(b,\infty)}\varphi \;d\mu(t,x,v) = \int_{\R^3} \left(\partial_t \varphi + v\partial_x\varphi\right)\psi(v) f\;dxdtdv \le \frac{C(L)}{R-r} \int_{B_R}\int_b^L f \;dvdxdt.
\end{equation}
Finally, taking $b\uparrow a$, the monotone convergence theorem yields the desired result for intervals of the form $[a,\infty)$. Redefining $\psi$ so that $$\psi(v) = \begin{cases} \frac{(v-b)^2}{2} &\text{if }v < b\\ 0 &\text{otherwise}\end{cases},$$ the same argument yields the desired bounds on intervals of the form $(-\infty, a]$ as well.
\end{proof}

The following estimate is critical for the proof of Proposition \ref{prop:Linfty}. The estimate encodes the gain of integrability coming from localizing from a larger space-time domain and larger range of velocities to a smaller space-time domain and smaller range of velocities.
\begin{lemma}\label{lem:DeGiorgi_estimate}
Say $f \in L^1(B_2 \times [-L,L])$ is a weak solution of \eqref{eqn:kinetic} with $\supp f(t,x) \subset [-L,L]$. Then, there is a $\theta_0$ (explicitly computable as $\frac{1}{7}$) such that for any $0 < r < R < 2$ and any $-L \le l_2 \le l_1 \le L$, and any $0 < \alpha < \frac{1}{2}$,
\begin{align}
&\|f\|_{L^1(B_r \times [l_1,L])} \lesssim (l_1 - l_2)^{-2\theta_0}(R-r)^{-\theta_0}\|f\|_{L^1(B_R \times [l_2, L])}^{\frac{1 + \theta_0}{2}} \left|\set{ \int_{l_1}^{L}f(t,x,v) > 0\;dv} \cap B_r\right|^{\alpha}\\
&\|f\|_{L^1(B_r \times [-L,l_2])} \lesssim (l_1 - l_2)^{-2\theta_0}(R-r)^{-\theta_0}\|f\|_{L^1(B_R \times [-L, l_1])}^{\frac{1 + \theta_0}{2}} \left|\set{ \int_{-L}^{l_2}f(t,x,v) > 0\;dv} \cap B_r\right|^{\alpha}.
\end{align}
\end{lemma}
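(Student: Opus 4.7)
The strategy is to apply the quantitative averaging lemma (Lemma~\ref{lem:averaging2}) to an appropriate double truncation of $f$ in both space-time and velocity, then convert the resulting fractional Sobolev estimate into the claimed $L^1$ bound via Sobolev embedding and Hölder's inequality on the level set $A := \{(t,x)\in B_r : \int_{l_1}^L f(t,x,v)\,dv > 0\}$. I focus on the first inequality; the second follows identically after mirroring the cutoff in $v$.

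First, I fix cutoffs $\varphi \in C_c^\infty(\mathbb{R}^2)$ and $\tilde\psi \in C_c^\infty(\mathbb{R})$ with $\varphi \equiv 1$ on $B_r$, $\supp \varphi \subset B_{(r+R)/2}$, $|\nabla\varphi|\lesssim (R-r)^{-1}$, and $\tilde\psi \equiv 1$ on $[l_1, L]$, $\supp \tilde\psi \subset [l_2, L+1]$, $|\tilde\psi^{(k)}|\lesssim (l_1-l_2)^{-k}$ for $k = 1, 2$. A direct computation from \eqref{eqn:kinetic}, using the distributional identity $\tilde\psi\,\partial_{vv}\mu = \partial_{vv}(\tilde\psi\mu) - 2\partial_v(\tilde\psi'\mu) + \tilde\psi''\mu$, shows that $h := \varphi\tilde\psi f$ satisfies
\begin{equation*}
\partial_t h + v\partial_x h = g - \partial_{vv}(\varphi\tilde\psi\mu) + \partial_v(2\varphi\tilde\psi'\mu) - \varphi\tilde\psi''\mu, \qquad g := (\partial_t\varphi + v\partial_x\varphi)\tilde\psi f.
\end{equation*}
Applying Lemma~\ref{lem:averaging2} to $h$ (rescaling the velocity test function class to accommodate $\supp\tilde\psi \subset [l_2, L+1]$, at the cost of constants depending on $L$) with an averaging bump identically $1$ on $\supp\tilde\psi$ yields a $W^{\theta,7/4}_{t,x}$ bound on $U(t,x) := \varphi(t,x)\int \tilde\psi(v) f(t,x,v)\,dv$, which on $B_r$ satisfies $U = \int \tilde\psi f\,dv \ge \int_{l_1}^L f\,dv$.

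Next, I estimate each term on the right-hand side of the averaging lemma using $0 \le f, \tilde\psi \le 1$. The $L^2$ and source terms satisfy $\|h\|_{L^2}^2 \le \|f\|_{L^1(B_R\times[l_2,L])}$ and $\|g\|_{L^1} \lesssim (R-r)^{-1}\|f\|_{L^1(B_R\times[l_2,L])}$. For the three measure terms, Lemma~\ref{lem:TV_estimate} applied with threshold $a = l_2$ and the ball pair $B_{(r+R)/2} \subset B_R$ gives $\mu(B_{(r+R)/2}\times [l_2,\infty)) \lesssim (R-r)^{-1}\|f\|_{L^1(B_R\times[l_2,L])}$, and hence $\|\varphi\tilde\psi^{(k)}\mu\|_{TV} \lesssim (l_1-l_2)^{-k}(R-r)^{-1}\|f\|_{L^1(B_R\times[l_2,L])}$ for $k = 0, 1, 2$. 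The dominant factor $(l_1-l_2)^{-2}$ comes from $k = 2$. Plugging these into the averaging inequality with $\theta_0 := 1/7$ produces
\begin{equation*}
\|U\|_{W^{\theta,7/4}_{t,x}} \lesssim (l_1-l_2)^{-2\theta_0}(R-r)^{-\theta_0}\|f\|_{L^1(B_R\times[l_2,L])}^{(1+\theta_0)/2},
\end{equation*}
since the product of exponents is $(1/2)(6/7) + 1/7 = 4/7 = (1+\theta_0)/2$.

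Finally, because $U \ge \int_{l_1}^L f\,dv$ on $B_r$ while the integrand $\int_{l_1}^L f\,dv$ vanishes off $A$, Hölder's inequality gives $\|f\|_{L^1(B_r\times[l_1,L])} \le \int_A U \le |A|^{1-1/q}\|U\|_{L^q}$ for any $q \ge 1$. The $2$d Sobolev embedding $W^{\theta,7/4}(\mathbb{R}^2) \hookrightarrow L^q(\mathbb{R}^2)$ with $\tfrac{1}{q} = \tfrac{4}{7} - \tfrac{\theta}{2}$ lets $1 - \tfrac{1}{q}$ range over $(3/7, 1/2)$ as $\theta$ varies in $(0, 1/7)$, which handles $\alpha \in (3/7, 1/2)$ by choosing $\theta$ so that $1 - 1/q = \alpha$. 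The remaining range $\alpha \in (0, 3/7]$ is covered by taking $q = 7/4$ and absorbing $|A|^{3/7 - \alpha} \le \max(1, |B_2|)^{3/7 - \alpha}$ into the implicit constant. The main technical obstacle is the commutator bookkeeping of Step~1: the two velocity derivatives on $\tilde\psi$ must be transferred from $\mu$ to the cutoff to produce the sharp $(l_1-l_2)^{-2}$ dependence, and Lemma~\ref{lem:TV_estimate} is precisely what converts the resulting measure terms back into $L^1$ norms of $f$ rather than crude total-variation bounds.
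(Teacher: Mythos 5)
Your proposal matches the paper's proof in all essentials: you localize with the same product cutoff in space-time and velocity (a $\varphi$ with $|\nabla\varphi| \lesssim (R-r)^{-1}$ and a $\tilde\psi$ with $|\tilde\psi^{(k)}| \lesssim (l_1-l_2)^{-k}$), derive the same localized kinetic equation with three measure source terms and an $L^1$ commutator, apply Lemma~\ref{lem:averaging2} to get the $W^{\theta,7/4}_{t,x}$ bound, control the measure terms via Lemma~\ref{lem:TV_estimate}, and close with Sobolev embedding into $L^q(\mathbb{R}^2)$ plus H\"older on the level set $A$. The exponent bookkeeping $(1/2)(6/7) + 1/7 = (1+\theta_0)/2$ agrees with the paper. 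You also explicitly record two small points the paper leaves implicit: the rescaling of the averaging bump to accommodate the full velocity support at the cost of $L$-dependent constants, and the extension from $\alpha$ near $1/2$ (obtained directly from the embedding with $\theta \in (0,\theta_0)$) to all of $(0,1/2)$ by absorbing a bounded power of $|A|$ into the implicit constant. These are both correct and welcome clarifications, not genuine deviations from the paper's argument.
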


\begin{proof}
We will prove only the estimates for sets of the form $B_r \times [l_1,L]$. For this proof, we let $z = (t,x) \in \R^2$ denote space-time. Thus, \eqref{eqn:kinetic} reads as
\begin{equation}\label{eqn:kinetic2}
a(v) \cdot \nabla_z f = -\partial_{vv}\mu,
\end{equation}
where $a(v) = (1,v)$. Now, we fix $f$ a weak solution of \eqref{eqn:kinetic2} with $\supp f(t,x) \subset [-L,L]$ and corresponding entropy dissipation measure $\mu$. 
We also fix $\varphi \in C^\infty_c(\R^2)$ and $\psi\in C^\infty_c(\R)$ satisfying
$$
\begin{cases} 0 \le \varphi(z) \le 1& \\
		\varphi(z) = 1 &\text{if }|z|<r\\
		\varphi(z) = 0 &\text{if }|z| >\widetilde R\\
		|\nabla \varphi| \le \frac{C}{\widetilde R-r}&\\
\end{cases} \text{ and }
\begin{cases} 0\le \psi(v) \le 1,&\\
	\psi(v) = 0&\text{if }v < l_2,\\
	\psi = 1&\text{if } v > l_1,\\
	|\psi^\prime(v)| \le \frac{C}{l_1 - l_2},&\\
	|\psi^{\prime\prime}(v)| \le \frac{C}{(l_1-l_2)^2}&\\
\end{cases},$$
where $\widetilde R = \frac{R + r}{2}$. We define the localized quantities
\begin{align*}
g(z,v) := \psi(v)\varphi(z)f(z,v),\quad \mu_0(z,v) &:= -\left(\varphi(z)\psi^{\prime\prime}(v)\right)\mu(z,v), \quad \mu_1(z,v) := \left(2\varphi(z)\psi^{\prime}(v)\right)\mu(z,v),\\
	&\text{and}\ \mu_2(z,v) := -\left(\varphi(z)\psi(v)\right)\mu(z,v),
\end{align*}
which solve a version of \eqref{eqn:kinetic2} with lower order terms, namely
\begin{equation}
a(v) \cdot \nabla_z g = \partial_{vv}\mu_2 + \partial_v \mu_1 + \mu_0 + \psi f\left[a(v) \cdot\nabla_z \varphi\right].
\end{equation}
Therefore, applying Lemma \ref{lem:averaging2} to $g$, we conclude for any $0 < \theta < \theta_0 = \frac{1}{7}$,
\begin{equation}\label{eqn:estimate1}
\left\|\int g \;dv \right\|_{W^{\theta,\frac{2}{1+\theta_0}}} \lesssim_{\theta,L} \|g\|_{L^2}^{1-\theta_0} \left(\|\mu_2\|_{TV} + \|\mu_1\|_{TV} + \|\mu_0\|_{TV} + \|\psi f(a\cdot \nabla \varphi)\|_{L^1}\right)^{\theta_0}.
\end{equation}
We will upper bound each term on the right hand side individually. First, since $0\le \psi,\varphi,f \le 1$, we have
$$\|g\|_{L^2}^{1-\theta_0} \le \|f\|_{L^1(B_{\widetilde R}\times[l_2,L])}^{\frac{1-\theta_0}{2}}.$$
Second, since $\|a\psi\|_{L^\infty([-L,L])} \le L$, we obtain the bound
\begin{equation}
\|\psi f(a\cdot \nabla \varphi)\|_{L^1} \le \frac{CL}{\widetilde R -r} \|f\|_{L^1(B_{\widetilde R} \times [l_2,L])}.
\end{equation}
Third, for each $i=0,\ 1,\ 2$, we use $\frac{d^i}{dv^i} \psi(v) \le \frac{C}{(l_1 - l_2)^i}$ and Lemma \ref{lem:TV_estimate} to obtain
\begin{equation}
\|\mu_i\|_{TV} \le \frac{C}{(l_1-l_2)^i}\|\mu\|_{TV(B_{\widetilde R} \times [l_2,L])} \le \frac{C}{(l_1-l_2)^i(R - \widetilde R)}\|f\|_{L^1(B_R \times [l_2, L])}.
\end{equation}
Combining our estimates, we conclude
\begin{equation}
\left\|\int \varphi(z)\psi(v)f(z,v) \;dv \right\|_{W^{\theta,\frac{2}{1+\theta_0}}} \lesssim_{L} \frac{1}{(l_2 - l_1)^{2\theta_0}(R-r)^{\theta_0}}\|f\|_{L^1(B_R \times [l_2,L])}^{\frac{1+\theta_0}{2}}.
\end{equation}
Now, by the Sobolev embedding $W^{\theta, \frac{2}{1+\theta_0}}(\R^2) \embeds L^q(\R^2)$ for $q = \frac{2}{1 + (\theta_0 - \theta)}$, we lower bound the left hand side as
\begin{equation}
\left\|\int \varphi(z)\psi(v)f(z,v) \;dv \right\|_{W^{\theta,\frac{2}{1+\theta_0}}} \gtrsim_{q} \left\|\int_{l_1}^L \varphi(z)f(z,v)\;dv \right\|_{L^q}.
\end{equation}
Finally, we use $\varphi\le 1$ and H\"older's inequality, in the form
\begin{equation}
\int_{B_r}\int_{l_1}^L f(z,v) \;dvdz \le \left\|\int_{l_1}^L f(z,v)\;dv \right\|_{L^q}\left|\set{z\in B_r \ \bigg | \ \int_{l_1}^L f(z,v)\;dv > 0}\right|^{\alpha},
\end{equation}
where $\alpha = \frac{q-1}{q} = \frac{1 - (\theta_0 -\theta)}{2}$ which can be made arbitrarily close to $\frac{1}{2}$ by taking $\theta$ arbitrarily close to $\theta_0$, which completes the proof.
\end{proof}

Next, we iterate the gain from Lemma \ref{lem:DeGiorgi_estimate} to prove the following lemma, which is the direct analogue of Proposition \ref{prop:Linfty}, stated for the kinetic equation \eqref{eqn:kinetic}:
\begin{lemma}\label{lem:DeGiorgi_iteration}
Fix $L > 0$ and $-L < a < b< L$. Then for any $f$ solving \eqref{eqn:kinetic} with $supp(f(t,x)) \subset [-L,L]$ with $\essinf_{(t,x)\in B_2}\int f(t,x) \;dv > M$, there is an $\eps_0 = \eps_0(L,M)$, $\widetilde C = \widetilde C(L,M) > 0$, and $\alpha = \frac{1}{21}$ such that, for any $0 < \eps < \eps_0$, setting $\eta = \widetilde{C}\eps^{\alpha}$, if $f$ solves \eqref{eqn:kinetic} with velocities supported in $[-L,L]$, then
\begin{align}\label{eqn:DeGiorgi_L1_control}
\int_{B_2}\int_b^L f(t,x,v) \;dv < \eps &\qquad\text{ implies }\qquad  f(t,x,v) = 0 \quad\text{for}\quad(t,x,v) \in B_1\times[b+\eta, L]\\
\int_{B_2}\int_{-L}^a f(t,x,v) \;dv < \eps &\qquad\text{ implies }\qquad  f(t,x,v) = 0 \quad\text{for}\quad(t,x,v) \in B_1\times[-L,a-\eta].
\end{align}
\end{lemma}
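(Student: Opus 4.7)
The plan is to run a De Giorgi first lemma iteration, where Lemma \ref{lem:DeGiorgi_estimate} provides the nonlinear gain and the non-vacuum hypothesis $\rho \ge M$ provides the crucial measure-control that replaces what, in the scalar case, is automatic. I will only treat the first implication; the second is symmetric.

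First, fix a geometric cascade of radii and levels: set
\begin{equation*}
r_k = 1 + 2^{-k}, \qquad l_k = b + \eta\bigl(1 - 2^{-k}\bigr), \qquad E_k = \int_{B_{r_k}} \int_{l_k}^L f(t,x,v)\;dvdxdt,
\end{equation*}
so that $r_k \downarrow 1$, $l_k \uparrow b+\eta$, and $E_0 \le \eps$ by hypothesis. The aim is to show $E_k \to 0$, after which the pointwise vanishing of $f$ on $B_1\times[b+\eta,L]$ follows since $f$ is a characteristic function of a $v$-interval for a.e.\ $(t,x)$.

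Second, I will convert the small-mass bound into a small-support bound using the non-vacuum assumption. Let $A_k = \{(t,x)\in B_{r_k} : \int_{l_k}^L f(t,x,v)\;dv > 0\}$. Writing $f(t,x,v) = \chi_{[\lambda_1,\lambda_2]}(v)$, on $A_{k+1}$ we have $\lambda_2(t,x) > l_{k+1}$, hence
\begin{equation*}
\int_{l_k}^L f(t,x,v)\;dv \ge \min\bigl(\lambda_2 - l_k,\ \lambda_2 - \lambda_1\bigr) \ge \min\bigl(l_{k+1}-l_k,\ M\bigr) = \eta\,2^{-(k+1)},
\end{equation*}
provided $\eta < M$ (which will hold once $\eps_0$ is small enough). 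Integrating over $A_{k+1}$ gives $|A_{k+1}| \le 2^{k+1}\eta^{-1} E_k$. This is the step where the scalar argument and ours diverge: in the scalar case one controls $|A_k|$ by $E_k$ directly through the level structure of $f$, whereas here the interval structure forces us to use $\rho \ge M$ to prevent a thin sliver of support from carrying negligible mass.

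Third, I apply Lemma \ref{lem:DeGiorgi_estimate} with $r=r_{k+1}$, $R=r_k$, $l_1=l_{k+1}$, $l_2=l_k$, and $\alpha$ chosen close enough to $1/2$ that $\beta := \tfrac{1+\theta_0}{2} + \alpha > 1$ (any $\alpha > \tfrac{3}{7}$ works). Combining with the bound on $|A_{k+1}|$ above yields a recursion of the form
\begin{equation*}
E_{k+1} \le C(L)\,\eta^{-(2\theta_0+\alpha)}\, B^{k}\, E_k^{\beta},
\end{equation*}
for some $B = B(\theta_0,\alpha) > 1$. A standard iteration lemma then guarantees $E_k \to 0$ as $k\to\infty$ provided $E_0 \le C_0\,\eta^{(2\theta_0+\alpha)/(\beta-1)}$ for an appropriate $C_0 = C_0(L,M)$. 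Solving for $\eta$ in terms of $\eps \ge E_0$ yields $\eta = \widetilde{C}\eps^{\alpha_\star}$ with $\alpha_\star = (\beta-1)/(2\theta_0+\alpha) \in (0,1)$, which is the relation claimed.

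Finally, since $E_k \to 0$ and $E_k = \int_{B_{r_k}}\int_{l_k}^L f \;dvdxdt$ with $r_k \downarrow 1$ and $l_k \uparrow b+\eta$, the monotone convergence theorem gives $\int_{B_1}\int_{b+\eta}^L f = 0$, so $f$ vanishes a.e.\ on $B_1\times[b+\eta,L]$ as required. The main obstacle is the second step: verifying that the intrinsic scale $\eta$ at which we stop the iteration is compatible with $\eta < M$ and with the initial smallness threshold for $E_0$; both reduce to taking $\eps_0(L,M)$ small enough, but the dependence must be tracked carefully through the cascade.
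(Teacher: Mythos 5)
Your proof is correct and follows essentially the same route as the paper: the same dyadic cascade of radii and levels, the same Chebyshev conversion of small mass into small support using the non-vacuum bound $\rho \ge M$ (with the same observation that $l_{k+1}-l_k = \eta 2^{-(k+1)}$ controls the measure when $\eta < M$), the same application of Lemma~\ref{lem:DeGiorgi_estimate}, and the same resolution of the resulting superlinear recursion into a power-law relation $\eta = \widetilde C\eps^{\alpha_\star}$. The only differences are cosmetic: you absorb $b$ into the definition of $l_k$, and you invoke a standard De Giorgi iteration lemma where the paper writes out the $2^{(k+1)[\cdots]}$ bookkeeping explicitly.
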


\begin{proof}
Let us define the scales $r_k = 1 + 2^{-k}$, $B_k = B_{r_k}(0)$, and $l_k = \eta(1- 2^{-k})$, for $\eta < \eps_0$. Let us also fix $\theta = \theta_0$ from Lemma \ref{lem:DeGiorgi_estimate} and $\delta = \theta/2$. Then, we define our iteration quantities as
\begin{equation}
U_k = \int_{B_k}\int_{b + l_k}^L f(t,x,v) \;dvdxdt\qquad \text{and} \qquad V_k = \int_{B_k}\int_{-L}^{a-l_k} f(t,x,v) \;dvdxdt.
\end{equation}
Our goal is to show that for an appropriate choices of $\eta$, $\eps_0$, $U_k\rightarrow 0$ and $V_k \rightarrow 0$ provided $U_0,V_0 < \eps < \eps_0$. Now, using Lemma \ref{lem:DeGiorgi_estimate} and $U_k,\ V_k$ are decreasing, we obtain the following inequalities:
\begin{align}\label{eqn:DeGiorgi_ineq_separate}
U_{k+1} &\le \frac{C U_k^{\frac{1+\theta}{2}}}{(l_{k+1} - l_k)^{2\theta}(r_k - r_{k+1})^\theta}\left|\left\{\int_{b+l_{k+1}}^L f(t,x,v) \ dv > 0\right\} \cap B_{k+1}\right|^{\frac{1-\delta}{2}}\\
V_{k+1} &\le \frac{C V_k^{\frac{1+\theta}{2}}}{(l_{k+1} - l_k)^{2\theta}(r_k - r_{k+1})^\theta}\left|\left\{\int^{a-l_{k+1}}_{-L} f(t,x,v) \ dv > 0\right\} \cap B_{k+1}\right|^{\frac{1-\delta}{2}}.
\end{align}
Next, we estimate the measure of the sets appearing on the right hand sides. We note, if $\int_{b+l_{k+1}}^L f(t,x,v)\; dv > 0$, then because $\int_{-L}^L f \;dv > M$ on $B_2$ and $f(t,x, \cdot)$ is the characteristic function of an interval, we must have
$$\int_{b + l_k}^L f(t,x,v)\; dv > \min(M,l_{k+1}-l_k).$$
Thus, by Chebychev's inequality we have
\begin{equation}
\left|\left\{\int_{b+l_{k+1}}^L f(t,x,v) \ dv > 0\right\} \cap B_{k+1}\right| \le \frac{U_k}{\min(M,l_{k+1}-l_k)} = \frac{U_k}{\min(M,\eta 2^{-k-1})}.
\end{equation}
Using a similar argument for $\int_{-L}^{a-l_{k+1}} f(t,x,v)\;dv$, we obtain 
\begin{align}
U_{k+1} &\le \frac{C U_k^{\frac{1+\theta}{2} + \frac{1-\delta}{2}}}{(l_{k+1} - l_k)^{2\theta}(r_k - r_{k+1})^\theta \min(M,l_{k+1}-l_k)^{\frac{1-\delta}{2}}}\\
V_{k+1} &\le \frac{C V_k^{\frac{1+\theta}{2} + \frac{1-\delta}{2}}}{(l_{k+1} - l_k)^{2\theta}(r_k - r_{k+1})^\theta  \min(M,l_{k+1}-l_k)^{\frac{1-\delta}{2}}}.
\end{align}
Now, if $\eta < M$, we have $\min(M, l_{k+1}-l_k) = l_{k+1}-l_k$ for each $k$, and
\begin{align}
U_{k+1} &\le \frac{C U_k^{1 + \frac{\theta}{4}}}{(l_{k+1} - l_k)^{\frac{1}{2} + \frac{7\theta}{4}}(r_k - r_{k+1})^\theta} = \frac{C2^{(k+1)\left[\frac{11\theta}{4} + \frac{1}{2}\right]}U_k^{1 + \frac{\theta}{4}}}{\eta^{\frac{7\theta}{4} + \frac{1}{2}}}\\
V_{k+1} &\le \frac{C V_k^{1 + \frac{\theta}{4}}}{(l_{k+1} - l_k)^{\frac{1}{2} + \frac{7\theta}{4}}(r_k - r_{k+1})^\theta} = \frac{C2^{(k+1)\left[\frac{11\theta}{4} + \frac{1}{2}\right]}V_k^{1 + \frac{\theta}{4}}}{\eta^{\frac{7\theta}{4} + \frac{1}{2}}}.
\end{align}
Finally, setting $\alpha^{-1} = \frac{7\theta+2}{\theta}$, we obtain
\begin{equation}\label{eqn:DeGiorgi_iteration}
\frac{U_{k+1}}{\eta^{\frac{1}{\alpha}}} \le C2^{(k+1)\left[\frac{11\theta}{4} + \frac{1}{2}\right]} \left(\frac{U_k}{\eta^\frac{1}{\alpha}}\right)^{1+\frac{\theta}{4}} \quad \text{and} \quad \frac{V_{k+1}}{\eta^{\frac{1}{\alpha}}} \le C2^{(k+1)\left[\frac{11\theta}{4} + \frac{1}{2}\right]} \left(\frac{V_k}{\eta^\frac{1}{\alpha}}\right)^{1+\frac{\theta}{4}}.
\end{equation}
Therefore, \eqref{eqn:DeGiorgi_iteration} implies that there is a sufficiently large constant $\widetilde C$ such that if 
$$\frac{U_0}{\eta^{\frac{1}{\alpha}}} = \frac{\|f\|_{L^1(B(2)\times[b,L])}}{\eta^{\frac{1}{\alpha}}} < \frac{\eps}{\eta^{\frac{1}{\alpha}}} \le \frac{1}{\widetilde C^{\frac{1}{\alpha}}},$$
and similarly for $V_0$, then
$$\lim_{k\rightarrow \infty} U_k = \lim_{k\rightarrow \infty} V_k = 0.$$
Choosing $\eta = \widetilde C\eps^\alpha$ and $\eps_0 = (M/\widetilde{C})^{1/\alpha}$ and noting $\alpha = \frac{1}{21}$ completes the proof.
\end{proof}

We are now ready to prove Proposition \ref{prop:Linfty}:
\begin{proof}[Proof of Proposition \ref{prop:Linfty}]
We first note that $u = (\rho,m)$ satisfies $\|\rho\|_{L^\infty(B_2)} + \|\rho/m\|_{L^\infty(B_2)} \le \Gamma$ so the corresponding $f$ solving \eqref{eqn:kinetic}, has a uniform velocity bound $L = 2\Gamma$ on $B_2$. Now, we wish to apply Lemma \ref{lem:DeGiorgi_iteration} to this $f$. Thus, we note $\rho(t,x) \ge M$ for almost every $(t,x) \in B_2$ exactly implies $\int f\; dv \ge M$. Also, since $\overline{u} = (\overline{\rho},\overline{m})$, is a constant solution to \eqref{eqn:Euler}, the corresponding (constant) kinetic function is $\chi_{[a,b]}$ for $a = \frac{\overline{m}}{\overline{\rho}} - \frac{\overline{\rho}}{2} = \lambda_1(\overline{u})$ and $b = \frac{\overline{m}}{\overline{\rho}} + \frac{\overline{\rho}}{2} = \lambda_2(\overline{u})$.
Since $f$ is given via the formula,
$$f(t,x,v) = \chi_{\left[\frac{m}{\rho}-\frac{\rho}{2}, \frac{m}{\rho}+\frac{\rho}{2}\right]}(v) = \chi_{[\lambda_1(u),\lambda_2(u)]}(v),$$
we have
\begin{equation}
\int_{B_2} (\lambda_1(\overline{u}) - \lambda_1(u))_+ = \int_{B_2}\int_{-L}^a f(t,x,v) \;dvdxdt \qquad \text{and} \qquad \int_{B_2} (\lambda_2(u) - \lambda_2(\overline{u}))_+ = \int_{B_2}\int_b^L f(t,x,v) \;dvdxdt.
\end{equation}
Therefore, for $0 < \eps < \eps_0$, where $\eps_0 = \eps_0(L,M)$ is from Lemma \ref{lem:DeGiorgi_iteration}, we have 
\begin{align}
&\text{if }\quad\|\left(\lambda_1(\overline{u}) - \lambda_1(u)\right)_+\|_{L^1(B_2)} \le \eps \qquad \text{ then }\quad f(t,x,v) = 0, \ (t,x) \in B_1, \ v < a - \widetilde{C}\eps^\alpha,\\
&\text{if }\quad\|\left(\lambda_2(u) - \lambda_2(\overline{u})\right)_+\|_{L^1(B_2)} \le \eps \qquad \text{ then }\quad f(t,x,v) = 0, \ (t,x) \in B_1, \ v > b + \widetilde{C}\eps^\alpha.
\end{align}
Thus, we compute pointwise for $(t,x) \in B_1$,
\begin{align}
&\left(\lambda_1(\overline{u}) - \lambda_1(u(t,x))\right)_+ = \int_{-L}^a f(t,x,v)\; dv = \int_{a-\widetilde{C}\eps^{\alpha}}^a f(t,x,v)\; dv \le \widetilde{C}\eps^{\alpha}\\
&\left(\lambda_2(u(t,x)) - \lambda_2(\overline{u})\right)_+ = \int_{b}^L f(t,x,v)\; dv = \int_b^{b+\widetilde{C}\eps^{\alpha}} f(t,x,v)\; dv \le \widetilde{C}\eps^{\alpha},
\end{align}
which completes the proof.
\end{proof}


\section{Semicontinuity}\label{sec:semicontinuity}

In this section, we present our first application of our partial regularity result and prove Theorem \ref{thm:cont}. We begin by recalling several notations and definitions for versions of $L^\infty$ functions. First, for $g\in L^1_{loc}(\R^n)$, we say that $x$ is a point of $VMO$ (or Vanishing Mean Oscillation) for a function $g$, if
\begin{equation}
\lim_{r\rightarrow 0^+}\frac{1}{|B_r(x)|}\int_{B_r(x)} \left|g(y) - \fint_{B_r(x)} g(z)\;dz\right| \;dy = 0.
\end{equation}
Note, that this is a slight relaxation of the notion of a Lebesgue point, which further requires 
$$\lim_{r\rightarrow 0^+}\fint_{B_r(x)} g(z)\;dz = g(x).$$
Second, we recall the upper and lower semicontinuous envelopes, which we denote by $\overline{g}$ and $\underline{g}$ are defined via
\begin{equation}
\overline{g}(x) = \lim_{r\rightarrow\infty}\esssup_{y\in B_r} g(y) \qquad \text{and} \qquad \underline{g}(x) = \lim_{r\rightarrow\infty}\essinf_{y\in B_r} g(y).
\end{equation}
One can check that for $g\in L^\infty$, $\overline{g}$ is upper semicontinuous and $g\le \overline{g}$ almost everywhere and, similarly, $\underline{g}$ is lower semicontinuous and $\underline{g} \le g$ almost everywhere. Finally, we pick the following version for the scalar functions we study:
\begin{equation}
\hat{g}(x) = \liminf_{r\rightarrow \infty} \fint_{B_r(x)} g(y)\;dy.
\end{equation}
With this in mind, we will deduce Theorem \ref{thm:cont} immediately from the following slightly more precise claim:
\begin{proposition}\label{prop:cont}
Suppose $u:\R^+ \times \R \rightarrow \R^2$ is a bounded entropy solution to \eqref{eqn:Euler}. Then, for any $(t,x)$ a point of $VMO$ for $\lambda_1(u)$ and $\lambda_2(u)$ with $\underline{\rho}(t,x) \neq 0$, $(t,x)$ is a Lebesgue point of $\rho$, $m$, $\lambda_1(u)$, and $\lambda_2(u)$. Moreover, 
\begin{equation}
\hat{\rho}(t,x) = \overline{\rho}(t,x), \quad \hat{\lambda}_1(t,x) = \underline{\lambda}_1(t,x), \quad \hat{\lambda}_2(t,x) = \overline{\lambda}_2(t,x),
\end{equation}
and
\begin{equation}
\hat{m}(t,x) = \begin{cases}
	&\overline{m}(t,x) \text{ if } (t,x)\in\{\underline{\lambda}_1 \ge 0 \}\\
	&\underline{m}(t,x) \text{ if }(t,x)\in\{ \overline{\lambda}_2 \le 0 \}
	\end{cases}.
\end{equation}
\end{proposition}

\medskip

\noindent \underline {\bf {The Riemann Invariants}}

\medskip

Now, we fix $u = (\rho,m)$ a bounded entropy solution and $(t,x)$ a point of $VMO$ of $\lambda_1(t,x) \coloneqq \lambda_1(u(t,x))$ and $\lambda_2(u)\coloneqq \lambda_2(u(t,x))$ such that $\underline{\rho}(t,x) \neq 0$.
We will prove $(t,x)$ is a Lebesgue point for $\lambda_1$ and $\lambda_2$ with value $\underline{\lambda}_1$ or $\overline{\lambda}_2$, respectively. To this end, we pick any sequence of scales $r_n \rightarrow 0^+$. Then, using $u$ is bounded, we conclude there is a subsequence $r_{n_k}$ along which $\fint_{B_{r_{n_k}}(t,x)} \lambda_1(\tau,y)\;d\tau dy \rightarrow \Lambda$. Note, since $(t,x)$ is $VMO$ for $\lambda_1$, along $r_{n_k}$ we have
\begin{equation}
\lim_{k\rightarrow \infty} \frac{1}{|B_{r_{n_k}}(t,x)|} \int_{B_{r_{n_k}}(t,x)} |\lambda_1(u(\tau,y)) - \Lambda| \;d\tau dy = 0.  
\end{equation}
Consider the blow-up sequence $u_k(\tau,y) \coloneqq u((t,x) + r_{n_k}(\tau,y))$. Note that by translation invariance and scaling, each blow-up $u_k$ is a solution to \eqref{eqn:Euler} satisfying $\|\rho_k\|_{L^\infty} + \|\frac{m_k}{\rho_k}\|_{L^\infty} \le \Gamma$, where $\Gamma = \|\rho\|_{L^\infty} + \|\frac{m}{\rho}\|_{L^\infty}$. Moreover, $\rho_k \ge \frac{\underline{\rho}(t,x)}{2}$ almost everywhere on $B_2$ for each $k$ sufficiently large.  Thus, for $k$ sufficiently large so that $\|\lambda_1(u_k) - \Lambda\|_{L^1(B_2)} < \eps_0$, where $\eps_0$ is as in Proposition \ref{prop:Linfty}, we conclude
\begin{equation}
\esssup_{B_1} \left[\Lambda - \lambda_1(u_k)\right]_+ \lesssim \|\Lambda - \lambda_1(u_k)\|_{L^1(B_{2})}^\alpha \rightarrow 0.
\end{equation}
Therefore, on one hand, we conclude
\begin{equation}
\Lambda \le \underline{\lambda}_1(t,x).
\end{equation}
On the other hand, from the definition of $\Lambda$, we have
\begin{equation}
\Lambda = \lim_{k\rightarrow \infty} \fint_{B_{r_{n_k}}(t,x)}\lambda_1(\tau,y)\;d\tau dy \ge \lim_{k\rightarrow \infty} \essinf_{(\tau,y)\in B_{r_{n_k}}(t,x)} \lambda_1(\tau,y) = \underline{\lambda}_1(t,x).
\end{equation}
From the uniqueness of the limit, we conclude $(t,x)$ is a Lebesgue point of $\lambda_1$ with value $\underline{\lambda}_1(t,x)$. The same argument for $\lambda_2$ yields $(t,x)$ is a Lebesgue point of $\lambda_2$ with value $\overline{\lambda}_2(t,x)$.
The definitions of $\hat{\lambda}_1$ and $\hat{\lambda}_2$ then guarantee $\hat{\lambda}_1(t,x) = \underline{\lambda}_1(t,x)$ and $\hat{\lambda}_2(t,x) = \overline{\lambda}_2(t,x)$.

\medskip

\noindent \underline {\bf {The Density}}

\medskip

The main tool for analyzing $\rho$ is the algebraic identity, $\rho = \lambda_2(u) - \lambda_1(u)$. Thus, we immediately see that $(t,x)$ a point of $VMO$ of $\lambda_1(u)$ and $\lambda_2(u)$ implies $(t,x)$ is a point of $VMO$ for $\rho$. Also, since $(t,x)$ is a Lebesgue point for $\lambda_1$ and $\lambda_2$, we find
\begin{equation}
\hat{\rho} = \lim_{r\rightarrow 0^+} \fint_{B_r(t,x)} \rho =  \lim_{r\rightarrow 0^+} \fint_{B_r(t,x)} \left(\lambda_2 - \lambda_1\right) = \overline{\lambda}_2(t,x) - \underline{\lambda}_1(t,x),
\end{equation}
and $(t,x)$ is a Lebesgue point for $\rho$. Next, a few basic inequalities imply
\begin{equation}
\lim_{r\rightarrow 0^+}\fint_{B_r(t,x)}\rho \le \lim_{r\rightarrow 0^+}\esssup_{y\in B_r(t,x)} \rho(y) \le \lim_{r\rightarrow 0^+}\left[\esssup_{y \in B_r(t,x)} \lambda_2(y) - \essinf_{y\in B_r(t,x)} \lambda_1(y)\right] = \overline{\lambda}_2(t,x) - \underline{\lambda}_1(t,x).
\end{equation}
Thus, we conclude
\begin{equation}
\hat{\rho}(t,x) = \overline{\rho}(t,x) = \overline{\lambda}_2(t,x) - \underline{\lambda}_1(t,x).
\end{equation}

\medskip

\noindent \underline {\bf {The Momentum}}

\medskip

The main tool for analyzing $m$ is the algebraic identity, $m = \frac{\lambda_2^2 - \lambda_1^2}{2}$. Just as for the density, we see $(t,x)$ a point of $VMO$ of $\lambda_1$ and $\lambda_2$ implies $(t,x)$ is a Lebesgue point for $m$ with
\begin{equation}
\hat{m} = \lim_{r\rightarrow 0^+} \fint_{B_r(t,x)} m =  \lim_{r\rightarrow 0^+} \frac{1}{2|B_r(t,x)|}\int_{B_r(t,x)} \lambda_2^2(y) - \lambda_1^2(y) \;dy = \frac{\overline{\lambda}_2^2(t,x) - \underline{\lambda}_1^2(t,x)}{2}.
\end{equation}
Now, we have two cases. On one hand, if $\underline{\lambda}_2(t,x)\ge \underline{\lambda}_1(t,x) \ge 0$, then
\begin{equation}
\lim_{r\rightarrow 0^+}\fint_{B_r(t,x)} m \le \lim_{r\rightarrow 0^+}\esssup_{y\in B_r(t,x)} m(y) \le \frac{1}{2} \lim_{r\rightarrow 0^+}\left[\esssup_{y \in B_r(t,x)} \lambda_2^2(y) - \essinf_{y\in B_r(t,x)} \lambda_1^2(y)\right] = \frac{\overline{\lambda}_2^2(t,x) - \underline{\lambda}_1^2(t,x)}{2}.
\end{equation}
In this case, $\hat{m}(t,x) = \overline{m}(t,x)$. On the other hand, if $\overline{\lambda}_1(t,x) \le \overline{\lambda}_2(t,x) \le 0$, then
\begin{equation}
\lim_{r\rightarrow 0^+}\int_{B_r(t,x)}m \ge \lim_{r\rightarrow 0^+}\essinf_{y\in B_r(t,x)} m(y) \ge \frac{1}{2} \lim_{r\rightarrow 0^+}\left[\essinf_{y \in B_r(t,x)} \lambda_2^2(y) - \esssup_{y\in B_r(t,x)} \lambda_1^2(y)\right] = \frac{\overline{\lambda}_2^2(t,x) - \underline{\lambda}_1^2(t,x)}{2}.
\end{equation}
In this case, $\hat{m}(t,x) = \underline{m}(t,x)$.

\section{Improved Traces}\label{sec:ODE}

In this section, we present our second application of our partial regularity result and prove Theorem \ref{thm:improved_traces}. To use Proposition \ref{prop:Linfty}, we need some decay of the $L^1$ oscillations as we blow up around a fixed point $(t,h(t))$. In the proof of Theorem \ref{thm:cont}, this is provided for almost every $(t,x) \in \R^+ \times \R$ by Lebesgue's Differentiation Theorem. However, $\set{(t,h(t)) \ | \ t\in\R^+}$ is a Lebesgue null set of $\R^+\times \R$. Therefore, we first use the traces of Theorem \ref{thm:trace} to obtain $L^1$-convergence of the blow-ups.

\medskip

\noindent \underline {\bf {Step 1: $L^1$-Convergence of Blow-ups}}

\medskip

We use the strong trace property provided by Theorem \ref{thm:trace}, which was established in Section \ref{sec:trace} to ensure blow-ups around points $(t,h(t))$ converge in $L^1$. Note that instead of blowing up around points $(t,h(t))$ along the curve $h(t)$ as was done in Section \ref{sec:trace}, we will take a blow-up that preserves equation \eqref{eqn:Euler}.
\begin{lemma}\label{lem:DeGiorgi_L1}
Suppose $u:\R^+ \times \R \rightarrow \R^2$ with $u\in L^\infty$ satisfies the strong trace property in the sense of Definition \ref{defn:strong_traces}. Then, for any $h:[0,T]\rightarrow \R$ Lipschitz and for any sequence $r_k \rightarrow 0^+$, there is a subsequence, still denoted $r_k$, such that:\newline\newline for almost every $t$, the half-spaces $H_t^+ \coloneqq \{(\tau,y) \ | \ y \ge \dot{h}(t)\tau \}$ and $H_t^- \coloneqq \{(\tau,y) \ | \ y \le \dot{h}(t)\tau \}$ are well-defined and for each compact $K^\pm \subset H_t^\pm$,
\begin{equation}\label{eqn:scales}
\lim_{k\rightarrow\infty} \int_{K^\pm} \left|u(t + r_k\tau,h(t) + r_k y) - u^\pm(t)\right| \;d\tau dy = 0.
\end{equation}
\end{lemma}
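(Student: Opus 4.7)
The plan is to establish the integrated statement
\begin{equation}
I_k^\pm := \int_0^T \int_{K^\pm} \abs{u(t-\eta_k\tau, h(t)\pm\eta_k y) - u^\pm(t)} \, d\tau \, dy \, dt \longrightarrow 0 \quad \text{as } k \to \infty,
\end{equation}
for each fixed compact $K^\pm$ lying strictly inside $H_t^\pm$. Once $I_k^\pm \to 0$, extracting a diagonal subsequence along a countable exhaustion of $H_t^\pm$ by compacts recovers the pointwise (in $t$) convergence for almost every $t$ and every such compact $K^\pm$ simultaneously.

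For $(\tau,y) \in K^+$ (the $-$ case is symmetric), I would set $s = t - \eta_k\tau$ and $y' := h(t) + \eta_k y - h(s)$, so that $u(t-\eta_k\tau, h(t)+\eta_k y) = u(s, h(s) + y')$. The triangle inequality gives
\begin{equation}
\abs{u(s, h(s)+y') - u^+(t)} \le \abs{u(s,h(s)+y') - u^+(s)} + \abs{u^+(s) - u^+(t)}.
\end{equation}
Integrating the second term over $K^+ \times [0,T]$, using Tonelli to place the $t$-integral innermost, yields $\int_{K^+} \int_0^T \abs{u^+(t-\eta_k\tau) - u^+(t)}\,dt\,d\tau\,dy$, which tends to zero by $L^1$-continuity of translations applied to $u^+ \in L^\infty([0,T]) \subset L^1([0,T])$, uniformly in $\tau$ over the bounded $\tau$-projection of $K^+$.

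The main step is bounding the first term via the strong trace property. Since $h$ is Lipschitz, $\abs{y'} \le \eta_k(\abs{y} + \mathrm{Lip}(h)\abs{\tau}) \le C_{K^+}\eta_k$ uniformly over $(\tau,y) \in K^+$. Moreover, since $K^+$ is a compact subset of the open half-space $H_t^+$ (the half-space on which $y'$ is bounded below by a positive multiple of $\eta_k$ in the blow-up limit), Lebesgue's differentiation theorem applied to $\dot h$ at $t$ yields $h(s) - h(t) = -\eta_k\tau\dot h(t) + o(\eta_k)$ uniformly in $\tau$ on compacts, valid at almost every $t$. Hence for a.e. $t$ there exists $c_{K^+}(t) > 0$ with $y' \ge c_{K^+}(t)\eta_k$ uniformly in $(\tau,y) \in K^+$ once $\eta_k$ is small enough, so $y'$ lies in the interval $(0, C_{K^+}\eta_k)$. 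Fubini combined with the strong trace identity
\begin{equation}
\lim_{n\to\infty} \esssup_{y'' \in (0,1/n)} \int_0^T \abs{u(s, h(s)+y'') - u^+(s)}\,ds = 0
\end{equation}
then bounds the first-term contribution to $I_k^+$ by $\abs{K^+}\delta_k$ with $\delta_k \to 0^+$.

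Putting the two bounds together yields $I_k^\pm \to 0$; a subsequence extraction gives vanishing of the inner integral at almost every $t$, and diagonalization over countably many compacts exhausting $H_t^\pm$ produces a single subsequence valid simultaneously for all compact $K^\pm \subset H_t^\pm$. The main technical obstacle is the uniform-in-$\tau$ Lebesgue approximation of $h$ needed to guarantee $y' > 0$ uniformly over $K^+$ at sufficiently small scales; this is precisely what confines the conclusion to a.e. $t$, namely the intersection of the Lebesgue-point set of $\dot h$ with the subsequence-selection null set.
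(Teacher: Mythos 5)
Your overall architecture --- proving an integrated-in-$t$ estimate $I_k^\pm \to 0$, then diagonalizing over a countable exhaustion of $H_t^\pm$ by compacts to get the a.e.-$t$ subsequence --- matches the paper, as do your use of $L^1$-continuity of translations on $u^+$ and of the Lebesgue point property of $\dot h$ at a.e.\ $t$. However, there is a genuine gap in the step where you invoke the strong trace property to kill the first term.

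After your substitution $s = t-\eta_k\tau$, $y' = h(t)+\eta_k y - h(s)$, the offset from the curve,
\begin{equation}
y' = h(s+\eta_k\tau) - h(s) + \eta_k y,
\end{equation}
depends on the integration variable $s$ (as well as on $\tau,y$). The strong trace property gives smallness of $\esssup_{y''\in(0,\eps)}\int_0^T |u(s,h(s)+y'')-u^+(s)|\,ds$, i.e.\ control of the $s$-integral for each \emph{fixed} offset $y''$. It does not control $\int_0^T |u(s,h(s)+y'(s))-u^+(s)|\,ds$ for a function $y'(s)$ valued in $(0,\eps)$: even with $u$ bounded, one can construct a bounded nonnegative $v(s,y'')$ with $\esssup_{y''\in(0,\eps)}\int_0^T v(s,y'')\,ds$ arbitrarily small while $\int_0^T v(s,y'(s))\,ds$ stays of order $T$, simply by concentrating $v$ in a thin neighborhood of the graph of a Lipschitz $y'(\cdot)$. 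Fubini does not repair this, because the inner $s$-integral is taken over a graph, not a product region, so there is no $y''$-average to bound it by. Hence the claimed bound ``first-term contribution $\le |K^+|\delta_k$'' does not follow.

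The paper avoids this by \emph{not} changing variables in the argument of $u$ at this stage. It first works with $U_1^\eta(t,\tau,y)=|u(t-\eta\tau,h(t-\eta\tau)+\eta y)-u^+(t)|$, where the offset from the curve at time $t-\eta\tau$ is the fixed number $\eta y$, so the strong trace property (via \eqref{eqn:DeGiorgi_trace_scale1}) and translation continuity of $u^+$ (via \eqref{eqn:DeGiorgi_trace_scale2}) apply directly. The transition to the $h(t)$-centered frame is done afterwards in \eqref{eqn:DeGiorgi_trace_scale3} by shifting the $y$-\emph{domain of integration} rather than the argument of $u$: one compares $\int_0^Y U_1^\eta\,dy$ to $\int_{-\dot h(t)\tau}^{Y-\dot h(t)\tau} U_2^\eta\,dy$, bounds the discrepancy by $4\|u\|_{L^\infty}\,|\gamma_\eta+\dot h(t)\tau|$ using only the $L^\infty$ bound, and then integrates this in $(t,\tau)$ and kills it by Lebesgue differentiation of $\dot h$. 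In short, the cost of the moving frame is paid in $\|u\|_{L^\infty}$, not in the strong trace. To repair your argument you should reorganize along those lines: apply the strong trace in the $h(s)$-frame with the fixed offset $\eta_k y$, and treat the mismatch between the $h(s)$- and $h(t)$-frames as a shift of the $y$-integration window controlled by the $L^\infty$ norm.
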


\begin{proof}
We will prove the statements only for the right trace. First, we note that the strong trace property of $u$ and continuity of translations on $L^1$ together imply for each $\T, Y > 0$,
\begin{equation}\label{eqn:DeGiorgi_trace_scale1}
\lim_{r \rightarrow 0^+} \int_0^T\int_{-\T}^{\T}\int_0^Y \left|u(t + r\tau, h(t + r \tau) + ry) - u^+(t + r\tau)\right| \;dyd\tau dt = 0.
\end{equation}
Next, we note
\begin{equation}\label{eqn:DeGiorgi_trace_scale2}
\lim_{r\rightarrow 0^+} \int_0^T\int_{-\T}^{\T}|u^+(t + r\tau) - u^+(t)| \;d\tau dt = 0.
\end{equation}
Also, taking $\gamma_r = \frac{h(t + r\tau) - h(t)}{r}$, $U_1^r(t,\tau,y) = \left|u(t+r\tau, h(t +r\tau) + r y) - u^+(t)\right|$, and\newline $U_2^r(t,\tau,y) = \left|u(t+r\tau, h(t) + ry) - u^+(t)\right|$, by a change of variables and the fundamental theorem of calculus, we obtain the bound
\begin{equation}\label{eqn:DeGiorgi_trace_scale3}
\begin{aligned}
&\left|\int_0^T\int_{-\T}^{\T}\left(\int_0^Y U_1^r(t,\tau,y) \;dy - \int_{-\dot{h}(t)\tau}^{Y-\dot{h}(t)\tau} U_2^r(t,\tau,y)  \;dy\right)\;d\tau dt\right|\\
	&\qquad\qquad= \left|\int_0^T\int_{-\T}^{\T}\left(\int_{\gamma_r}^{Y + \gamma_r} U_2^r(t,\tau,y) -\int_{\dot{h}(t)\tau}^{Y + \dot{h}(t)\tau} U_2^r(t,\tau,y)\;dy\right)\;d\tau dt\right|\\
	&\qquad\qquad\le 4\|u\|_{L^\infty}\int_0^T\int_{-\T}^{\T} |\gamma_r - \dot{h}(t)\tau| \;d\tau dt\\
	&\qquad\qquad\le 4\|u\|_{L^\infty}\int_0^T\int_{-\T}^{\T} \frac{1}{r}\int^{t+r\tau}_t \left|\dot{h}(s) - \dot{h}(t)\right| \;dsd\tau dt,
\end{aligned}
\end{equation}
where the right hand side converges to $0$ as $r \rightarrow 0^+$ by the Lebesgue Differentiation Theorem. Combining \eqref{eqn:DeGiorgi_trace_scale1}, \eqref{eqn:DeGiorgi_trace_scale2}, and \eqref{eqn:DeGiorgi_trace_scale3}, we obtain for any $\T > 0$, $Y > 0$,
\begin{equation}
\lim_{r \rightarrow 0^+} \int_0^T\int_{-\T}^{\T}\int_{\dot{h}(t)\tau}^{Y+\dot{h}(t)\tau} \left|u(t + r\tau,h(t) + r y) - u^+(t)\right| \;dyd\tau dt = 0.
\end{equation}
Defining $H_t^+ \coloneqq \set{(\tau, y) \ | \ y \ge \dot{h}(t)\tau}$, for any sequence $r \rightarrow 0^+$, by a diagonalization argument, we can find a subsequence along which \eqref{eqn:scales} holds.
\end{proof}

\medskip

\noindent \underline {\bf {Step 2: Uniform Convergence of Blow-ups}}

\medskip

In this step, we combine the $L^1$-convergence result of the previous step with the partial regularity result of Proposition \ref{prop:Linfty} to obtain for almost every $t$ the blow-ups converge away from a shock. In local $(\tau, y)$ coordinates, a shock can only form at the boundary of $H^+_t$, i.e. along the line $y = \dot{h}(t)\tau$.
\begin{lemma}\label{lem:DeGiorgi}
Suppose $u = (\rho, m)$ is a bounded entropy solution to \eqref{eqn:Euler} and $\essinf \rho > 0$. Then, for any $h:[0,T]\rightarrow \R$ a Lipschitz curve, for almost every $t$,
\begin{equation}\label{eqn:convergence_trace}
\lim_{r\rightarrow 0^+} \esssup_{(\tau,y)\in B_{r}(0)} \left( \left[\min(\lambda_1^+(t),\lambda_1^-(t)) - \lambda_1(t + \tau, h(t) + y)\right]_+ + \left[\lambda_2(t + \tau, h(t) + y) - \max(\lambda_2^-(t),\lambda_2^+(t))\right]_+ \right) = 0.
\end{equation} 
Moreover, for any fixed subset $K^\pm$ of $\R^2$ compactly contained in $H^\pm_t$ for almost every $t$, for $g = - \lambda_1, \ \lambda_2, \text{ or } \rho$,
\begin{align}\label{eqn:convergence}
\lim_{r\rightarrow 0^+}\int_0^T\esssup_{(\tau,y) \in K^\pm} \left[g(t + r\tau, h(t) + ry) - g^\pm(t)\right]_+ \;dt= 0.
\end{align}
\end{lemma}

\begin{proof}
Let us fix $u$ an entropy solution to \eqref{eqn:Euler} satisfying $\|\rho\|_{L^\infty} + \|m/\rho\|_{L^\infty} \le \Gamma$ and $0 < M \le \rho$. Now since $M, \Gamma$ are fixed, let us recall $\eps_0 = \eps_0(M) > 0$, $\tilde C(\Gamma, M)$, and $0 < \alpha < 1$ as in the statement of Proposition \ref{prop:Linfty} are fixed.

First, let us note that for any $r > 0$, the rescaling $\tilde u(\tau, y) \coloneqq u(t + r\tau, x + ry)$ is a solution to \eqref{eqn:Euler} satisfying the same bounds. Moreover, for any fixed threshold $\Lambda > 0$, if
$$\frac{1}{r^2}\|[\lambda_1(u) - \Lambda\|_{L^1(B_{2r}(0))} = \|[\lambda_1(\tilde u) - \Lambda]_+\|_{L^1(B_2(0))} < \eps_0,$$ we can apply Proposition \ref{prop:Linfty} to $\tilde u$ and obtain
\begin{equation}\label{eqn:DeGiorgi_rescaled}
\begin{aligned}
\sup_{(\tau, y) \in B_r(t,x)} [\lambda_1(\tau,y) - \Lambda]_+ &= \sup_{(\tau, y) \in B_1(0)} [\lambda_1(t + r\tau , x + ry) - \Lambda]_+\\
	&\le \tilde C(\Gamma, M) \|\lambda_1(t + r\tau, x + ry) - \Lambda\|_{L^1_{\tau,y}(B_2(0))}^\alpha = \frac{\tilde C(\Gamma,M)}{r^{2\alpha}} \|\lambda_1 - \Lambda\|_{L^1(B_{2r}(t,x))}^\alpha.
\end{aligned}
\end{equation}

Second, let us show \eqref{eqn:convergence_trace} for $\lambda_1$. By Lemma \ref{lem:DeGiorgi_L1}, we have a sequence of scales $r_k \rightarrow 0^+$ such that for almost every $t$, the half-spaces $H_t^+ = \set{(\tau,y) \ \big | \ y \ge \tau\dot{h}(t)}$ are defined, $\set{y > 0, \tau = 0} \subset H_t^+$, and for which the blow-ups $u_k:= u(t + r_k\tau, h(t)+r_k y)$ satisfy $u_k \rightarrow u^\pm(t)$ in $L^1_{loc}(H^\pm_t)$. Using $\lambda_1$ is continuous on the state space, by a simple compactness argument implies $\lambda_1(u_k) \rightarrow \lambda_1(u^\pm(t))$ in $L^1_{loc}(H_t^\pm)$. In particular, for almost every $t \in [0,T]$,
\begin{equation}
\lim_{k\rightarrow \infty} \int_{B_2} \left[\min(\lambda_1^+(t),\lambda_1^-(t)) - \lambda_1(u_k)\right]_+ \;dyd\tau = 0.
\end{equation}
Since each $u_k$ is a rescaling of $u$, we use the above argument with $k$ sufficiently large to obtain
\begin{equation}
\begin{aligned}
\lim_{k\rightarrow \infty} \esssup_{(\tau,y)\in B_{r_k}(0)} [\min(\lambda_1^+(t),\lambda_1^-(t)) - &\lambda_1(u(t + \tau, h(t) + y))]_+\\
	&=  \lim_{k\rightarrow \infty} \esssup_{(\tau,y) \in B_1} [\min(\lambda_1^+(t),\lambda_1^-(t)) - \lambda_1(u_k(\tau,y))]_+\\
	&\le \lim_{k\rightarrow \infty} \tilde C(\Gamma,M)\|[\min(\lambda_1^+(t),\lambda_1^-(t)) - \lambda_1(u_k)]_+\|_{L^1(B_2)}^\alpha = 0.
\end{aligned}
\end{equation}
Finally, since the supremum over a descending sequence of sets is monotone, we obtain \eqref{eqn:convergence_trace}. Now, the proof is identical for $\lambda_2$.

Third, let us show \eqref{eqn:convergence} for $\lambda_1$ and some fixed $K^+$. Indeed, fix $r_n \rightarrow 0+$. Then, by Lemma \ref{lem:DeGiorgi_L1}, for any subsequence, there is a further subsequence denoted $r_{n_k}$ along which $u_k :=(u(t + r_{n_k}\tau, h(t) + r_{n_k}y))$ converges to $u^+(t)$ in $L^1_{loc}(H^+_t)$ for almost every $t$. In particular, we see $\|\lambda_1(u_k) - \lambda_1(u^+(t))\|_{L^1(K^+)} \rightarrow 0$ for almost every $t$.

Suppose $t_0$ is such a time and $K^+ = B_R(\tau_0,y_0) \subset B_{2R}(\tau_0, y_0)$ is compactly contained in $H^+_t$. Then, \eqref{eqn:DeGiorgi_rescaled} implies for $k$ sufficiently large
\begin{equation}\label{eqn:convergence2}
\sup_{(\tau, y) \in B_R(\tau_0,y_0)} [\lambda_1(u_k(\tau,y)) - \lambda_1(u^+(t_0))]_+ \le \frac{\tilde C(\Gamma,M)}{R^{2\alpha}} \|\lambda_1(u_k) - \lambda_1(u^+(t_0))\|_{L^1(B_{2R}(\tau_0,y_0))}^\alpha \rightarrow 0.
\end{equation}
On the other hand, if $K^+$ is a general compact subset, we can cover $K^+$ with balls $B_R(\tau_0,y_0)$ such that $B_{2R}(\tau_0,y_0) \subset H^+_{t_0}$, extract a finite subcover, and apply the above result for balls. Therefore, noting $\lambda_1$ is bounded and applying the Lebesgue dominated convergence theorem, we conclude that
$$\lim_{k\rightarrow 0^+}\int_0^T \esssup_{(\tau,y) \in K^+}\left[\lambda_1^+(t) - \lambda_1(t + r_{n_k}\tau, h(t) + r_{n_k}y)\right]_+ \;dt = 0.$$
Since the subsequence was arbitrary, we conclude that the limit holds along the entire sequence and hence
$$\lim_{r\rightarrow 0^+}\int_0^T \esssup_{(\tau,y) \in K^+}\left[\lambda_1^+(t) - \lambda_1(t + r\tau, h(t) + ry)\right]_+ \;dt = 0,$$
as desired. The proofs for left traces and $\lambda_2$ are identical. Finally, the proofs for $\rho$ follow by noting $\rho = \lambda_2 - \lambda_1$ almost everywhere on $\R^+\times \R$.
\end{proof}

\medskip

\noindent \underline {\bf {Step 3: Proof of Theorem \ref{thm:improved_traces}}}

\medskip

We are now ready to deduce Theorem \ref{thm:improved_traces} using Lemma \ref{lem:DeGiorgi}. 
Let us begin with \eqref{eqn:improved_traces2}. We prove the statement only for right traces and only for $\rho$ as the proof is identical in the other cases. Fix $a_n = \eps r_{n}$, $b_n = r_n$, and $C = \eps^{-1} > 1$ so that $b_n \le C a_n$ for all $n$. Also, fix $M$ such that $\|\dot{h}\|_{L^\infty} \le M$. Then, we first claim that for almost every $t\in [0,T]$, $K = [-\frac{1}{2M}, \frac{1}{2M}] \times [1, 2C]$ is compactly contained in $H_t^+$. Indeed, recalling $H_t^+$ is defined by the inequality $y \ge \dot{h}(t) \tau$, we see for $(\tau, y) \in K$, $|\dot{h}(t)\tau| \le \frac{1}{2} < 1 \le y$. 

Now, fix $F_n(t) = \esssup_{a_n < y < b_n} \left[\rho(t, h(t) + y) - \rho^+(t)\right]_+$. Then, from Lemma \ref{lem:DeGiorgi} of the preceding step we know that
\begin{equation}
\lim_{r\rightarrow 0^+} \int_0^T \esssup_{(\tau,y)\in K} \left[\rho(t + r\tau, h(t) + ry) - \rho^+(t)\right]_+ = 0,
\end{equation}
since $K \subset H_t^+$ for almost every $t$.
Let us call the inner quantity $G(r,t)$ so that $\int_0^T G(r,t)\;dt \rightarrow 0$ as $r\rightarrow 0^+$. We will estimate $F_n(t + \tau)$ by $G(c_n,t)$ for a suitable choice of $c_n\rightarrow 0^+$ to conclude.
Note, for $\tau \in [-\frac{a_n}{4M}, \frac{a_n}{4M}]$, we have the pointwise estimate
\begin{equation}\label{eqn:shifted_bound}
\begin{aligned}
F_n(t + \tau) &\le \esssup_{a_n < y < b_n} \left[\rho(t + \tau, h(t + \tau) + y) - \rho^+(t)\right]_+ + \left[\rho^+(t) - \rho^+(t + \tau)\right]_+\\
	&\le \esssup_{a_n - |\tau\dot{h}(t)| < y < b_n + |\tau\dot{h}(t)|} \left[\rho(t + \tau, h(t) + y) - \rho^+(t)\right]_+ + \left[\rho^+(t) - \rho^+(t + \tau)\right]_+\\
	&\le \esssup_{\frac{3}{4}a_n < y < b_n + \frac{1}{4}a_n} \left[\rho(t + \tau, h(t) + y) - \rho^+(t)\right]_+ + \left[\rho^+(t) - \rho^+(t + \tau)\right]_+\\
	&\le G(c_n,t) + \left[\rho^+(t) - \rho^+(t + \tau)\right]_+,
\end{aligned}
\end{equation}
provided $c_n$ is chosen so that
\begin{equation}
\left[-\frac{a_n}{4M},\frac{a_n}{4M}\right] \times \left[\frac{3}{4}a_n, b_n + \frac{1}{4}a_n\right] \subset \left[-\frac{c_n}{2M}, \frac{c_n}{2M}\right] \times \left[c_n, 2Cc_n\right],
\end{equation}
or equivalently, 
\begin{equation}
\max\left(\frac{1}{2}a_n, \frac{b_n + \frac{1}{4}a_n}{2C}\right) \le c_n \le \frac{3}{4}a_n,
\end{equation}
which admits a choice of $c_n$ since $b_n \le Ca_n$. Moreover, because $a_n \rightarrow 0$, $c_n \rightarrow 0$. Finally, averaging both sides of \eqref{eqn:shifted_bound} for $\tau \in [-\delta,\delta]$, for $\delta < \frac{1}{4Ma_n}$ and taking $\delta \rightarrow 0^+$, we see that
\begin{equation}
F_n(t) \le G(c_n,t),
\end{equation}
provided $t$ is a Lebesgue point of each $F_n$ and also $\rho^+$. Finally, integrating in $t$ and taking $n \rightarrow \infty$ yields $\int_0^T F_n(t)\;dt \rightarrow 0$, as desired.

\medskip

Second, let us prove \eqref{eqn:improved_traces1} for $\lambda_1$ via a similar argument. Fix a sequence $a_n \rightarrow 0^+$. Then, take $F_n(t) = \esssup_{-a_n < y < a_n} \left[\min(\lambda_1^+(t),\lambda_1^-(t) - \lambda_1(t, h(t) + y)\right]_+$ and
$$G(r,t) = \esssup_{(\tau,y)\in [-1,1]^2} \left[\min(\lambda_1^+(t),\lambda_1^-(t)) - \lambda_1(t + r\tau, h(t) + ry)\right]_+.$$
Also, let $H(t) = \min(\lambda_1^+(t),\lambda_1^-(t))$. As before, we estimate $F_n(t + \tau)$ pointwise for $\tau \in [-\frac{a_n}{M}, \frac{a_n}{M}]$ as
\begin{equation}\label{eqn:shifted_bound2}
\begin{aligned}
F_n(t + \tau) &\le \esssup_{-a_n < y < a_n} \left[H(t) - \lambda_1(t + \tau, h(t + \tau) + y)\right]_+ + \left[H(t+\tau) - H(t)\right]_+\\
	&\le \esssup_{-a_n - |\tau\dot{h}(t)| < y < a_n + |\tau\dot{h}(t)|} \left[H(t) - \lambda_1(t + \tau, h(t) + y)\right]_+ + \left[H(t + \tau) - H(t)\right]_+\\
	&\le \esssup_{-2a_n < y < 2a_n} \left[H(t) - \lambda_1(t + \tau, h(t) + y)\right]_+ + \left[H(t + \tau) - H(t)\right]_+\\
	&\le G(c_n,t) + \left[H(t + \tau) - H(t)\right]_+,
\end{aligned}
\end{equation}
where now we pick $c_n = \max\left(2a_n, \frac{a_n}{M}\right)$. Averaging \eqref{eqn:shifted_bound2} over $\tau \in [-\delta,\delta]$ for $\delta$ sufficiently small and sending $\delta \rightarrow 0^+$ as before yields
$$F_n(t) \le G(c_n, t),$$
for almost every $t$. Since $c_n \rightarrow 0^+$, we conclude using $G(c_n, t) \rightarrow 0$ pointwise in $t$.


\section{Appendix}\label{sec:appendix}

Here we prove the compactness lemma originally found as \cite[Lemma 1.1]{vasseur_kinetic}:
\begin{proof}[Proof of Lemma \ref{lem:compactness}]
We begin by denoting $\rho_k = \int_{-L}^L f_k \;dv$ and $m_k = \int_{-L}^L f_k\;dv$ so that $\rho_k \rightarrow \rho$ and $m_k \rightarrow m$ in $L^1_{loc}$. Thus, after extracting a subsequence, we have $m_{n_k} \rightarrow m$ and $\rho_{n_k}\rightarrow \rho$ converge pointwise almost everywhere. However, because $f_{n_k}$ is the characteristic function of an interval, the $0$-th and $1$-st moments (in $v$) of $f_{n_k}$ completely determine $f_{n_k}$ as
\begin{equation}
f_{n_k}(t,x,v) = \chi_{\left[\frac{m_{n_k}}{\rho_{n_k}}-\frac{\rho_{n_k}}{2}, \frac{m_{n_k}}{\rho_{n_k}}+\frac{\rho_{n_k}}{2}\right]}(v).
\end{equation}
Therefore, the pointwise convergence is sufficient to conclude
\begin{equation}
f_{n_k} \rightarrow \chi_{\left[\frac{m}{\rho}-\frac{\rho}{2}, \frac{m}{\rho}+\frac{\rho}{2}\right]}(v) := g(t,x,v).
\end{equation}
By Lebesgue dominated convergence, we conclude $f_{n_k} \rightarrow g$ in $L^1_{loc}$. Now, since for any subsequence of $f_n$, there is a further subsequence which converges to $g$ in $L^1_{loc}$, we conclude that the entire family $\{f_n\}$ converges to $g$ in $L^1_{loc}$. Finally, because we also have $f_n \weakstar f$ in $L^\infty$, we conclude $f = g$ and we have demonstrated explicitly that $f$ is a characteristic function of an interval (in $v$).
\end{proof}

Here we provide a proof to \ref{cor:Dafermos}. In fact, the result applies to a general class of fluxes provided that the solutions considered satisfy the strong trace property of Definition \ref{defn:strong_traces}. The result in the case of $BV$ solutions is originally due to Dafermos, but we refer to \cite{serre_vasseur} for a proof. This proof follows that of \cite[Lemma 6]{leger_vasseur}:
\begin{proof}[Proof of Corollary \ref{cor:Dafermos}]
Fix $u$ a solution to \eqref{eqn:Euler}, $f$ the corresponding flux of \eqref{eqn:Euler}, and $h:[0,T] \rightarrow \R$ a Lipschitz curve. Pick $\psi(x) \in C^\infty$ with $0 \le \psi \le 1$ and $\supp(\psi)\subset (0,1)$ and let $\psi_{\eps}(x)$ be the standard mollifier, $\psi_{\eps}(x) = \eps^{-1}\psi\left(\frac{x}{\eps}\right)$. Next, we pick $\Psi_{\eps}(x) = \int_x^\infty \psi_{\eps}(y) - \psi_{\eps}(-y)$. Then, $\Psi_{\eps}$ is smooth, compactly supported and by Fubini, converges to $0$ in $L^1(\R)$. Moreover, for any $\varphi \in C^\infty_c([0,T])$ and any $v:\R^+\times \R \rightarrow \R^m$ strong trace in the sense of Definition \ref{defn:strong_traces},
\begin{equation}\label{eqn:Dafermos1}
\begin{aligned}
\int_0^\infty\int_{\R} \varphi(t)\Psi_{\eps}^\prime(x - h(t))v(t,x) \ dx dt &= \int_0^\infty\int_{\R} \varphi(t)\psi_{\eps}(x) \left[v(t,h(t) + x) - v(t,h(t) - x)\right] \ dxdt\\
	&= \int_0^\infty\int_{\R} \varphi(t)\psi(x) \left[v(t,h(t) + \eps x) - v(t,h(t) - \eps x)\right] \ dxdt.
\end{aligned}
\end{equation}
Finally, the right hand side converges to $\int_0^\infty \left[v^+(t) - v^-(t)\right]\varphi(t) \ dt$ by the strong trace property. Therefore, testing the weak form of \eqref{eqn:Euler} with $\varphi(t)\Psi(x - h(t))$, to obtain
\begin{equation}
\begin{aligned}
0 &=-\int_0^\infty\int_{\R} \partial_t \varphi(t)\Psi_{\eps}(x-h(t))u(t,x) \;dxdt\\
	&\qquad+ \int_0^\infty\int_{\R}\varphi(t)\Psi_{\eps}^\prime(x-h(t))\dot{h}(t) \;dxdt\\
	&\qquad- \int_0^\infty\int_{\R}  \varphi(t)\Psi_{\eps}^\prime(x-h(t))f(u(t,x)) \;dxdt\\
	&:= I_1 + I_2 + I_3.
\end{aligned}
\end{equation}
We note first that because $\Psi_{\eps} \rightarrow 0$ in $L^1$, $I_1 \rightarrow 0$. Second, because $h$ is Lipschitz, $\dot{h}(t)u(t,x)$ is strong trace in the sense of Definition \ref{defn:strong_traces} with traces $\dot{h}(t)u^\pm(t)$. Therefore, \eqref{eqn:Dafermos1} implies
\begin{equation}
\lim_{\eps\rightarrow 0^+} I_2 = \int_0^T \varphi(t)\dot{h}(t)\left[u^+(t) - u^-(t)\right] \;dt.
\end{equation}
Third, because $f$ is continuous, $f(u)$ is strong trace in the sense of Definition \ref{defn:strong_traces} by a simple compactness argument using the Lebesgue dominated convergence theorem. Then, \eqref{eqn:Dafermos1} implies
\begin{equation}
\lim_{\eps\rightarrow 0^+} I_3 = \int_0^T \varphi(t)\left[f(u^+(t)) - f(u^-(t))\right] \;dt.
\end{equation}
Using the fundamental lemma of calculus of variations, we conclude the Rankine-Hugoniot jump condition,
\begin{equation}
f(u^+(t)) - f(u^-(t)) = \dot{h}(t)\left[u^+(t) - u^-(t)\right] \qquad \text{for almost every }t\in [0,T].
\end{equation}
A similar argument for a continuous entropy, entropy-flux pair $(\eta,q)$, yields the entropy inequality
\begin{equation}
q(u^+(t)) - q(u^-(t)) \le  \dot{h}(t)\left[\eta(u^+(t)) - \eta(u^-(t))\right] \qquad \text{for almost every }t\in [0,T].
\end{equation}
We therefore conclude for almost every $t$, either $u^+(t) = u^-(t)$ or $(u^-(t),u^+(t),\dot{h}(t))$ is an entropic shock. 
\end{proof}

\bibliographystyle{amsplain}
\bibliography{Full_Draft}

\end{document}